\documentclass{scrartcl}

\usepackage{microtype}

% we added some things
\addtokomafont{sectioning}{\normalfont\bfseries}
\setkomafont{title}{\normalfont\LARGE}
\setkomafont{subtitle}{\normalfont\Large}
\addtokomafont{pageheadfoot}{\scshape\small}

\usepackage[draft]{fixme}
\usepackage{pdfsync} % zum Arbeiten mit Sumatra-PDF-Viewer (aktualisiert geöffnetes Dokument)
\usepackage[english]{babel} % deutsche Übersetzung "Contents" -> "Inhaltsverzeichnis" etc.
\usepackage[utf8]{inputenc}
\usepackage[T1]{fontenc} % auch Sonderzeichensupport
\usepackage[usenames,dvipsnames]{xcolor} % Verwendung von Farben
\usepackage{rotfloat} % Pakete float und rotating
\usepackage{pdfpages} % Einbinden externer PDFs
\usepackage{graphicx} % Einbinden externer Grafiken
\usepackage[usenames,dvipsnames]{xcolor}
\usepackage[format=plain, indention=0.2cm]{caption}
\usepackage{url} % Einbinden von URLs durch \url{...}
\usepackage{booktabs} % \toprule \midrule \bottomrule und mehr für ansprechendere Tabellen
\usepackage{textcomp} % mehr verfügbare Symbole
\usepackage{marvosym} % mehr verfügbare Symbole
\usepackage{multirow} % erweiterte Tabellenfunktionalität
\usepackage{makecell} % erweiterte Tabellenfunktionalität
\usepackage[]{algorithm2e}
\usepackage{amsfonts, amsmath, amsthm, amssymb, stmaryrd}
\usepackage{mathrsfs}
\usepackage{upgreek}
\usepackage{todonotes}
\usepackage{nicefrac}
\usepackage{txfonts}
\usepackage{csquotes}

\DeclareMathOperator{\supp}{supp}
\DeclareMathOperator{\Span}{span}

\DeclareMathOperator{\divergence}{div}

\newtheorem{definition}{Definition}
\theoremstyle{plain}
\newtheorem{theorem}[definition]{Theorem}
\theoremstyle{plain}
\newtheorem{remark}[definition]{Remark}
\theoremstyle{plain}
\newtheorem{lemma}[definition]{Lemma}
\theoremstyle{plain}

% Include your additional packages here
% \usepackage{graphicx}
\usepackage{bbold}

\title{Variational discretization approach applied to an optimal control problem with bounded measure controls}
\author{Evelyn Herberg\footnote{Mathematisches Institut, Universität Koblenz-Landau, Campus Koblenz, Universitätsstraße 1, 56070 Koblenz, Germany.}$\,$ , Michael Hinze$^*$}
\date{August 17, 2020}

\begin{document}
	
	\maketitle
	
	\textbf{Abstract.} We consider a parabolic optimal control problem with an initial measure control. The cost functional consists of a tracking term corresponding to the observation of the state at final time. Instead of a regularization term in the cost functional, we follow \cite{CK19} and consider a bound on the measure norm of the initial control. The variational discretization of the problem together with the optimality conditions induce maximal discrete sparsity of the initial control, i.e. Dirac measures in space. We present numerical experiments to illustrate our approach.\\
	
	\textbf{Keywords.} variational discretization, optimal control, sparsity, partial differential equations, measures \\
	
\section{Introduction}
\label{sec:intro}
We consider the following optimal control problem which was analyzed in \cite{CK19}:

\begin{equation}
\min_{u \in U_{\alpha}} J(u) = \frac{1}{2} \| y_u(T) - y_d\|^2_{L^2(\Omega)}.
\tag{$P_{\alpha}$}
\label{eq:Palpha}
\end{equation}
Here let $y_d \in L^2(\Omega)$, and $U_{\alpha} := \{u \in \mathcal{M}(\bar{\Omega}) : \|u\|_{\mathcal{M}(\bar{\Omega})} \leq \alpha \}$, where $\mathcal{M}(\bar{\Omega})$ denotes the space of regular Borel measures on $\bar{\Omega}$ equipped with the norm
\begin{equation*}
\|u\|_{\mathcal{M}(\bar{\Omega})} := \sup_{\|\phi\|_{C(\bar{\Omega})} \leq 1 } \int_{\bar{\Omega}} \phi(x) \, d  u(x) = |u|(\bar{\Omega}).
\end{equation*}
The state $y_u$ solves the parabolic equation 
\begin{equation}
\begin{cases}
\partial_t y_u + A y_u &= f, \qquad \text{in} \; Q = \Omega \times (0,T),\\
y_u(x,0) &=u, \qquad \text{in} \; \bar{\Omega}, \\
\partial_n y_u(x,t) &= 0, \qquad \text{on} \; \Sigma = \Gamma \times (0,T),
\end{cases}
\label{eq:PDE}
\end{equation}
where $f \in L^1(0,T; L^2(\Omega))$ is given, $\Omega \subset \mathbb{R}^n (n =1,2,3)$ denotes an open, connected and bounded set with Lipschitz boundary $\Gamma$, and $A$ is the elliptic operator defined by 
\begin{equation}
A y_u := -a \Delta y_u + b(x,t) \cdot \nabla y_u + c(x,t) y_u,
\label{eq:operatorA}
\end{equation}
with a constant $a >0$ and functions $b \in L^{\infty}(Q)^n$ and $c\in L^{\infty}(Q)$. 

The state is supposed to solve \eqref{eq:PDE} in the following very weak sense, see e.g. \cite[Definition 2.1]{CK19}:
\begin{definition} 
	We say that a function $y \in L^1(Q)$ is a solution of \eqref{eq:PDE} if the following identity holds:
	\begin{equation} \label{eq:stateeq}
	\int_Q (-\partial_t \phi + A^* \phi) y \, dx  dt = \int_Q f \phi \, dx dt + \int_{\bar{\Omega}} \phi(0) \, d u  \quad \forall \, \phi \in \Phi,
	\end{equation}	
	where 
	\begin{equation*}
	\Phi := \{ \phi \in L^2(0,T;H^1(\Omega)) : -\partial_t \phi + A^* \phi \in L^{\infty}(Q), \partial_n \phi = 0 ~\textrm{on}~ \Sigma,\phi(T) =0 \in \Omega \}
	\end{equation*}
	and $A^* \bar{\varphi} := -a \Delta\bar{\varphi} - \divergence [b(x,t)\bar{\varphi} ] + c \bar{\varphi}  $ denotes the adjoint operator of $A$. 
\end{definition}

The existence and uniqueness of solutions to the state equation \eqref{eq:PDE} and problem \eqref{eq:Palpha} have been established in \cite[Theorem 2.2 and Theorem 2.4]{CK19}.

Optimal control with a bound on the total variation norm of the measure-control is inspired by applications, which aim at identifying pollution sources, see, e.g. \cite{EHH2005,LOT2014}. These problems inherit a sparsity structure (see, e.g., \cite{Gong,GongH,Stadler}), which we can retain in practical implementation by applying variational discretization, from \cite{VD} with a suitable Petrov-Galerkin approximation of the state equation \eqref{eq:stateeq}, compare \cite{HerbergHS}. 

Let us briefly comment on related contributions in the literature. In \cite{HerbergHS} the variational discrete approach is applied to an optimal control problem with parabolic partial differential equation and space-time measure control from \cite{CasKun}. Control of elliptic partial differential equations with measure controls is considered in \cite{sparseFEM,duality,CS2017,PV} and control of parabolic partial differential equations with measure controls can be found in \cite{CasasClasonKunisch,CasKun,CVZ,KPV,LVW2019}. The novelty of the problem discussed in this work, lies in constraining the control set, instead of incorporating a penalty term for the control in the target functional.

The plan of the paper is as follows: We analyze the continuous problem, its sparsity structure and the special case of positive controls in Section \ref{sec:cont}. Thereafter we apply variational discretization to the optimal control problem in Section \ref{sec:VD}. Finally in Section \ref{sec:Num} we apply the semismooth Newton method to the optimal control problem with positive controls (Subsection \ref{subsec:pos}) and to the original optimal control problem (Subsection \ref{subsec:gen}). For the latter we add a penalty term before applying the semismooth Newton method. For both cases we provide numerical examples.

\section{Continuous optimality system}
\label{sec:cont}

In this section we summarize properties of \eqref{eq:Palpha}, which have been established in \cite{CK19}.

Let $\bar{u}$ be the unique solution of \eqref{eq:Palpha} with associated state $\bar{y}$. We then say that \newline$\bar{\varphi}\in L^2(0,T;H^1(\Omega)) \cap C\left(\bar{\Omega} \times [0,T]\right)$ is the associated adjoint state of $\bar{u}$, if it solves 
\begin{equation}
\begin{cases}
-\partial_t \bar{\varphi} + A^* \bar{\varphi} &= 0, \qquad\qquad\qquad\, \text{in} \; Q,\\
\bar{\varphi}(x,T) &=\bar{y}(x,T) - y_d , \quad\;\, \text{in} \; \Omega, \\
\partial_n \bar{\varphi}(x,t) &= 0, \qquad\qquad\qquad\, \text{on} \; \Sigma .
\end{cases}
\label{eq:adjointPDE}
\end{equation}
We recall the optimality conditions for \eqref{eq:Palpha} from \cite[Theorem 2.5]{CK19}:
\begin{theorem} \label{thm:optcond}
	% corresponds to 
	Let $\bar{u}$ be the solution of \eqref{eq:Palpha} with $\bar{y}$ and $\bar{\varphi}$ the associated state and adjoint state, respectively. Then, the following properties hold
	\begin{enumerate}
		\item If $\|\bar{u}\|_{\mathcal{M}(\bar{\Omega})} < \alpha$, then $\bar{y}(T) = y_d$ and $\bar{\varphi} = 0 \in Q$.
		\item If $\|\bar{u}\|_{\mathcal{M}(\bar{\Omega})} = \alpha$, then
		\begin{align*}
		\supp(\bar{u}^+) &\subset \{ x \in \bar{\Omega} : \bar{\varphi}(x,0) = - \|\bar{\varphi}(0)\|_{C(\bar{\Omega})} \}, \\
		\supp(\bar{u}^-) &\subset \{ x \in \bar{\Omega} : \bar{\varphi}(x,0) = + \|\bar{\varphi}(0)\|_{C(\bar{\Omega})}\},
		\end{align*}
		where $\bar{u} = \bar{u}^+ - \bar{u}^- $ is the Jordan decomposition of $\bar{u}$. 
	\end{enumerate} 
	Conversely, if $\bar{u}$ is an element of $U_{\alpha}$ satisfying 1. or 2., then $\bar{u}$ is the solution to \eqref{eq:Palpha}.\\
\end{theorem}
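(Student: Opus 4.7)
The plan is to reduce everything to a variational inequality for the convex reduced functional $J$ and then read off the two cases depending on whether $\bar{u}$ lies in the interior or on the boundary of $U_{\alpha}$.

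First I would derive an adjoint representation of $J'(\bar{u})$. Since the control-to-state map $u\mapsto y_u$ is affine and $J$ is quadratic, $J$ is convex. Differentiating $J(u)=\tfrac12\|y_u(T)-y_d\|^2_{L^2(\Omega)}$ in a direction $v\in\mathcal{M}(\bar{\Omega})$ and testing the adjoint equation \eqref{eq:adjointPDE} against the increment $z_v:=y_{\bar{u}+v}-y_{\bar{u}}$ (which solves \eqref{eq:PDE} with vanishing right-hand side and initial measure $v$), integration by parts in $Q$ yields
\begin{equation*}
J'(\bar{u})\,v \;=\; \int_{\Omega}(\bar{y}(T)-y_d)\,z_v(T)\,dx \;=\; \int_{\bar{\Omega}} \bar{\varphi}(x,0)\,dv(x).
\end{equation*}
Hence $J'(\bar{u})$ acts on measures through the continuous function $\bar{\varphi}(\cdot,0)\in C(\bar{\Omega})$, and testing against Dirac masses shows that $J'(\bar{u})=0$ is equivalent to $\bar{\varphi}(\cdot,0)\equiv 0$.

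If $\|\bar{u}\|_{\mathcal{M}(\bar{\Omega})}<\alpha$, then $\bar{u}$ is an interior point of $U_{\alpha}$ and the first-order necessary condition reduces to $J'(\bar{u})=0$, hence $\bar{\varphi}(\cdot,0)\equiv 0$. Invoking backward uniqueness for the backward parabolic equation satisfied by $\bar{\varphi}$---equivalently, uniqueness across the terminal time for the time-reversed forward parabolic problem---one concludes $\bar{\varphi}\equiv 0$ in $Q$ and in particular $\bar{y}(T)=y_d$. This backward uniqueness step is the main non-trivial tool needed; for operators of the form \eqref{eq:operatorA} with $L^{\infty}$ lower-order coefficients it is a classical Lions--Malgrange type result.

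If $\|\bar{u}\|_{\mathcal{M}(\bar{\Omega})}=\alpha$, optimality gives the variational inequality $\int_{\bar{\Omega}}\bar{\varphi}(x,0)\,d(u-\bar{u})\geq 0$ for every $u\in U_{\alpha}$. Minimizing the left-hand side over $u\in U_{\alpha}$ produces the value $-\alpha\|\bar{\varphi}(0)\|_{C(\bar{\Omega})}$, whereas the trivial bound gives $\int\bar{\varphi}(0)\,d\bar{u}\geq -\alpha\|\bar{\varphi}(0)\|_{C(\bar{\Omega})}$; consequently
\begin{equation*}
\int_{\bar{\Omega}} \bar{\varphi}(x,0)\,d\bar{u}(x)\;=\;-\alpha\,\|\bar{\varphi}(0)\|_{C(\bar{\Omega})}.
\end{equation*}
Splitting via the Jordan decomposition $\bar{u}=\bar{u}^+-\bar{u}^-$ and using $\bar{u}^+(\bar{\Omega})+\bar{u}^-(\bar{\Omega})=\alpha$ forces equality in each of the bounds $\int\bar{\varphi}(0)\,d\bar{u}^{\pm}\geq -\|\bar{\varphi}(0)\|_{C(\bar{\Omega})}\bar{u}^{\pm}(\bar{\Omega})$, and each such equality is equivalent to the stated inclusion $\supp(\bar{u}^{\pm})\subset\{\bar{\varphi}(x,0)=\mp\|\bar{\varphi}(0)\|_{C(\bar{\Omega})}\}$.

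For the converse, convexity of $J$ reduces optimality of $\bar{u}$ to the variational inequality $J'(\bar{u})(u-\bar{u})\geq 0$ for every $u\in U_{\alpha}$. In case~1 this is immediate since $\bar{\varphi}(\cdot,0)\equiv 0$. In case~2 the support inclusions together with $|\bar{u}|(\bar{\Omega})=\alpha$ give $\int\bar{\varphi}(0)\,d\bar{u}=-\alpha\|\bar{\varphi}(0)\|_{C(\bar{\Omega})}$, which equals $\inf_{u\in U_{\alpha}}\int\bar{\varphi}(0)\,du$; hence the variational inequality holds and $\bar{u}$ is optimal by convexity of $J$.
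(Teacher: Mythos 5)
Your proposal is correct in substance and, for part~2 and the converse, follows the same route as the paper's treatment of the discrete analogue (Theorem~\ref{thm:optconddisc}): derive the variational inequality $J'(\bar u)(u-\bar u)=\int_{\bar\Omega}\bar\varphi(x,0)\,d(u-\bar u)\ge 0$, identify $\inf_{u\in U_\alpha}\int\bar\varphi(0)\,du=-\alpha\|\bar\varphi(0)\|_{C(\bar\Omega)}$, and split the resulting equality along the Jordan decomposition. (The paper does not reprove the continuous statement at all; it is quoted from \cite[Theorem 2.5]{CK19}, so the only in-paper comparison is with the discrete proof.) Where you genuinely diverge is part~1. The paper argues on the primal side: if $\bar y(T)\neq y_d$, reachability of $y_d$ (Lemma~\ref{lem:2.7} in the discrete case, approximate controllability in the continuous one) yields a control $u\notin U_\alpha$ with smaller cost, and since $\|\bar u\|_{\mathcal M(\bar\Omega)}<\alpha$ the convex combination $\bar u+\lambda(u-\bar u)$ stays admissible for small $\lambda$, contradicting optimality; $\bar\varphi=0$ then follows from the terminal condition. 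You argue on the dual side: interior stationarity gives $\bar\varphi(\cdot,0)\equiv 0$, and backward uniqueness propagates this to $\bar\varphi\equiv 0$ in $Q$, whence $\bar y(T)=y_d$. These are dual formulations of the same unique-continuation property, so neither is more elementary, but two points in your version hide real work: (i) Lions--Malgrange requires time-regular coefficients, so for merely $b\in L^\infty(Q)^n$, $c\in L^\infty(Q)$ you must invoke a logarithmic-convexity or Carleman-type backward-uniqueness result; and (ii) the integration by parts giving $J'(\bar u)v=\int_{\bar\Omega}\bar\varphi(x,0)\,dv$ cannot be performed directly in the very weak formulation \eqref{eq:stateeq}, because $\bar\varphi(T)=\bar y(T)-y_d\neq 0$ excludes $\bar\varphi$ from the test space $\Phi$; a regularity-plus-limiting argument as in \cite{CK19} is needed. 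Neither point invalidates the proof, but both should be stated rather than absorbed into ``integration by parts yields.''
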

%
%%% TODO Fenchel predual?
%
In some applications we may have a priori knowledge about the measure controls. This motivates the restriction of the admissible control set to positive controls $U^+_{\alpha} := \{u \in \mathcal{M}^+(\bar{\Omega}) : \|u\|_{\mathcal{M}(\bar{\Omega})} \leq \alpha \}$, with $\|u\|_{\mathcal{M}(\bar{\Omega})}  = u (\bar{\Omega})$. We then consider the problem
\begin{equation}
\min_{u \in U^+_{\alpha}} J(u) = \frac{1}{2} \| y_u(T) - y_d\|^2_{L^2(\Omega)},
\tag{$P^+_{\alpha}$}
\label{eq:Palphaplus}
\end{equation}
where $y_u$ solves \eqref{eq:PDE}.
The properties of \eqref{eq:Palphaplus} have been derived in \cite[Theorem 3.1]{CK19}:

\begin{theorem} \label{thm:optcondplus}
	\eqref{eq:Palphaplus} has a unique solution. Let $\bar{u}$ be the unique solution of \eqref{eq:Palphaplus} with associated adjoint state $\bar{\varphi}$. Then, $\bar{u}$ is a solution of \eqref{eq:Palphaplus} if and only if 
	\begin{equation}
	\int_{\bar{\Omega}} \bar{\varphi}(x,0) \, d \bar{u} \leq \int_{\bar{\Omega}} \bar{\varphi}(x,0) \, d u \qquad \forall \, u \in U^+_{\alpha}.
	\label{eq:intineq} 
	\end{equation}
	If $u(\bar{\Omega}) = \alpha$ the following properties are fulfilled:
	\begin{enumerate}
		\item Inequality \eqref{eq:intineq} is equivalent to the identity  
		\begin{equation}
		\int_{\bar{\Omega}} \bar{\varphi}(x,0) \, d \bar{u} = \alpha \bar{\lambda} := \alpha \min_{x \in \bar{\Omega}} \bar{\varphi}(x,0) ,
		\label{eq:lambdabar}
		\end{equation}
		where $\bar{\lambda} \leq 0$.
		\item $\bar{u}$ is the solution of \eqref{eq:Palphaplus} if and only if
		\begin{equation}
		\supp(\bar u) \subset \{ x \in \bar{\Omega} : \bar{\varphi}(x,0) = \bar{\lambda} \}.
		\label{eq:supppos}
		\end{equation} 
	\end{enumerate}
\end{theorem}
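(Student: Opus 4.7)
The plan is to proceed in three stages: first, establish existence and uniqueness and reduce optimality to the variational inequality \eqref{eq:intineq}; next, under the active constraint $\bar u(\bar\Omega)=\alpha$, rewrite that inequality as the identity \eqref{eq:lambdabar}; and finally translate the identity into the support condition \eqref{eq:supppos}. For existence I would observe that $U^+_\alpha$ is weak-$*$ compact in $\mathcal M(\bar\Omega)$ by Banach--Alaoglu combined with the weak-$*$ closedness of the non-negative cone, and that $u\mapsto y_u(T)$ is weak-$*$-to-strong continuous into $L^2(\Omega)$ by standard estimates for the very weak solution \eqref{eq:stateeq}, so $J$ is weak-$*$ lower semicontinuous and attains its minimum on $U^+_\alpha$. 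Uniqueness then follows because $J$ is strictly convex in $y_u(T)$ while the affine map $u\mapsto y_u(T)$ is injective on $\mathcal M(\bar\Omega)$ by backward uniqueness for the parabolic operator $A$. Since $J$ is convex and Gateaux differentiable, $\bar u$ solves \eqref{eq:Palphaplus} if and only if $\langle DJ(\bar u),\,u-\bar u\rangle\ge 0$ for all $u\in U^+_\alpha$. Plugging the sensitivity $z_h$ (solution of \eqref{eq:PDE} with $f=0$ and initial datum $h$) into \eqref{eq:stateeq} with test function $\bar\varphi$ yields $DJ(\bar u)\cdot h=\int_{\bar\Omega}\bar\varphi(x,0)\,dh(x)$, which is exactly \eqref{eq:intineq}.

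Assuming now $\bar u(\bar\Omega)=\alpha$, I would use $\bar\varphi(\cdot,0)\in C(\bar\Omega)$ to fix a minimiser $x^*\in\bar\Omega$ with $\bar\lambda=\bar\varphi(x^*,0)$. Testing \eqref{eq:intineq} against the admissible measures $u=0$ and $u=\alpha\delta_{x^*}$ yields $\bar\lambda\le 0$ and $\int_{\bar\Omega}\bar\varphi(\cdot,0)\,d\bar u\le\alpha\bar\lambda$, respectively. The reverse estimate comes from $\bar\varphi(\cdot,0)\ge\bar\lambda$ together with $\bar u\ge 0$ and $\bar u(\bar\Omega)=\alpha$, giving \eqref{eq:lambdabar}. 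For the converse direction of Item 1, for any $u\in U^+_\alpha$ one has $\int_{\bar\Omega}\bar\varphi(\cdot,0)\,du\ge\bar\lambda\,u(\bar\Omega)\ge\alpha\bar\lambda=\int_{\bar\Omega}\bar\varphi(\cdot,0)\,d\bar u$, where $\bar\lambda\le 0$ and $u(\bar\Omega)\le\alpha$ are used, which recovers \eqref{eq:intineq}.

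For Item 2, I would rewrite \eqref{eq:lambdabar} together with $\bar u(\bar\Omega)=\alpha$ as $\int_{\bar\Omega}(\bar\varphi(\cdot,0)-\bar\lambda)\,d\bar u=0$; since the integrand is continuous and non-negative and $\bar u$ is a non-negative Borel measure, this forces the inclusion \eqref{eq:supppos}. Conversely, \eqref{eq:supppos} implies $\int_{\bar\Omega}\bar\varphi(\cdot,0)\,d\bar u=\bar\lambda\,\bar u(\bar\Omega)=\alpha\bar\lambda$, and Item 1 then yields optimality via \eqref{eq:intineq}. The main technical hurdle will be justifying the adjoint-based gradient formula inside the very weak framework: admitting $h\in\mathcal M(\bar\Omega)$ as a perturbation forces one to exploit the regularity $\bar\varphi\in C(\bar\Omega\times[0,T])$ in order to make sense of the boundary term $\int_{\bar\Omega}\bar\varphi(x,0)\,dh(x)$, and uniqueness is likewise delicate, as it ultimately hinges on backward uniqueness for \eqref{eq:PDE} with measure initial data. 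The remaining pieces reduce to elementary convex analysis and support arguments for non-negative measures.
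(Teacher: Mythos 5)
Your proposal is essentially correct, but note that the paper does not prove this theorem at all: it is quoted verbatim from \cite[Theorem~3.1]{CK19}, and the closest in-paper argument is the proof of the discrete analogue, Theorem~\ref{thm:optconddiscplus}. Your outline matches that proof's skeleton --- derive the variational inequality from the adjoint representation of $J'(\bar u)(u-\bar u)$, test with $u=0$ and $u=\alpha\delta_{x^*}$ to obtain $\bar\lambda\le 0$ and the identity \eqref{eq:lambdabar}, and then pass to the support condition --- but you handle Item~2 differently: you observe that \eqref{eq:lambdabar} is equivalent to $\int_{\bar\Omega}(\bar\varphi(\cdot,0)-\bar\lambda)\,d\bar u=0$ with a continuous non-negative integrand against a non-negative measure, which immediately forces $\supp\bar u\subset\{\bar\varphi(\cdot,0)=\bar\lambda\}$. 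The paper instead splits into the cases $\bar\lambda=0$ and $\bar\lambda<0$, introduces $\psi(x)=-\min\{\bar\varphi(x,0),0\}$, and invokes \cite[Lemma~3.4]{CasasClasonKunisch}; your route is more elementary and exploits positivity of the control directly, whereas the paper's device is the one that also works for signed measures (as in Theorem~\ref{thm:optconddisc}). The two genuinely delicate points you correctly flag --- the adjoint-based gradient formula in the very weak framework and the injectivity of $u\mapsto y_u(T)$ via backward uniqueness, on which uniqueness of the minimizer rests --- are precisely the ingredients the paper outsources to \cite[Theorems~2.2--2.4]{CK19}, so you would either have to cite them as well or supply the corresponding regularity and unique-continuation arguments to make the proof self-contained.
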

We also repeat the following remark from \cite[Remark 3.3]{CK19}:
\begin{remark}
	\label{remark}
	While in Theorem \ref{thm:optcond}, we have $\bar{y}(T) = y_d$ and $\bar \varphi = 0 \in Q$ for an optimal control $\bar u$ with $\bar u(\bar{\Omega}) < \alpha$, this case is not a part of Theorem \ref{thm:optcondplus}. For non-negative controls we can show that if $y_d \leq y_0(T) $, where by $y_0$ we denote the solution of \eqref{eq:PDE} corresponding to the control $u = 0$, then the unique solution to \eqref{eq:Palphaplus} is given by $\bar{u} = 0$. So even though $\bar u(\bar{\Omega}) = 0 < \alpha$, we have $\bar{y}(T) \neq y_d$ and consequently $\bar \varphi \neq 0 \in Q$. 
\end{remark}

\section{Variational discretization}
\label{sec:VD}

To discretize problems \eqref{eq:Palpha}, \eqref{eq:Palphaplus} we define the space-time grid as follows: Define the partition $0 =t_0 < t_1 <\ldots <t_{N_{\tau}} =T$. For the temporal grid the interval $I$ is split into subintervals $I_k = \left( t_{k-1},t_k \right]$ for $k=1,\ldots,N_{\tau}$. The temporal gridsize is denoted by $\tau = \max_{0\leq k \leq N_{\tau}} {\tau_k} $, where $\tau_k := t_{k}-t_{k-1}$. We assume that $\{I_k\}_k$ and $\{\mathcal K_h\}_h$ are quasi-uniform sequences of time grids and triangulations, respectively. For $K \in \mathcal K_h$ we denote by $\rho(K)$ the diameter of $K$, and $h:= \max_{K\in \mathcal K_h}\rho(K)$.
We set $\bar{\Omega}_h= \bigcup_{K \in \mathcal{K}_h}K$ and denote by $\Omega_h$ the interior and by $\varGamma_h$ the boundary of $\bar{\Omega}_h$. We assume that vertices on $\varGamma_h$ are points on $\varGamma$. We then set up the space-time grid as $Q_h := \Omega_h \times (0,T)$.

We define the discrete spaces: 
\begin{align}
Y_h &:= \Span \{ \phi_j :   1 \leq j \leq N_h \}, \\
Y_{\sigma} &:= \Span \{ \phi_j \otimes \chi_k : 1 \leq j \leq N_h, 1 \leq k \leq N_{\tau} \},
\end{align}
where $\chi_k$ is the indicator function of $I_k$ and $\left(\phi_j\right)_{j=1}^{N_h}$ is the nodal basis formed by continuous piecewise linear functions satisfying $\phi_j(x_i)=\updelta_{ij}$. 

We choose the space $Y_{\sigma}$ as our discrete state and test space in a dG(0) approximation of \eqref{eq:PDE}. The control space remains either $U_{\alpha}$ or $U_{\alpha}^+$. 

% discrete state equation

This approximation scheme is equivalent to an implicit Euler time stepping scheme. To see this we recall that the elements $y_{\sigma} \in Y_{\sigma}$ can be represented as
\begin{equation*}
y_{\sigma} = \textstyle \sum_{k=1}^{N_{\tau}} y_{k,h} \otimes \chi_k,
\end{equation*}
with $y_{k,h} := y_{\sigma}|_{I_k} \in Y_h$. 

Given a control $u \in U_{\alpha}$ for $k = 1, \ldots ,N_{\tau}$ and $z_h \in Y_h$ we thus end up with the variational discrete scheme
\begin{equation}
\begin{cases}
\left( y_{k,h}-y_{k-1,h} , z_h \right)_{L^2} + a \, \tau_k \int_{\Omega}{\nabla y_{k,h} \nabla z_h \, dx}   \\
\quad +\int_{I_k}\int_{\Omega} b(x,t) \nabla y_{k,h} \, z_h \, + c(x,t) y_{k,h} \, z_h \, dx\, dt  =  \int_{I_k}\int_{\Omega} f\, z_h \, dx\, dt, \\
y_{0,h}=y_{0h},
\end{cases}
\label{eq:dse}
\end{equation}
where $y_{0h} \in Y_h$ is the unique element satisfying:
\begin{equation}
(y_{0h},z_h) = \int_{\Omega}{z_h \, d u} \qquad \forall \, z_h \in Y_h.\label{eq:y0h}\\
\end{equation}
Here $(\cdot,\cdot)_{L^2}$ denotes the $L^2(\Omega)$ inner product. We assume that the discretization parameters $h$ and $\tau$ are sufficiently small, such that there exists a unique solution to \eqref{eq:dse} for general functions $b$ and $c$.\\

% variational discrete problem (P_sigma) and (P_sigma^+)
The variational discrete counterparts to \eqref{eq:Palpha} and \eqref{eq:Palphaplus} now read
\begin{equation}
\min_{u \in U_{\alpha}} J_{\sigma}(u) = \frac{1}{2} \| y_{u,\sigma}(T) - y_d\|^2_{L^2(\Omega_h)},
\tag{$P_{\alpha,\sigma}$}
\label{eq:Psigma}
\end{equation}
and 
\begin{equation}
\min_{u \in U_{\alpha}^+} J_{\sigma}(u) = \frac{1}{2} \| y_{u,\sigma}(T) - y_d\|^2_{L^2(\Omega_h)},
\tag{$P_{\alpha,\sigma}^+$}
\label{eq:Psigmaplus}
\end{equation}
respectively, where in both cases $y_{u,\sigma}$ for given $u$ denotes the unique solution of \eqref{eq:dse}.  It is now straightforward to show that the optimality conditions for the problems \eqref{eq:Psigma} and \eqref{eq:Psigmaplus} read like those for \eqref{eq:Palpha} and \eqref{eq:Palphaplus} with the adjoint $\varphi$ replaced by $\varphi_{\bar u,\sigma} \in Y_h$ for given solution $\bar u$, the solution to the following system for $k = 1, \ldots, N_{\tau}$ and $z_h \in Y_h$ :
\begin{equation}
\begin{cases}
- \left( \varphi_{k,h}-\varphi_{k-1,h} , z_h \right)_{L^2} + a \, \tau_k \int_{\Omega}{\nabla \varphi_{k-1,h} \nabla z_h \, dx}   \\
\quad +\int_{I_k}\int_{\Omega} - \divergence( b(x,t) \varphi_{k-1,h}) \, z_h \, + c(x,t) \varphi_{k-1,h} \, z_h \, dx\, dt  =  0,\\
\varphi_{N_{\tau},h}=\varphi_{N_{\tau} h},
\end{cases}
\label{eq:adjdse}
\end{equation}
where $z_h \in Y_h$ and $\varphi_{N_{\tau} h} \in Y_h$ is the unique element satisfying:
\begin{equation}
(\varphi_{N_{\tau} h},z_h) = \int_{\Omega}{ (y_{\bar u, \sigma}(T)-y_d) z_h \, d x} \qquad \forall \, z_h \in Y_h.\label{eq:varphiNth}\\
\end{equation}
For details on the derivation of the optimality conditions we refer to Theorem \ref{thm:optconddisc} and Theorem \ref{thm:optconddiscplus}, which will be proven after introducing a few helpful results.

This in particular implies that
\begin{align}\label{eq:suppdiskret}
\supp(\bar{u}^+) &\subset \{ x \in \bar{\Omega} : \varphi_{\bar u,\sigma}(x,0) = - \|\varphi_{\bar u,\sigma}(0)\|_{\infty} \},  \\
\supp(\bar{u}^-) &\subset \{ x \in \bar{\Omega} : \varphi_{\bar u,\sigma}(x,0) = + \|\varphi_{\bar u,\sigma}(0)\|_{\infty} \}. \notag
\end{align}
Analogously for \eqref{eq:Psigmaplus}, in the case $u(\bar \Omega) = \alpha$ we have the optimality condition
\begin{equation*}
\supp(\bar u) \subset \{ x \in \bar{\Omega} : \varphi_{\bar u,\sigma}(x,0) = \min_{x \in \bar{\Omega}} \varphi_{\bar u,\sigma}(x,0)  \}.
\end{equation*}
Since, in both cases, $\varphi_{\bar u,\sigma}$ is a piecewise linear and continuous function, the extremal value in the generic case can only be attained at grid points, which leads to 
\begin{equation*}
\supp(\bar{u}) \subset \{x_j\}_{j=1}^{N_h}.
\end{equation*}
So, we derive the implicit discrete structure:
\begin{equation*}
\bar{u} \in U_h := \Span \{ \delta_{x_j} : 1 \leq j \leq N_h \},
\end{equation*}
where $\delta_{x_j}$ denotes a Dirac measure at gridpoint $x_j$. In the case of \eqref{eq:Psigmaplus} we even know that all coefficients will be positive and hence we get
\begin{equation*}
\bar u \in U_h^+ := \left\{ \textstyle\sum_{j=1}^{N_h} u_j \delta_{x_j} : u_j \geq 0  \right\}.
\end{equation*}
Notice also that the natural pairing $\mathcal{M}(\bar \Omega) \times \mathcal{C}(\bar \Omega) \rightarrow \mathbb{R}$ induces the duality $Y_h^* \cong U_h$ in the discrete setting. Here we see the effect of the variational discretization concept: The choice for the discretization of the test space induces a natural discretization for the controls.

We note that the use of piecewise linear and continuous Ansatz- and test-functions in the variational discretization creates a setting, where the optimal control is supported on space grid points. However, it is possible to use piecewise quadratic and continuous Ansatz- and test-functions, so that the discrete adjoint variable can attain its extremal values not only on grid points, but anywhere. Calculating the location of these extremal values, then, would mean to determine the potential support of the optimal control - not limited to grid points anymore. 

% linear operator M(Omega) -> U_h
The following operator will be useful for the discussion of solutions to \eqref{eq:Psigma}.
\begin{lemma}
	Let the linear operator $\Upsilon_h$ be defined as below:
	\begin{equation*}
	\Upsilon_h: \mathcal{M}(\bar \Omega) \rightarrow U_h \subset \mathcal{M}(\bar \Omega), \qquad \Upsilon_h u := \sum_{j =1}^{N_h} \delta_{x_j} \int_{{\Omega}} \phi_j \, d u.
	\end{equation*}
	Then for every $u \in \mathcal{M}(\bar \Omega) $ and $\varphi_h \in Y_h$ the following properties hold.
	\begin{align}
	\left\langle u, \varphi_h\right\rangle &= \left\langle \Upsilon_h u, \varphi_h\right\rangle, \label{eq:proj1}\\
	\| \Upsilon_h u \|_{\mathcal{M}(\bar \Omega)} &\leq \|  u \|_{\mathcal{M}(\bar \Omega)}. \label{eq:proj2}
	\end{align}
\end{lemma}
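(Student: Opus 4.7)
For \eqref{eq:proj1}, I would expand $\varphi_h \in Y_h$ in the nodal basis as $\varphi_h = \sum_{j=1}^{N_h} \varphi_h(x_j)\phi_j$, so that
\[
\langle u,\varphi_h\rangle \;=\; \int_{\bar\Omega} \varphi_h\, du \;=\; \sum_{j=1}^{N_h} \varphi_h(x_j)\int_{\Omega}\phi_j\, du \;=\; \Bigl\langle \sum_{j=1}^{N_h}\delta_{x_j}\int_\Omega \phi_j\, du,\; \varphi_h\Bigr\rangle \;=\; \langle \Upsilon_h u,\varphi_h\rangle,
\]
where in the third equality I use that $\delta_{x_j}$ tested against $\varphi_h$ returns $\varphi_h(x_j)$. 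This step is essentially bookkeeping.

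For \eqref{eq:proj2}, the idea is to use the dual characterization of the total variation norm together with the fact that the continuous piecewise linear nodal basis $\{\phi_j\}$ is a non-negative partition of unity on $\bar\Omega_h$, i.e.\ $\phi_j\ge 0$ and $\sum_{j=1}^{N_h}\phi_j \equiv 1$. Given $\psi \in C(\bar\Omega)$ with $\|\psi\|_{C(\bar\Omega)}\le 1$, I set $\tilde\psi := \sum_{j=1}^{N_h}\psi(x_j)\phi_j \in Y_h$. By the partition-of-unity and non-negativity properties,
\[
|\tilde\psi(x)| \;\le\; \sum_{j=1}^{N_h} |\psi(x_j)|\phi_j(x) \;\le\; \|\psi\|_{C(\bar\Omega)} \sum_{j=1}^{N_h}\phi_j(x) \;=\; \|\psi\|_{C(\bar\Omega)} \;\le\; 1,
\]
so $\tilde\psi$ is admissible in the supremum defining $\|u\|_{\mathcal{M}(\bar\Omega)}$. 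Then
\[
\int_{\bar\Omega}\psi \, d(\Upsilon_h u) \;=\; \sum_{j=1}^{N_h}\psi(x_j)\int_\Omega \phi_j\, du \;=\; \int_{\bar\Omega}\tilde\psi\, du \;\le\; \|u\|_{\mathcal{M}(\bar\Omega)},
\]
and taking the supremum over admissible $\psi$ yields \eqref{eq:proj2}.

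The only delicate point is the passage from the test function $\psi \in C(\bar\Omega)$ to the discrete object $\tilde\psi$: one must verify that this substitution preserves admissibility in the dual norm, which is exactly where the partition-of-unity property and non-negativity of the nodal basis functions enter. If $\bar\Omega_h \neq \bar\Omega$, this can be handled by treating the controls as elements of $\mathcal{M}(\bar\Omega_h)$ in the discrete setting (as is implicit throughout Section~\ref{sec:VD}), so that $\tilde\psi$ is well-defined on the relevant support. Beyond this observation, the argument is a direct computation.
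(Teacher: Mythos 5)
Your proof is correct and uses the same ingredients as the standard argument: the paper itself gives no proof but cites \cite[Proposition 4.1]{CasKun}, where \eqref{eq:proj1} follows from the nodal expansion $\varphi_h=\sum_j\varphi_h(x_j)\phi_j$ exactly as you write, and \eqref{eq:proj2} from the direct estimate $\|\Upsilon_h u\|_{\mathcal{M}(\bar\Omega)}=\sum_{j=1}^{N_h}\bigl|\int_{\Omega}\phi_j\,du\bigr|\leq\int_{\Omega}\sum_{j=1}^{N_h}\phi_j\,d|u|\leq|u|(\bar\Omega)$, which rests on the same two facts you isolate ($\phi_j\geq 0$ and the partition of unity). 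Your detour through the dual characterization of the measure norm is merely an equivalent packaging of that estimate, and your caveat about $\bar\Omega_h\neq\bar\Omega$ is sensible but harmless, since outside $\bar\Omega_h$ one still has $0\leq\sum_j\phi_j\leq 1$.
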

These results have been proven in \cite[Proposition 4.1.]{CasKun}. Furthermore, it is obvious, for piecewise linear and continuous finite elements, that $\Upsilon_h(\mathcal{M}^+(\bar \Omega)) \subset U_h^+$.

% Theorem existence of solutions, uniqueness in discrete space U h for (P_sigma) and (P_sigma^+)

The mapping $u \mapsto y_{u,\sigma}(T)$ is in general not injective, hence the uniqueness of the solution cannot be concluded. In the implicitly discrete setting however, we can prove uniqueness similarly as done in \cite[Section 4.3.]{CasasClasonKunisch} and \cite[Theorem 11]{HerbergHS}.
\begin{theorem}
	\label{thm:solPsigma}
	The problem \eqref{eq:Psigma} has at least one solution in $\mathcal{M}(\bar \Omega)$ and there exists a unique solution $\bar u \in U_h$. Furthermore, for every solution $\hat u \in \mathcal{M}(\bar \Omega)$ of \eqref{eq:Psigma} it holds $\Upsilon_h \hat u = \bar u $. Moreover, if $\bar \varphi_h(x_j) \neq \bar \varphi_h(x_k)$ for all neighboring finite element nodes $x_j \neq x_k$ of the finite element nodes $x_j (j = 1,\ldots,N_h)$, problem \eqref{eq:Psigma} admits a unique solution, which is an element of $U_h$. 
\end{theorem}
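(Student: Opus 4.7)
I plan to combine the weak-$*$ compactness of $U_\alpha$ with the projection operator $\Upsilon_h$ in order to reduce the problem to a uniqueness question on the finite-dimensional space $U_h$.

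First, I would establish existence by the direct method. Since $U_\alpha$ is weak-$*$ compact in $\mathcal{M}(\bar\Omega)=C(\bar\Omega)^*$ by Banach--Alaoglu, and since \eqref{eq:y0h} makes $u \mapsto y_{0h}$ weak-$*$ continuous into the finite-dimensional space $Y_h$, the control-to-final-state map $u \mapsto y_{u,\sigma}(T)$ is weak-$*$ continuous, so $J_\sigma$ is weak-$*$ continuous on $U_\alpha$ and a minimizer exists. Next, I would bring in $\Upsilon_h$: property \eqref{eq:proj1} together with \eqref{eq:y0h} yields $y_{0h}(u) = y_{0h}(\Upsilon_h u)$, hence $y_{u,\sigma}(T) = y_{\Upsilon_h u,\sigma}(T)$ and $J_\sigma(u) = J_\sigma(\Upsilon_h u)$, while \eqref{eq:proj2} ensures $\Upsilon_h u \in U_\alpha \cap U_h$ whenever $u \in U_\alpha$. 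Consequently, for every solution $\hat u$ the measure $\Upsilon_h \hat u \in U_h$ is again a solution, and it suffices to establish uniqueness within $U_h$.

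To prove uniqueness in $U_h$, I would identify $u = \sum_j u_j \delta_{x_j} \in U_h$ with its coefficient vector $\mathbf u$ and note that \eqref{eq:y0h} reduces to $M\mathbf{y}_0 = \mathbf u$ with $M$ the invertible mass matrix, so $u \mapsto y_{0h}$ is a bijection $U_h \to Y_h$. Under the paper's standing assumption of unique solvability of \eqref{eq:dse}, every implicit Euler step is an invertible affine map, so $u \mapsto y_{u,\sigma}(T)$ is affine and injective on $U_h$. The strictly convex function $\tfrac12\|\cdot - y_d\|^2_{L^2(\Omega_h)}$ composed with an injective affine map is strictly convex in $u$, so $J_\sigma$ is strictly convex on the convex set $U_\alpha \cap U_h$ and admits a unique minimizer $\bar u$. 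Combined with the preceding paragraph, this also yields $\Upsilon_h \hat u = \bar u$ for every solution $\hat u \in \mathcal{M}(\bar\Omega)$.

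Finally, for the last assertion, I would argue that strict convexity of $\tfrac12\|\cdot - y_d\|^2$ on the (convex) reachable set forces every solution to produce the same final state $y_{\bar u,\sigma}(T)$, and therefore the same discrete adjoint $\bar\varphi_h$. Applying \eqref{eq:suppdiskret} to an arbitrary solution $\hat u$, its positive and negative parts are supported in the extremal sets of $\bar\varphi_h(\cdot,0)$. Because $\bar\varphi_h(\cdot,0)$ is affine on each simplex $K \in \mathcal{K}_h$ and the neighboring-node hypothesis forces its vertex values on $K$ to be pairwise distinct, the extremal values are attained only at isolated grid points, so $\supp(\hat u) \subset \{x_j\}_{j=1}^{N_h}$, i.e.\ $\hat u \in U_h$, and uniqueness in $U_h$ forces $\hat u = \bar u$. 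I expect the main obstacle to be this last piecewise-linear argument — ruling out hidden flat regions of $\bar\varphi_h(\cdot,0)$ at an extremum under the neighboring-node hypothesis, which relies on the fact that an affine function on a simplex whose vertex values are pairwise distinct attains its maximum and minimum only at the unique extremal vertices.
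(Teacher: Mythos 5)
Your proposal is correct and follows essentially the same route as the paper: existence via weak-$*$ compactness of $U_\alpha$, reduction to $U_h$ via the properties \eqref{eq:proj1}--\eqref{eq:proj2} of $\Upsilon_h$, uniqueness in $U_h$ from injectivity of $u\mapsto y_{u,\sigma}(T)$ plus strict convexity, and the piecewise-linear support argument for the final claim. Your justification of injectivity on $U_h$ (invertibility of the mass matrix and of each implicit Euler step) is in fact more explicit than the paper's bare dimension count; the only small point to add is that \eqref{eq:suppdiskret} requires $\|\hat u\|_{\mathcal{M}(\bar\Omega)}=\alpha$, which under the neighboring-node hypothesis holds for every nonzero solution because the adjoint cannot vanish identically (and $\hat u=0$ lies in $U_h$ anyway).
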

\begin{proof}
	% existence like continuous case
	The existence of solutions can be derived as for the continuous problem, see \cite[Theorem 2.4.]{CK19}, since the control domain remains continuous. We include the details for the convenience of the reader.
	
	The control domain $U_{\alpha}$ is bounded and weakly-* closed in $\mathcal{M}(\bar \Omega)$. From Banach-Alaoglu-Bourbaki theorem we even know that it is weakly-* compact, see e.g. \cite[Theorem 3.16.]{MR2759829}. Hence, any minimizing sequence is bounded in $\mathcal{M}(\bar \Omega)$ and any weak-* limit belongs to the control domain $U_{\alpha}$. Using convergence properties from \cite[Theorem 2.3.]{CK19} we can conclude that any of these limits is a solution to \eqref{eq:Psigma}.
	
	Let $\hat u \in \mathcal{M}(\bar \Omega)$ be a solution of \eqref{eq:Psigma} and $\bar u := \Upsilon_h \hat u \in U_h$. From \eqref{eq:proj1} we have 
	\begin{equation*}
	y_{u,\sigma} = y_{\Upsilon_h u, \sigma} \quad \text{ for all } u \in \mathcal{M}(\bar \Omega).
	\end{equation*}
	From this we deduce $J_{\sigma}(\bar u) = J_{\sigma}(\hat u)$. Moreover \eqref{eq:proj2} delivers 
	\begin{equation*}
	\| \bar u \|_{\mathcal{M}(\bar \Omega)} \leq \| \hat u \|_{\mathcal{M}(\bar \Omega)},
	\end{equation*}
	so $\bar u$ is admissible, since $\hat u \in U_{\alpha}$. Altogether, this shows the existence of solutions in the discrete space $U_h$.

	% uniqueness in the discrete space
	Since the mapping $u \mapsto y_{u,\sigma}(T)$ is injective for $u \in U_h$ - since we have $\dim(U_h) = \dim(Y_h)$, and $J_{\sigma}(u)$ is a quadratic function, we deduce strict convexity of $J_{\sigma}(u)$ on $U_h$. Furthermore $\left\{ u \in U_h : \|u\|_{\mathcal{M}(\bar \Omega)} = \sum_{j=1}^{N_h}|u_j| \leq \alpha \right\}$ is a closed and convex set, so we can conclude the uniqueness of the solution in the discrete space.
	
	% projection
	For every solution $\hat u \in \mathcal{M}(\bar \Omega)$ of \eqref{eq:Psigma}, the projection $\Upsilon_h \hat u$ is a discrete solution. Moreover, there exists only one discrete solution. So we deduce that all projections must coincide.
	
	If now $\bar\varphi_h(x_j)\neq \bar\varphi_h(x_k)$ for all neighbors $k\neq j$, every solution $u$ of \eqref{eq:Psigma} has its support in some of the finite element nodes of the triangulation, or vanish identically, and thus is an element of $U_h$. This shows the unique solvability of \eqref{eq:Psigma} in this case.

\end{proof}

\begin{remark}
	We note that the condition on the values of $\bar \varphi_h$ in the finite element nodes for guaranteeing uniqueness can be checked once the discrete adjoint solution is known. This condition is thus fully practical.
\end{remark}

For \eqref{eq:Psigmaplus} we have a similar result like Theorem \ref{thm:solPsigma}, which we state without proof, since it can be interpreted as a special case of Theorem \ref{thm:solPsigma} and can be proven analogously.
\begin{theorem}
	The problem \eqref{eq:Psigmaplus} has at least one solution in $\mathcal{M}^+(\bar \Omega)$ and there exists a unique solution $\bar u \in U_h^+$. Furthermore, for every solution $\hat u \in \mathcal{M}^+(\bar \Omega)$ of \eqref{eq:Psigmaplus} it holds $\Upsilon_h \hat u = \bar u $. Moreover, if $\bar \varphi_h(x_j) \neq \bar \varphi_h(x_k)$ for all neighboring finite element nodes $x_j \neq x_k$ of the finite element nodes $x_j (j = 1,\ldots,N_h)$, problem \eqref{eq:Psigmaplus} admits a unique solution, which is an element of $U_h^+$. 
\end{theorem}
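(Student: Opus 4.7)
The plan is to mimic the four-step argument behind Theorem~\ref{thm:solPsigma}, adapting each step to the cone $\mathcal{M}^+(\bar\Omega)$. For existence in $\mathcal{M}^+(\bar\Omega)$ I would observe that $U_\alpha^+$ is bounded and weakly-$*$ closed in $\mathcal{M}(\bar\Omega)$, since the non-negativity cone is itself weakly-$*$ closed; Banach--Alaoglu--Bourbaki then yields weak-$*$ compactness. The usual minimizing-sequence argument together with continuity of $u\mapsto y_{u,\sigma}(T)$ under weak-$*$ convergence delivers a minimizer.

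To pass to $U_h^+$, take an arbitrary solution $\hat u\in\mathcal{M}^+(\bar\Omega)$ and set $\bar u := \Upsilon_h\hat u$. Positivity is preserved by the observation $\Upsilon_h(\mathcal{M}^+(\bar\Omega))\subset U_h^+$ noted after the operator lemma, while \eqref{eq:proj1} and \eqref{eq:proj2} yield $J_\sigma(\hat u)=J_\sigma(\bar u)$ and $\|\bar u\|_{\mathcal{M}(\bar\Omega)}\leq\alpha$, so $\bar u\in U_h^+\cap U_\alpha$ is itself a solution. Strict convexity of $J_\sigma$ on $U_h$ — valid because $\dim U_h=\dim Y_h$ makes $u\mapsto y_{u,\sigma}(T)$ injective on $U_h$ — combined with closedness and convexity of $U_h^+\cap U_\alpha$ gives uniqueness in $U_h^+$. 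The projection identity $\Upsilon_h\hat u=\bar u$ for every continuous solution is then automatic.

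For the conditional uniqueness claim, observe first that for every solution $\hat u$ the associated adjoint depends only on $y_{\hat u,\sigma}(T)=y_{\bar u,\sigma}(T)$ and hence coincides with $\bar\varphi_h$. I would split cases using the discrete analogue of the variational inequality~\eqref{eq:intineq} from Theorem~\ref{thm:optcondplus}. In the active case $\hat u(\bar\Omega)=\alpha$, the discrete counterpart of~\eqref{eq:supppos} gives $\supp(\hat u)\subset\{x\in\bar\Omega : \bar\varphi_h(x,0)=\min_{\bar\Omega}\bar\varphi_h(\cdot,0)\}$. In the inactive case $\hat u(\bar\Omega)<\alpha$ I would test the variational inequality with admissible perturbations $\hat u\pm\varepsilon v$, $v\geq 0$, to deduce $\bar\varphi_h\geq 0$ on $\bar\Omega$ and $\bar\varphi_h=0$ on $\supp(\hat u)$. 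Under the nodal condition a piecewise linear continuous function with pairwise distinct neighbouring nodal values attains its minimum — and, when non-negative, its zero set — only at isolated grid nodes, so in both cases $\supp(\hat u)\subset\{x_j\}_{j=1}^{N_h}$, i.e.\ $\hat u\in U_h^+$, and the previously proven uniqueness in $U_h^+$ forces $\hat u=\bar u$.

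The main obstacle compared with Theorem~\ref{thm:solPsigma} is precisely the inactive case $\hat u(\bar\Omega)<\alpha$: Remark~\ref{remark} warns that — unlike in the signed-measure setting — one cannot shortcut via $y_{\hat u,\sigma}(T)=y_d$ and $\bar\varphi_h\equiv 0$. Replacing that shortcut by a direct perturbation argument inside the positive cone, yielding sign and vanishing information for $\bar\varphi_h$, is the step that requires the most care.
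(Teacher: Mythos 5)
Your proposal is correct and follows essentially the route the paper intends: the paper states this theorem without proof, declaring it a special case of Theorem~\ref{thm:solPsigma} to be proven analogously, and your existence, projection, and strict-convexity steps reproduce that argument verbatim on the cone $\mathcal{M}^+(\bar\Omega)$. Your extra perturbation argument for the inactive case $\hat u(\bar\Omega)<\alpha$ (testing with $\hat u+\varepsilon\delta_x$ and $(1-\varepsilon)\hat u$ to get $\bar\varphi_h(\cdot,0)\geq 0$ and $\bar\varphi_h(\cdot,0)=0$ on $\supp(\hat u)$) correctly supplies the one detail where the analogy to the signed case is not immediate, which the paper leaves implicit.
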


Now, we introduce two useful lemmas. 

\begin{lemma}
	\label{lem:2.6}
	Given $u \in \mathcal{M}(\bar \Omega)$, the solution $z_{u,\sigma} \in Y_h$ to \eqref{eq:dse} with $f \equiv 0$ satisfies
	\begin{equation}
	\label{eq:integraleq}
	\int_{\Omega} (y_{u,\sigma}(T)-y_d)z_{u,\sigma}(T) \, dx = \int_{ \Omega} \varphi_{u, \sigma}(0) \, du.
	\end{equation}
\end{lemma}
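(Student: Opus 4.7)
The plan is to derive the identity by testing the discrete state and adjoint systems against each other and telescoping in time — the standard duality argument for an adjoint identity, adapted to the dG(0) time stepping and the measure-valued initial data.

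First I would take $z_h = \varphi_{k-1,h}$ in the state equation \eqref{eq:dse} (with $f \equiv 0$, so that its solution is $z_{u,\sigma}$) for each $k=1,\ldots,N_\tau$. Next I would take $z_h = z_{k,h}$ in the adjoint equation \eqref{eq:adjdse}. Subtracting the two identities, the temporal terms combine to
\begin{equation*}
(z_{k,h}-z_{k-1,h},\varphi_{k-1,h})_{L^2} + (\varphi_{k,h}-\varphi_{k-1,h},z_{k,h})_{L^2} = (z_{k,h},\varphi_{k,h})_{L^2} - (z_{k-1,h},\varphi_{k-1,h})_{L^2},
\end{equation*}
while the spatial bilinear contributions cancel: the diffusion term $a\tau_k(\nabla z_{k,h},\nabla\varphi_{k-1,h})$ and the reaction term with $c$ appear identically in both equations, and the convection contributions $\int_{I_k}\int_\Omega (b\cdot\nabla z_{k,h})\varphi_{k-1,h}$ and $\int_{I_k}\int_\Omega -\divergence(b\varphi_{k-1,h})\,z_{k,h}$ match after one integration by parts (using the Neumann structure built into the test space $\Phi$).

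After summing over $k=1,\ldots,N_\tau$, the temporal sum telescopes to
\begin{equation*}
(z_{N_\tau,h},\varphi_{N_\tau,h})_{L^2} - (z_{0,h},\varphi_{0,h})_{L^2} = 0.
\end{equation*}
The terminal relation \eqref{eq:varphiNth} identifies the first term as $\int_\Omega (y_{u,\sigma}(T)-y_d)\,z_{u,\sigma}(T)\,dx$, while the definition of $z_{0,h}$ from \eqref{eq:y0h}, applied with the test function $\varphi_{0,h} \in Y_h$, identifies the second as $\int_\Omega \varphi_{u,\sigma}(0)\,du$. Rearranging gives the claimed identity.

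The main obstacle is the careful bookkeeping for the convection term, i.e., confirming that $\int_\Omega (b\cdot\nabla z_{k,h})\varphi_{k-1,h}\,dx$ and $-\int_\Omega \divergence(b\varphi_{k-1,h})\,z_{k,h}\,dx$ really cancel within the variational discrete setting, since any residual boundary flux involving $b\cdot n$ would spoil the telescoping. Everything else — the matching of the diffusion and reaction terms, the time telescoping, and the evaluation of the boundary terms using \eqref{eq:y0h} and \eqref{eq:varphiNth} — is routine.
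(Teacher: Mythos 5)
Your proposal is correct and follows essentially the same route as the paper: test the discrete state system against the discrete adjoint components, test the adjoint system against the state components, cancel the spatial bilinear forms via Gau{\ss}' theorem, telescope the temporal terms to obtain $(z_{N_\tau,h},\varphi_{N_\tau,h})_{L^2}=(z_{0,h},\varphi_{0,h})_{L^2}$, and conclude with \eqref{eq:y0h} and \eqref{eq:varphiNth}. The only (harmless) difference is the choice of time levels for the test functions --- you pair the state equation with $\varphi_{k-1,h}$ and the adjoint with $z_{k,h}$, which in fact makes the spatial terms cancel index by index, while the paper uses the transposed pairing $\varphi_{k,h}$ / $z_{k-1,h}$; the convection boundary term you flag is likewise passed over in the paper's one-line appeal to Gau{\ss}' theorem.
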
 

\begin{proof}
	We take \eqref{eq:dse} with $f \equiv 0$ and test with $\varphi_{k,h}$, the components of $\varphi_{u,\sigma}$, for all $k = 1,\ldots,N_{\tau}$. Similarly we take \eqref{eq:adjdse} and test this with $z_{k-1,h}$, the components of $z_{u,\sigma}$, for all $k = 1,\ldots,N_{\tau} $. Now we can sum up the equations, and since in both cases the right hand side is zero, we can equalize those sums. Furthermore, we can apply Gauß' theorem and drop all terms that appear on both sides. This leads to
	\begin{alignat}{2}
	& \qquad\qquad\;\;\; \sum_{k=1}^{N_{\tau}} (z_{k,h}-z_{k-1,h}, \varphi_{k,h}) &&= \sum_{k=1}^{N_{\tau}} (-\varphi_{k,h}+\varphi_{k-1,h}, z_{k-1,h}) , \notag\\
	&\Rightarrow\; \sum_{k=1}^{N_{\tau}} (z_{k,h}, \varphi_{k,h}) - (z_{k-1,h}, \varphi_{k,h}) &&= \sum_{k=1}^{N_{\tau}} -(\varphi_{k,h}, z_{k-1,h}) + (\varphi_{k-1,h}, z_{k-1,h}) , \notag\\
	&\Rightarrow \qquad\qquad\qquad\quad\;\; \sum_{k=1}^{N_{\tau}} (z_{k,h}, \varphi_{k,h}) &&= \sum_{k=0}^{N_{\tau}-1} (\varphi_{k,h}, z_{k,h}), \notag \\
	&\Rightarrow \qquad\qquad\qquad\quad\;\;\, (z_{N_{\tau},h}, \varphi_{N_{\tau},h}) &&= (\varphi_{0,h},z_{0,h}). \notag
	\end{alignat}
	We have $z_{N_{\tau},h} = z_{u,\sigma}(T) \in Y_h$ and $\varphi_{0,h} = \varphi_{u,\sigma}(0) \in Y_h$, so together with \eqref{eq:y0h} and \eqref{eq:varphiNth} we can deduce \eqref{eq:integraleq}.
\end{proof}

\begin{lemma} \label{lem:2.7}
	For every $\epsilon >0$ and $h$ small enough, there exists a control $u \in L^2(\Omega)$, such that the solution $y_{u,\sigma}$ of \eqref{eq:dse} fulfills
	\begin{equation}
	\| y_{u,\sigma}(T) - y_d \|_{L^2(\Omega_h)} < \epsilon .
	\end{equation}
\end{lemma}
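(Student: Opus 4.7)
My plan is to decouple the statement into (i) a backward-solvability property of the discrete state equation on $Y_h$ and (ii) the classical density of $Y_h$ in $L^2(\Omega)$. More precisely, I would first show that every element of $Y_h$ is reachable at time $T$ by choosing a suitable $L^2$-initial density $u$, and then use (ii) to approximate $y_d$ within $\epsilon$ by such a reachable state.

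For the reachability step, I would observe that for fixed $f$ the map $y_{0,h} \mapsto y_{u,\sigma}(T)$ defined via \eqref{eq:dse} is affine in $y_{0,h}$, with linear part equal to the composition of the per-step operators $y_{k-1,h} \mapsto y_{k,h}$. Each of these has the form $y_{k,h} = \mathcal{S}_k^{-1}(M y_{k-1,h} + g_k)$, where $M$ is the mass matrix on $Y_h$ and $\mathcal{S}_k$ is the matrix associated with the left-hand side of \eqref{eq:dse}. Under the standing assumption that $h,\tau$ are small enough for \eqref{eq:dse} to be well posed, both $M$ and all $\mathcal{S}_k$ are invertible on $Y_h$, so the entire discrete solution operator is an affine bijection on $Y_h$. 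Consequently, for every target $\tilde y \in Y_h$ there is a unique $y_{0,h} \in Y_h$ with $y_{u,\sigma}(T) = \tilde y$. Taking $u := y_{0,h} \in Y_h \subset L^2(\Omega)$ (viewed as a measure with this $L^2$-density) and noting that \eqref{eq:y0h} is precisely the $L^2$-projection onto $Y_h$, which fixes elements of $Y_h$, I obtain $y_{u,\sigma}(T) = \tilde y$ exactly.

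To choose $\tilde y$, I would invoke the standard fact that for a quasi-uniform triangulation the $L^2$-projection $P_h y_d$ onto $Y_h$ satisfies $\|y_d - P_h y_d\|_{L^2(\Omega_h)} \to 0$ as $h \to 0$, which follows from the density of $Y_h$ in $L^2(\Omega)$. Given $\epsilon>0$, pick $h$ so small that this approximation error is below $\epsilon$, set $\tilde y := P_h y_d$, and construct $u$ as above. The only point meriting attention is the invertibility of the step matrices $\mathcal{S}_k$, but this is exactly the well-posedness hypothesis already placed on \eqref{eq:dse}, so no additional argument is needed. The main conceptual hurdle is therefore less an estimate than correctly identifying that variational discretization collapses an approximate-controllability question for a parabolic PDE into the trivial statement that a bijection on $Y_h$ is surjective, combined with a purely finite-element density fact.
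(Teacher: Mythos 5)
Your argument is correct and follows essentially the same route as the paper's proof: approximate $y_d$ by its $L^2$-projection onto $Y_h$ for $h$ small, and hit that projection exactly by inverting the initial-to-final-value map, which is an isomorphism (affine bijection) on $Y_h$ under the standing well-posedness assumption. The only difference is that you make explicit two details the paper leaves implicit, namely that the solution map is affine rather than linear and that \eqref{eq:y0h} reduces to the $L^2$-projection and hence fixes elements of $Y_h$, which is a welcome clarification but not a different proof.
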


\begin{proof}
	Let $y_{d,\sigma}$ be the $L^2$-projection of $y_d$ onto $Y_h$, then for $h$ small enough 
	\begin{equation*}
	\| y_{u,\sigma}(T) - y_d \|_{L^2(\Omega_h)} \leq \| y_{u,\sigma}(T) - y_{d,\sigma} \|_{L^2(\Omega_h)} + \underbrace{\| y_{d,\sigma} -y_d \|_{L^2(\Omega_h)}}_{< \epsilon}.
	\end{equation*}
	
	Let additionally $\tau$ be small enough, such that the scheme \eqref{eq:dse} has a unique solution. Then in every time-step we obtain a system of equations, where the matrix is an isomorphism on $Y_h$. Consequently the initial to final value map $y_{0h} \mapsto y_{u,\sigma}(T)$ is an isomorphism. Since $Y_h \subset L^2(\Omega_h)$ we can find $u \in L^2(\Omega)$, such that
	\begin{equation*}
	\| y_{u,\sigma}(T) - y_{d,\sigma} \|_{L^2(\Omega_h)} = 0,
	\end{equation*}
	which completes the proof.

\end{proof}

Finally, we give the discrete version of Theorem \ref{thm:optcond} and Theorem \ref{thm:optcondplus}. Both are proven very similarly to the continuous case, see \cite[Theorem 2.5 and Theorem 3.1]{CK19}.

\begin{theorem}
	\label{thm:optconddisc}
	Let $\bar u$ solve \eqref{eq:Psigma} with $y_{\bar u, \sigma}$ and $\varphi_{\bar u, \sigma}$ the associated discrete state and discrete adjoint state, respectively. Then for $\sigma$ small enough, 
	\begin{enumerate}
		\item if $\|\bar{u}\|_{\mathcal{M}(\bar{\Omega})} < \alpha$, then $y_{\bar u, \sigma}(T) = y_d$ and $\varphi_{\bar u, \sigma} = 0 \in Q$.
		\item if $\|\bar{u}\|_{\mathcal{M}(\bar{\Omega})} = \alpha$, then
		\begin{align} \label{eq:suppconddisc1}
		\supp(\bar{u}^+) &\subset \{ x \in \bar{\Omega} : \varphi_{\bar u, \sigma}(x,0) = - \|\varphi_{\bar u, \sigma}(0)\|_{C(\bar{\Omega})} \}, \\
		\supp(\bar{u}^-) &\subset \{ x \in \bar{\Omega} : \varphi_{\bar u, \sigma}(x,0) = + \|\varphi_{\bar u, \sigma}(0)\|_{C(\bar{\Omega})}\}, \label{eq:suppconddisc2}
		\end{align}
		where $\bar{u} = \bar{u}^+ - \bar{u}^- $ is the Jordan decomposition of $\bar{u}$. 
	\end{enumerate} 
	Conversely, if $\bar{u}$ is an element of $U_{\alpha}$ satisfying 1. or 2., then $\bar{u}$ is the solution to \eqref{eq:Psigma}.\\
\end{theorem}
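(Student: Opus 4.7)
The plan is to mirror the proof of the continuous Theorem \ref{thm:optcond} from \cite{CK19}, obtaining a variational inequality from Lemma \ref{lem:2.6} and then splitting into the two cases. First I would compute the derivative of $J_\sigma$ at $\bar u$ in an admissible direction $w \in \mathcal{M}(\bar\Omega)$. Because $J_\sigma$ is the composition of a convex quadratic with the linear map $u \mapsto y_{u,\sigma}(T)$, one has
$$
J_\sigma'(\bar u) w \;=\; \bigl(y_{\bar u,\sigma}(T)-y_d,\; z_{w,\sigma}(T)\bigr)_{L^2(\Omega_h)},
$$
where $z_{w,\sigma}$ solves \eqref{eq:dse} with $f\equiv 0$ and initial datum $w$. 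Repeating the telescoping argument in the proof of Lemma \ref{lem:2.6}, but now with the pair $(z_{w,\sigma},\varphi_{\bar u,\sigma})$, leads to $J_\sigma'(\bar u)w = \int_{\bar\Omega}\varphi_{\bar u,\sigma}(0)\,dw$. Combined with convexity of $U_\alpha$, optimality of $\bar u$ is equivalent to the variational inequality
$$
\int_{\bar\Omega}\varphi_{\bar u,\sigma}(0)\,d\bar u \;\le\; \int_{\bar\Omega}\varphi_{\bar u,\sigma}(0)\,du \qquad \forall\, u \in U_\alpha. \quad(\text{VI})
$$

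For statement 1, the assumption $\|\bar u\|_{\mathcal M(\bar\Omega)}<\alpha$ puts $\bar u$ in the interior of $U_\alpha$, so $\bar u\pm t w$ is admissible for small $t>0$ and arbitrary $w \in \mathcal{M}(\bar\Omega)$. Both signs yield $\int\varphi_{\bar u,\sigma}(0)\,dw = 0$. Testing with the Diracs $\delta_{x_j}$ at each grid node, and recalling that $\varphi_{\bar u,\sigma}(0) \in Y_h$ is piecewise linear and continuous, gives $\varphi_{\bar u,\sigma}(0) \equiv 0$. For $\sigma$ small enough each step of \eqref{eq:adjdse} is an isomorphism on $Y_h$ (the same smallness condition invoked in Lemma \ref{lem:2.7}), so propagating forward through the adjoint recursion produces $\varphi_{\bar u,\sigma} \equiv 0$ in $Q$. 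Relation \eqref{eq:varphiNth} then reduces to $(y_{\bar u,\sigma}(T)-y_d,z_h)_{L^2}=0$ for every $z_h \in Y_h$, which, since $y_{\bar u,\sigma}(T)\in Y_h$, is precisely the discrete identity $y_{\bar u,\sigma}(T)=y_d$ in $L^2(\Omega_h)$. Lemma \ref{lem:2.7} ensures this case is not vacuous in the small-$\sigma$ regime.

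For statement 2, work directly with (VI). The right-hand side is minimized over $U_\alpha$ by a measure of total mass $\alpha$ concentrated at an extremum of $\varphi_{\bar u,\sigma}(0)$ with the appropriate sign, giving
$$
\min_{u \in U_\alpha}\int_{\bar\Omega}\varphi_{\bar u,\sigma}(0)\,du \;=\; -\alpha\,\|\varphi_{\bar u,\sigma}(0)\|_{C(\bar\Omega)}.
$$
Since also $\int_{\bar\Omega}\varphi_{\bar u,\sigma}(0)\,d\bar u \ge -\|\varphi_{\bar u,\sigma}(0)\|_{C(\bar\Omega)}\|\bar u\|_{\mathcal M(\bar\Omega)} = -\alpha\,\|\varphi_{\bar u,\sigma}(0)\|_{C(\bar\Omega)}$, equality must hold throughout; the equality case of this duality pairing, applied to the Jordan decomposition $\bar u = \bar u^+-\bar u^-$, yields exactly the support conditions \eqref{eq:suppconddisc1}--\eqref{eq:suppconddisc2}. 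For the converse direction, convexity of $J_\sigma$ (convex quadratic composed with an affine map) makes (VI) sufficient for optimality, and checking that each of cases 1, 2 implies (VI) is a direct calculation.

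The main technical step is the first one: reusing the telescoping identity of Lemma \ref{lem:2.6} with mixed indices — state driven by $w$, adjoint driven by $\bar u$. The manipulation is formally identical because \eqref{eq:dse} and \eqref{eq:adjdse} are linear in their data, so the discrete Green's identity collapses to the same boundary contributions, leaving only the initial datum of $z_{w,\sigma}$ (namely $w$) and the terminal datum of $\varphi_{\bar u,\sigma}$ (namely $y_{\bar u,\sigma}(T)-y_d$) via \eqref{eq:y0h} and \eqref{eq:varphiNth}. Once this identity is in place, the remainder of the argument is routine convex-analytic bookkeeping already carried out in \cite{CK19}.
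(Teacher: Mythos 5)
Your proposal is correct, and for the variational inequality and case 2 it follows the paper's proof essentially verbatim: the same use of Lemma \ref{lem:2.6} to identify $J_\sigma'(\bar u)(u-\bar u)=\int\varphi_{\bar u,\sigma}(0)\,d(u-\bar u)$, the same reduction to $\alpha\|\varphi_{\bar u,\sigma}(0)\|_{C(\bar\Omega)}=-\int\varphi_{\bar u,\sigma}(0)\,d\bar u$, and the same equality-case/Jordan-decomposition argument for \eqref{eq:suppconddisc1}--\eqref{eq:suppconddisc2}. Where you genuinely diverge is case 1. The paper argues by contradiction: if $y_{\bar u,\sigma}(T)\neq y_d$ then $J_\sigma(\bar u)>0$, Lemma \ref{lem:2.7} supplies a strictly better (possibly inadmissible) control, and a convex combination with $\bar u$ stays in $U_\alpha$ while lowering the cost; this forces $y_{\bar u,\sigma}(T)=y_d$ first, and $\varphi_{\bar u,\sigma}=0$ is then read off from \eqref{eq:varphiNth}. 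You instead exploit that $\bar u$ is an interior point of $U_\alpha$, so the first-order condition gives $\int\varphi_{\bar u,\sigma}(0)\,dw=0$ for all $w$, hence $\varphi_{\bar u,\sigma}(0)\equiv 0$ by testing with nodal Diracs, and then propagate through the invertible time steps of \eqref{eq:adjdse} to get $\varphi_{\bar u,\sigma}\equiv 0$ and finally $y_{\bar u,\sigma}(T)=y_d$ from \eqref{eq:varphiNth}. Your route is more self-contained (it does not need Lemma \ref{lem:2.7} for this implication and uses only the finite-dimensional isomorphism property of each time step), and it establishes the two conclusions in the opposite order; the paper's route is the one that carries over to the continuous Theorem \ref{thm:optcond}, where no such step-by-step propagation of the adjoint is available. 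One shared caveat, not a defect of your argument relative to the paper's: from $\varphi_{N_\tau,h}=0$ and \eqref{eq:varphiNth} one strictly obtains that $y_{\bar u,\sigma}(T)$ is the $L^2$-projection of $y_d$ onto $Y_h$, so the literal identity $y_{\bar u,\sigma}(T)=y_d$ requires $y_d$ to be reachable on the grid --- a point the paper itself concedes in the numerical section.
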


\begin{proof}
	Let $u \in U_{\alpha}$ arbitrary and denote by $z_{(u-\bar u),\sigma}$ the solution to \eqref{eq:dse} with $f \equiv 0$ and $u$ replaced by $u - \bar u$. From Lemma \ref{lem:2.6} we get
	\begin{align*}
	\lim_{\rho \searrow 0} \frac{J_{\sigma}(\bar u + \rho (u-\bar u)) - J_{\sigma}(\bar u)}{\rho} &= \int_{\Omega} (y_{\bar u, \sigma}(T)-y_d) z_{(u-\bar u),\sigma}(T) \, d x \\
	&= \int_{\Omega} \varphi_{\bar u, \sigma}(0) \, d (u-\bar u).
	\end{align*}
	Since \eqref{eq:Psigma} is a convex problem, the following variational inequality is a necessary and sufficient condition for optimality of a control $\bar u \in U_{\alpha}$:
	\begin{alignat}{3}
	& \qquad\qquad\quad\, J_{\sigma}'(\bar u)(u - \bar u) &&= \int_{\Omega} \varphi_{\bar u, \sigma}(0) \, d (u-\bar u) \geq 0 \qquad &&\forall u \in U_{\alpha} ,\notag \\
	&\Rightarrow \qquad\;\, - \int_{\Omega} \varphi_{\bar u, \sigma}(0) \, d u &&\leq - \int_{\Omega} \varphi_{\bar u, \sigma}(0) \, d \bar u \qquad &&\forall u \in U_{\alpha}, \notag\\
	&\Rightarrow \quad \sup_{u \in U_{\alpha} } \int_{\Omega} \varphi_{\bar u, \sigma}(0) \, d u &&= - \int_{\Omega} \varphi_{\bar u, \sigma}(0) \, d \bar u, && \notag \\
	&\Rightarrow \qquad \alpha \| \varphi_{\bar u, \sigma}(0) \|_{\mathcal{C}(\bar \Omega)} &&= - \int_{\Omega} \varphi_{\bar u, \sigma}(0) \, d \bar u .&& \notag
	\end{alignat}
	Let $\| \bar u \|_{\mathcal{M}(\bar \Omega)} = \alpha$, then this is 
	\begin{equation}
	\label{eq:2.18}
	\| \bar u \|_{\mathcal{M}(\bar \Omega)} \| \varphi_{\bar u, \sigma}(0) \|_{\mathcal{C}(\bar \Omega)} = - \int_{\Omega} \varphi_{\bar u, \sigma}(0) \, d \bar u.
	\end{equation}
	We now may conclude as in \cite[Lemma 3.4]{CasasClasonKunisch} to obtain \eqref{eq:suppconddisc1} and \eqref{eq:suppconddisc2}. Also, if these conditions hold we get the equality \eqref{eq:2.18}, which is a necessary and sufficient condition for optimality of $\bar u$, so $\bar u$ solves \eqref{eq:Psigma}.
	
	Let us now study the case $\| \bar u \|_{\mathcal{M}(\bar \Omega )} < \alpha$. If $y_{\bar u, \sigma} = y_d$, then $J_{\sigma}(\bar u) = 0$ and since $J_{\sigma}(u) \geq 0$ for all $u \in U_{\alpha}$, we deduce that $\bar u$ solves \eqref{eq:Psigmaplus}. Now assume that $\bar u$ solves \eqref{eq:Psigmaplus} and $y_{\bar u, \sigma} \neq y_d$ holds. Then we have $J_{\sigma}(\bar u) >0$. From Lemma \ref{lem:2.7} we know that for $h$ small enough there exists an element $u \in \mathcal{M}(\bar \Omega)$, such that $J_{\sigma}(u) < J_{\sigma} (\bar u)$. Since $\bar u$ is a solution to \eqref{eq:Psigmaplus}, it must hold $u \notin U_{\alpha}$. Now take $\lambda \in \mathbb{R}$, such that
	\begin{equation}
	0 < \lambda < \min \left\{ \frac{\alpha - \|\bar u \|_{\mathcal{M}(\bar \Omega)}}{ \|u - \bar u \|_{\mathcal{M}(\bar \Omega)} } , 1 \right\}.
	\end{equation}
	Then $v := \bar u + \lambda (u - \bar u) \in U_{\alpha}$ and by convexity of $J_{\sigma}$ we get
	\begin{equation*}
	J_{\sigma}(v) = J_{\sigma}(\lambda u + (1-\lambda) \bar u)	\leq \lambda \underbrace{J_{\sigma}(u)}_{< J_{\sigma}(\bar u)} + (1-\lambda) J_{\sigma}(\bar u)  < J_{\sigma} (\bar u),
	\end{equation*}
	so that $\bar u \in U_{\alpha}$ can not be the solution of \eqref{eq:Psigmaplus}. Hence $y_{\bar u, \sigma} = y_d$ must hold and from \eqref{eq:varphiNth} we deduce $\varphi_{\bar u, \sigma} = 0$.
	
\end{proof}

\begin{theorem}
	\label{thm:optconddiscplus}
	Let $\bar{u}$ solve \eqref{eq:Psigmaplus} with associated discrete adjoint state $\varphi_{\bar u, \sigma}$. Then, $\bar{u}$ is a solution of \eqref{eq:Psigmaplus} if and only if 
	\begin{equation}
	\int_{{\Omega}} \varphi_{\bar u, \sigma}(x,0) \, d \bar{u} \leq \int_{{\Omega}} \varphi_{\bar u, \sigma}(x,0) \, d u \qquad \forall \, u \in U^+_{\alpha}.
	\label{eq:intineqdisc} 
	\end{equation}
	If $\bar u(\bar{\Omega}) = \alpha$ the following properties are fulfilled:
	\begin{enumerate}
		\item Inequality \eqref{eq:intineqdisc} is equivalent to the identity  
		\begin{equation}
		\int_{{\Omega}} \varphi_{\bar u, \sigma}(x,0) \, d \bar{u} = \alpha \bar{\lambda} := \alpha \min_{x \in \bar{\Omega}} \varphi_{\bar u, \sigma}(x,0) ,
		\label{eq:lambdabardisc}
		\end{equation}
		where $\bar{\lambda} \leq 0$.
		\item $\bar{u}$ is the solution of \eqref{eq:Palphaplus} if and only if
		\begin{equation}
		\supp(\bar u) \subset \{ x \in \bar{\Omega} : \varphi_{\bar u, \sigma}(x,0) = \bar{\lambda} \}.
		\label{eq:suppposdisc}
		\end{equation} 
	\end{enumerate}
\end{theorem}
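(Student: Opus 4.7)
The plan is to mimic the proof of Theorem \ref{thm:optconddisc} but specialize the test set to $U^+_\alpha$. Because the discrete state equation \eqref{eq:dse} is affine in $u$ and $J_\sigma$ is quadratic and convex, and $U^+_\alpha$ is convex and closed, the first-order variational inequality will be both necessary and sufficient for optimality. The central computation is to identify the Gâteaux derivative: for arbitrary $u \in U^+_\alpha$, write $z_{(u-\bar u),\sigma}$ for the solution of \eqref{eq:dse} with $f \equiv 0$ and initial datum $u-\bar u$, and use Lemma \ref{lem:2.6} to obtain
\begin{equation*}
\lim_{\rho \searrow 0}\frac{J_\sigma(\bar u + \rho(u-\bar u))-J_\sigma(\bar u)}{\rho}
= \int_\Omega (y_{\bar u,\sigma}(T)-y_d)\,z_{(u-\bar u),\sigma}(T)\,dx
= \int_{\bar\Omega} \varphi_{\bar u,\sigma}(0)\,d(u-\bar u).
\end{equation*}
Nonnegativity of this expression for all $u \in U^+_\alpha$ is exactly \eqref{eq:intineqdisc}, and by convexity this is equivalent to optimality.

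Next, I specialize to $\bar u(\bar\Omega)=\alpha$. The inequality \eqref{eq:intineqdisc} can be rewritten as $\int_\Omega \varphi_{\bar u,\sigma}(0)\,d\bar u = \inf_{u \in U^+_\alpha} \int_\Omega \varphi_{\bar u,\sigma}(0)\,du$. Because $\varphi_{\bar u,\sigma}(0)$ is continuous on $\bar\Omega$, its minimum $\bar\lambda := \min_{x\in\bar\Omega}\varphi_{\bar u,\sigma}(x,0)$ is attained at some $x^\star$, and for any positive measure $u$ with $u(\bar\Omega)\le\alpha$ one has $\int_\Omega \varphi_{\bar u,\sigma}(0)\,du \ge \bar\lambda\, u(\bar\Omega) \ge \min(0,\alpha\bar\lambda)$. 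Testing with the admissible choices $u=0$ and $u=\alpha\,\delta_{x^\star}$ (both in $U^+_\alpha$) gives $\int_\Omega \varphi_{\bar u,\sigma}(0)\,d\bar u \le 0$ and $\int_\Omega \varphi_{\bar u,\sigma}(0)\,d\bar u \le \alpha\bar\lambda$ respectively, while the assumption $\bar u(\bar\Omega)=\alpha$ combined with $\varphi_{\bar u,\sigma}(0)\ge\bar\lambda$ yields the reverse estimate $\int_\Omega \varphi_{\bar u,\sigma}(0)\,d\bar u \ge \alpha\bar\lambda$. Equality \eqref{eq:lambdabardisc} follows, and $\bar\lambda\le 0$ is forced by the $u=0$ test together with $\alpha>0$.

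For the support characterization, starting from the identity \eqref{eq:lambdabardisc} and $\bar u(\bar\Omega)=\alpha$, I rewrite
\begin{equation*}
\int_{\bar\Omega} \bigl(\varphi_{\bar u,\sigma}(x,0)-\bar\lambda\bigr)\,d\bar u = 0.
\end{equation*}
The integrand is nonnegative and $\bar u$ is a positive measure, so $\varphi_{\bar u,\sigma}(\cdot,0)=\bar\lambda$ holds $\bar u$-a.e., which is exactly \eqref{eq:suppposdisc}. Conversely, if \eqref{eq:suppposdisc} holds then $\int_\Omega \varphi_{\bar u,\sigma}(0)\,d\bar u = \alpha\bar\lambda \le \int_\Omega \varphi_{\bar u,\sigma}(0)\,du$ for all $u\in U^+_\alpha$ (the inequality coming from $\varphi_{\bar u,\sigma}(0)\ge\bar\lambda$ and $u(\bar\Omega)\le\alpha$ together with $\bar\lambda\le 0$), so \eqref{eq:intineqdisc} is satisfied and $\bar u$ is optimal.

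The proof is essentially parallel to the continuous Theorem \ref{thm:optcondplus}; the main technical ingredient particular to the discrete setting is the duality identity of Lemma \ref{lem:2.6}, which lets the derivative of $J_\sigma$ be expressed directly against $\varphi_{\bar u,\sigma}(0)$. The subtlest point is handling the sign of $\bar\lambda$: one must use both the admissibility of $u=0$ (giving $\bar\lambda\le 0$) and of the Dirac $\alpha\delta_{x^\star}$ (giving the sharp lower bound) to close the two-sided estimate, after which the support assertion is a standard measure-theoretic argument.
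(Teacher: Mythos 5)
Your argument is correct and, in its overall skeleton, matches the paper's: the variational inequality \eqref{eq:intineqdisc} is obtained exactly as in the paper via the directional derivative and Lemma \ref{lem:2.6}, and part 1 is closed by the same two test measures $u=0$ and $u=\alpha\delta_{x^\star}$ (the paper derives $\bar\lambda\le 0$ first by contradiction and then notes that $\alpha\delta_{x_0}$ attains the minimum, whereas you assemble the two-sided estimate $\alpha\bar\lambda \le \int\varphi_{\bar u,\sigma}(0)\,d\bar u\le\alpha\bar\lambda$ directly; these are the same computation). The one place where you genuinely deviate is the forward direction of part 2: the paper splits into the cases $\bar\lambda=0$ and $\bar\lambda<0$, and for the latter introduces the auxiliary function $\psi(x)=-\min\{\varphi_{\bar u,\sigma}(x,0),0\}$, shows $\int\psi\,d\bar u=\|\bar u\|_{\mathcal M(\bar\Omega)}\|\psi\|_{\mathcal C(\bar\Omega)}$, and then invokes \cite[Lemma 3.4]{CasasClasonKunisch} to read off the support condition. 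You instead observe that \eqref{eq:lambdabardisc} together with $\bar u(\bar\Omega)=\alpha$ gives $\int_{\bar\Omega}(\varphi_{\bar u,\sigma}(x,0)-\bar\lambda)\,d\bar u=0$ with a continuous nonnegative integrand against a positive measure, which forces \eqref{eq:suppposdisc} at once; this is cleaner, self-contained, handles both sign cases uniformly, and avoids the external lemma (it exploits positivity of $\bar u$, which is exactly why the paper's Jordan-decomposition machinery is unnecessary here). The converse directions are handled the same way in both proofs.
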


\begin{proof}
	As in the proof of Theorem \ref{thm:optconddisc}, we get that $\bar u \in U_{\alpha}^+$ solves \eqref{eq:Psigmaplus}, if and only if 
	\begin{equation*}
	J_{\sigma}'(\bar u) (u-\bar u) = \int_{\Omega} \varphi_{\bar u, \sigma}(0) \, d (u - \bar u) \geq 0 \qquad \forall u \in U_{\alpha}^+,
	\end{equation*}
	which is equivalent to the condition \eqref{eq:intineqdisc}.
	
	Now let $\bar u (\bar \Omega) = \alpha$. If $\bar \lambda = \min_{x \in \bar{\Omega}} \varphi_{\bar u, \sigma}(x,0) >0$, then take $u=0 \in U_{\alpha}^+$ in \eqref{eq:intineqdisc} to see that in this case $\bar u = 0$ must hold. So we must have $\bar \lambda \leq 0$. Furthermore, we can equivalently write \eqref{eq:intineqdisc} as 
	\begin{equation*}
	\int_{\Omega} \varphi_{\bar u, \sigma}(x,0) \, d \bar u = \min_{u \in U_{\alpha}^+} \int_{\Omega} \varphi_{\bar u, \sigma} (x,0) \, d u.
	\end{equation*}
	Take $x_0 \in \bar \Omega$, such that $\varphi_{\bar u,\sigma}(x_0,0) = \bar \lambda$. Then $u = \alpha \delta_{x_0}$ achieves the minimum in the equation above and we get \eqref{eq:lambdabardisc}. The other direction of the equivalence is obvious and completes the proof of part 1.
	
	In order to prove part 2, we look at two cases. First, let $\bar \lambda = 0$. By definition of $\bar \lambda$ this implies that $\varphi_{\bar u, \sigma} \geq 0 $ for all $x \in \bar \Omega$. So with \eqref{eq:lambdabardisc} we get that $\bar u$ has support, where $\varphi_{\bar u, \sigma}(x,0)= 0 = \bar \lambda$, in order for the integral to be zero. 
	
	The second case is $\bar \lambda <0$. Define $\psi(x):= - \min \left\{ \varphi_{\bar u, \sigma}(x,0), 0 \right\}$, then it holds $0 \leq \psi(x) \leq - \bar \lambda$ by definition of $\psi(x)$ and $\bar \lambda$. Furthermore $\| \psi \|_{\mathcal{C}(\bar \Omega)} = - \bar \lambda$. With \eqref{eq:intineqdisc} and $\psi(x) \geq - \varphi_{\bar u, \sigma}(x,0)$, we find
	\begin{equation*}
	\int_{\Omega}\psi(x) \, d \bar u \geq - \int_{\Omega} \varphi_{\bar u,\sigma}(x,0) \, d \bar u \geq - \int_{\Omega} \varphi_{\bar u, \sigma}(x,0) \, d u \qquad \forall u \in U_{\alpha}^+.
	\end{equation*}
	Especially for $u = \alpha \delta_{x_0}$, we have
	\begin{equation*}
	\int_{\Omega}\psi(x) \, d \bar u \geq - \int_{\Omega} \varphi_{\bar u, \sigma}(x,0) \, d (\alpha \delta_{x_0}) = - \alpha \bar \lambda = \| \bar u \|_{\mathcal{M}(\bar \Omega)} \| \psi\|_{\mathcal{C}(\bar \Omega)}.
	\end{equation*}
	Furthermore, we obviously have 
	\begin{equation*}
	\int_{\Omega}\psi(x) \, d \bar u \leq  \| \bar u \|_{\mathcal{M}(\bar \Omega)} \| \psi\|_{\mathcal{C}(\bar \Omega)},
	\end{equation*}
	so we can deduce equality and by \cite[Lemma 3.4]{CasasClasonKunisch} we then get \eqref{eq:suppposdisc}.
	
	The converse implication can be seen, since for a positive control $\bar u \in U_{\alpha}^+$ with $\bar u (\bar \Omega) = \alpha$, we can follow \eqref{eq:intineqdisc} from the condition \eqref{eq:suppposdisc}.
	
\end{proof}

\section{Numerical results}
\label{sec:Num}

For the implementation we consider $b \equiv 0$ and $c \equiv 0 $ in \eqref{eq:operatorA}.

We will consider the case of positive sources first, since the implementation is straightforward, while the general case requires to handle absolute values in the constraints. 

\subsection{Positive sources (problem \eqref{eq:Psigmaplus})}  
\label{subsec:pos}

We recall the discrete state equation \eqref{eq:dse}, which reduces to the following form, since $b \equiv 0$ and $c \equiv 0 $, with $z_h \in Y_h$: 
\begin{equation*}
\begin{cases}
\left( y_{k,h}-y_{k-1,h} , z_h \right)_{L^2} + \tau_k \int_{\Omega}{\nabla y_{k,h} \nabla z_h \, dx}    =  \int_{I_k}\int_{\Omega} f\, z_h \, dx\, dt, \\
y_{0,h}=y_{0h},
\end{cases}
\end{equation*}
where $y_{0h} \in Y_h$, for given $u \in \mathcal{M}(\bar \Omega)$, is the unique element satisfying:
\begin{equation*}
(y_{0h},z_h) = \int_{\Omega}{z_h \, d u} \qquad \forall \, z_h \in Y_h.
\end{equation*}
We define the mass matrix $M_h := \left( \left( \phi_{j}, \phi_{k} \right)_{L^2}\right)_{j,k=1}^{N_h}$ and the stiffness matrix $A_h := \left( \int_{\bar{\Omega}} \nabla \phi_{j} \nabla \phi_{k} \right)_{j,k=1}^{N_h}$ corresponding to $Y_h$.

We also notice that the matrix $ \left(\phi_{j}, \delta_{x_k}\right)_{j,k=1}^{N_h}$ is the identity in $\mathbb{R}^{N_h \times N_h}$. We represent the discrete state equation by the following operator $L: \mathbb{R}^{N_{\sigma}} \rightarrow \mathbb{R}^{N_{\sigma}} $:
\begin{align}
\underbrace{
	\begin{pmatrix} & M_h &   & &0 \\
	&-M_h & M_h +\tau_1 A_h & &\\
	& &\ddots &\ddots & \\
	%&\vdots   & &\ddots &\ddots &0\\
	&0 & &-M_h &M_h +\tau_{N_{\tau}}A_h  
	\end{pmatrix} }_{=:L}
\begin{pmatrix}
y_{0,h} \\ y_{1,h} \\ \vdots \\ y_{N_{\tau},h} 
\end{pmatrix}  
= 
\begin{pmatrix}
u \\ \tau_1 M_h f_{1,h} \\ \vdots \\ \tau_{N_{\tau}} M_h f_{N_{\tau},h} 
\end{pmatrix} .
\label{eq:statematrix}
\end{align}

We can now formulate the following finite-dimensional formulation of the discrete problem \eqref{eq:Psigmaplus}:
\begin{align}
\min_{u \in \mathbb{R}^{N_h}} J(u) = \frac{1}{2} &\left(y_{N_{\tau},h}(u) - y_d \right)^{\top} M_h \left(y_{N_{\tau},h}(u) - y_d \right),\label{eq:Palphadiscreteplus} \tag{$P^+_{h}$}\\
\textrm{s.t.} \qquad \sum_{i=1}^{N_h} u_i - \alpha &\leq 0, \notag\\
- u_i &\leq 0, \qquad \forall \, i \in \{1,\ldots,N_h\}. \notag 
\end{align}
The corresponding Lagrangian function $\mathcal{L}(u,\mu^{(1)},\mu^{(2)})$ with $\mu^{(1)} \in \mathbb{R}, \mu^{(2)}\in \mathbb{R}^{N_h}$ is defined by 
\begin{equation*}
\mathcal{L}(u,\mu^{(1)},\mu^{(2)}) := J(u) + \mu^{(1)} \left(\sum_{i=1}^{N_h} u_i - \alpha \right) - \sum_{i=1}^{N_h} \mu_i^{(2)} u_i .
\end{equation*}
All inequalities in \eqref{eq:Palphadiscreteplus} are strictly fulfilled for $u_i = \frac{\alpha}{N_h+1}$ for all $i \in \{1,\ldots,N_h\}$, thus an interior point of the feasible set exists, and the Slater condition is satisfied (see e.g. \cite[(1.132)]{HPUU}). Then the Karush-Kuhn-Tucker conditions (see e.g. \cite[(5.49)]{Boyd}) state that at the minimum $u$ the following conditions hold:
\begin{enumerate}
	\item $\partial_u \mathcal{L}(u,\mu^{(1)},\mu^{(2)}) = 0$,
	\item $\mu^{(1)} \left(\sum_{i=1}^{N_h} u_i - \alpha \right)  = 0 \; \wedge \; \mu^{(1)} \geq 0 \; \wedge \; \left(\sum_{i=1}^{N_h} u_i - \alpha \right) \leq 0$,
	\item $-\mu_i^{(2)} u_i = 0 \; \wedge \; \mu_i^{(2)} \geq 0 \; \wedge \; -u_i \leq 0 \quad \forall \, i \in \{1,\ldots,N_h\} $,
\end{enumerate}
where 2. and 3. can be equivalently reformulated with an arbitrary $\kappa>0$ by 

\begin{align*}
N^{(1)}(u,\mu^{(1)}) &:= \max \{ 0, \mu^{(1)} + \kappa \left(\textstyle\sum_{i=1}^{N_h} u_i - \alpha \right)  \} - \mu^{(1)} = 0, \\
N^{(2)}(u,\mu^{(2)}) & := \max \{ 0, \mu^{(2)} - \kappa u  \} - \mu^{(2)}  = 0 .
\end{align*}

We define 
\begin{equation*}
F(u,\mu^{(1)},\mu^{(2)}) := \begin{pmatrix}
& \partial_u \mathcal{L}(u,\mu^{(1)},\mu^{(2)}) & N^{(1)}(u,\mu^{(1)}) & N^{(2)}(u,\mu^{(2)}) 
\end{pmatrix}^{\top},
\end{equation*}
and apply the semismooth Newton method to solve $F(u,\mu^{(1)},\mu^{(2)}) = 0$. We have
\begin{equation*}
\partial_u \mathcal{L}(u,\mu^{(1)},\mu^{(2)}) = \partial_u J(u) + \mu^{(1)} \mathbb{1}_{N_h} - \mu^{(2)}.
\end{equation*} 
When setting up the matrix $DF = DF(u,\mu^{(1)},\mu^{(2)})$, we always choose $\partial_x (\max \{0,g(x)\}) = \partial_x g(x)$ if $g(x) =0$. This delivers 
%\begin{equation*}
%DF(u,\mu^{(1)},\mu^{(2)}) := \begin{pmatrix}
%&\partial^2_u \mathcal{L}(u,\mu^{(1)},\mu^{(2)}) & \partial_{\mu^{(1)}} (\partial_u \mathcal{L}(u,\mu^{(1)},\mu^{(2)})) & \partial_{\mu^{(2)}}(\partial_u \mathcal{L}(u,\mu^{(1)},\mu^{(2)}))\\
%& \partial_u N^{(1)}(u,\mu^{(1)}) & \partial_{\mu^{(1)}} N^{(1)}(u,\mu^{(1)}) & \partial_{\mu^{(2)}} N^{(1)}(u,\mu^{(1)})\\
%& \partial_{u}  N^{(2)}(u,\mu^{(2)})  & \partial_{\mu^{(1)}} N^{(2)}(u,\mu^{(2)}) & \partial_{\mu^{(2)}}  N^{(2)}(u,\mu^{(2)}) 
%\end{pmatrix},
%\end{equation*}
\begin{equation*}
DF := \begin{pmatrix}
&\partial^2_u \mathcal{L}(u,\mu^{(1)},\mu^{(2)}) & \partial_{\mu^{(1)}} (\partial_u \mathcal{L}(u,\mu^{(1)},\mu^{(2)})) & \partial_{\mu^{(2)}}(\partial_u \mathcal{L}(u,\mu^{(1)},\mu^{(2)}))\\
& \partial_u N^{(1)}(u,\mu^{(1)}) & \partial_{\mu^{(1)}} N^{(1)}(u,\mu^{(1)}) & 0\\
& \partial_{u}  N^{(2)}(u,\mu^{(2)})  & 0 & \partial_{\mu^{(2)}}  N^{(2)}(u,\mu^{(2)}) 
\end{pmatrix},
\end{equation*}
with the entries: 
\begin{align*}
\partial^2_u \mathcal{L}(u,\mu^{(1)},\mu^{(2)}) &= \partial^2_u J(u),\\
\partial_{\mu^{(1)}} (\partial_u \mathcal{L}(u,\mu^{(1)},\mu^{(2)})) &= \mathbb{1}_{N_h},\\
\partial_{\mu^{(2)}}(\partial_u \mathcal{L}(u,\mu^{(1)},\mu^{(2)})) &= -\mathbb{1}_{N_h \times N_h},\\
\partial_u N^{(1)}(u,\mu^{(1)}) &= \begin{cases}
\kappa \, \mathbb{1}_{N_h}^{\top} , \quad &\mu^{(1)}+ \kappa \left(\sum_{i=1}^{N_h} u_i - \alpha \right) \geq 0, \\
0, \quad &\textrm{else},
\end{cases}\\ 
\partial_{\mu^{(1)}} N^{(1)}(u,\mu^{(1)}) &= \begin{cases}
0 , \quad &\mu^{(1)}+ \kappa \left(\sum_{i=1}^{N_h} u_i - \alpha \right) \geq 0, \\
-1, \quad &\textrm{else},
\end{cases}\\ 
%\partial_{\mu^{(2)}} N^{(1)}(u,\mu^{(1)}) &= \mathbb{0}_{N_h}^{\top} \\
\partial_{u_j}  N_i^{(2)}(u,\mu^{(2)}) &= \begin{cases}
- \kappa\,\delta_{ij} , \quad &\mu_i^{(2)} - \kappa u_i \geq 0,\\
0, &\textrm{else} ,
\end{cases}
\\ 
%\partial_{\mu^{(1)}} N_i^{(2)}(u,\mu^{(2)})  &= \mathbb{0}_{N_h} \\ 
\partial_{\mu_j^{(2)}} N_i^{(2)}(u,\mu^{(2)}) &= \begin{cases}
0, \quad &\mu_i^{(2)} - \kappa u_i \geq 0,\\
-\delta_{ij}, &\textrm{else} .
\end{cases}
\end{align*}

\textbf{Numerical example}\\

Let $\Omega = [0,1]$, $T =1$ and $a = \frac{1}{100}$. We are working on an equidistant $20 \times 20$ grid for this example.

To generate a desired state $y_d$, we choose $u_{\operatorname{true}} = \delta_{0.5}$ and $f \equiv 0$ , solve the state equation on a very fine grid ($1000 \times 1000$) and take the evaluation of the result in $t = T$ on the current grid $\Omega_h$ as desired state $y_d$ (see Figure 1). 
Now we can insert this $y_d$ into our problem and solve for different values of $\alpha$. Knowing the true solution $u_{\operatorname{true}}$, we can compare our results to it. We also know $u_{\operatorname{true}}(\Omega) = 1$ and $\supp(u_{\operatorname{true}}) = \{0.5\}$. We always start the algorithm with the control being identically zero and terminate when the residual is below $10^{-15}$.

\newlength{\imgwidth}
\setlength{\imgwidth}{0.25\textwidth}
\begin{figure}[ht]
	\begin{center}
		\setlength{\tabcolsep}{1pt}
		\begin{tabular}{|c|c|c|}
			\hline
			\raisebox{-1\height}{ \includegraphics[width=\imgwidth]{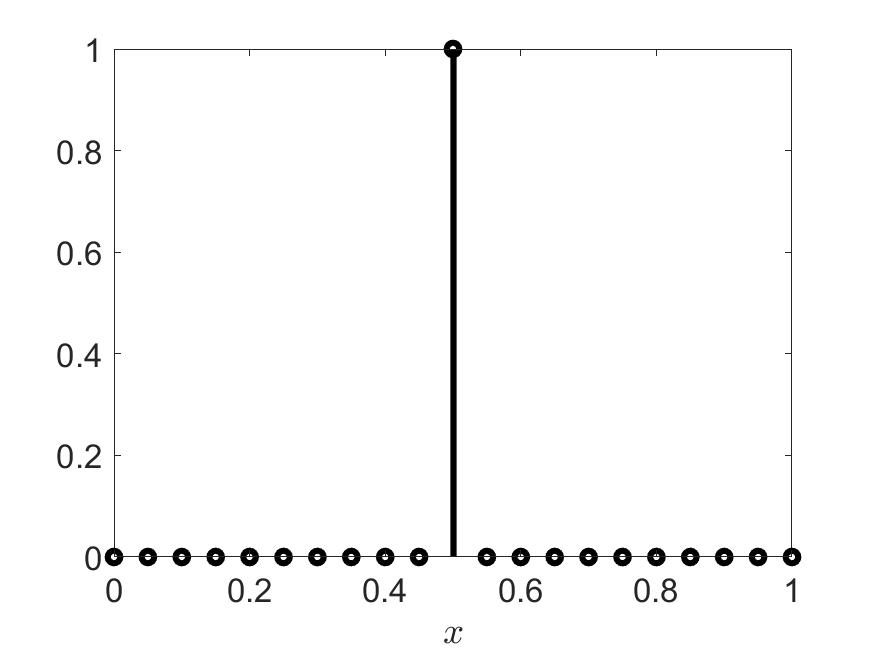}}
			&\raisebox{-1\height}{ \includegraphics[width=\imgwidth]{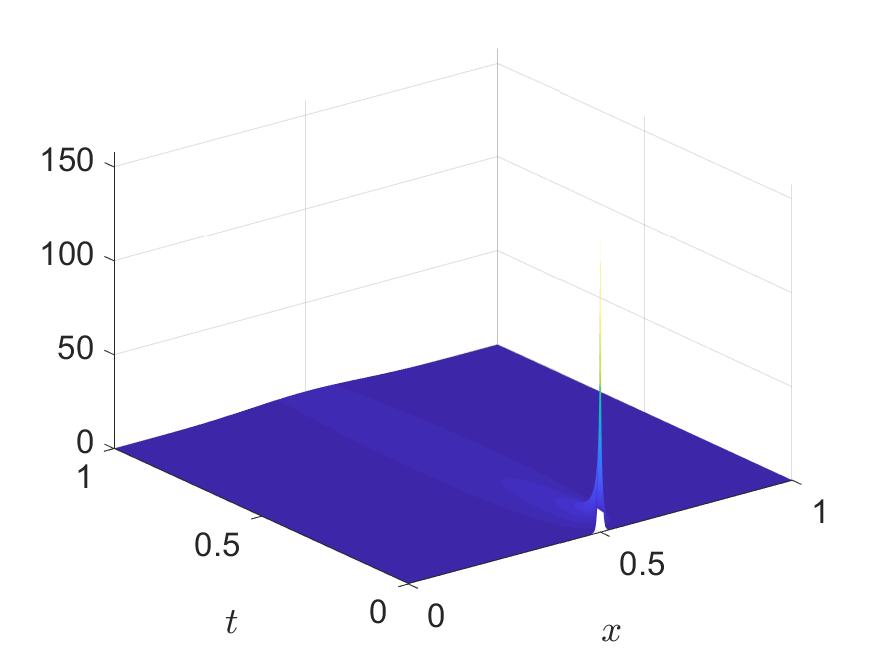}}
			&\raisebox{-1\height}{ \includegraphics[width=\imgwidth]{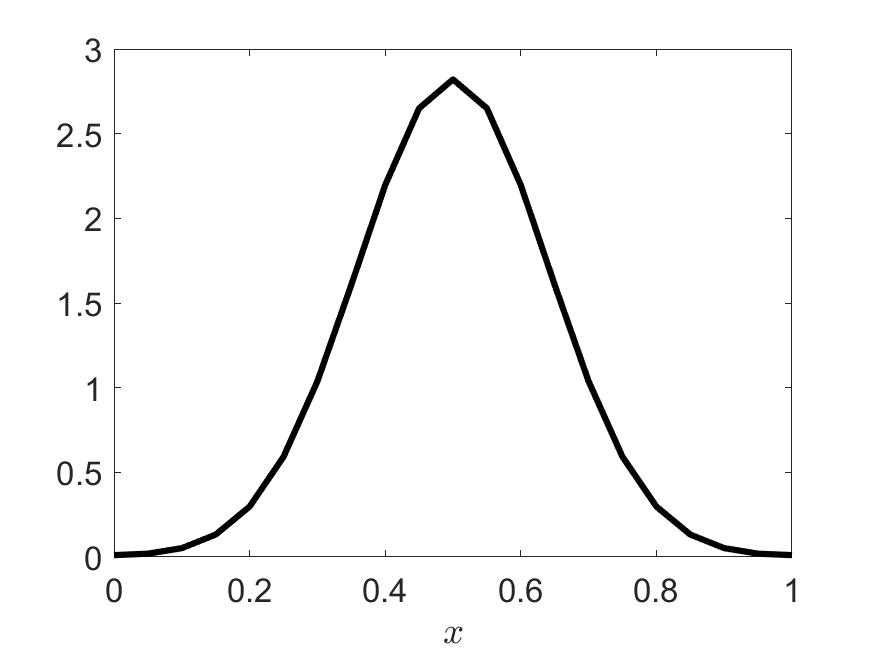}}
			\\
			\hline
		\end{tabular}
	\end{center}
	\caption{From left to right: true solution $u_{\operatorname{true}}$, associated true state $y_{\operatorname{true}}$ in $Q =[0,1] \times [0,1]$ and desired state $y_d = y_{\operatorname{true}}(T)$.}	
	\label{fig:1}
\end{figure}

The first case we investigate is $\alpha = 0.1$ (see Figure 2). This $\alpha$ is smaller than the total variation of the true control and we observe $\bar{u}(\bar{\Omega}) = \alpha$. Furthermore $\bar{\lambda} = \min_{x \in \bar{\Omega}} \bar{\varphi}(0) \approx -35.859$ and we can verify the optimality conditions \eqref{eq:lambdabardisc} and \eqref{eq:suppposdisc}, since 
\begin{equation*}
\int_{\Omega} \bar{\varphi}_h(x,0) \, d \bar{u} \approx -3.5859 \approx \alpha \bar{\lambda} ,
\end{equation*}
and $\supp(\bar{u}) = \{0.5\}$.

\begin{figure}[ht]
	\begin{center}
		\setlength{\tabcolsep}{1pt}
		\begin{tabular}{|c|c|c|c|}
			\hline	
			\raisebox{-1\height}{\includegraphics[width=\imgwidth]{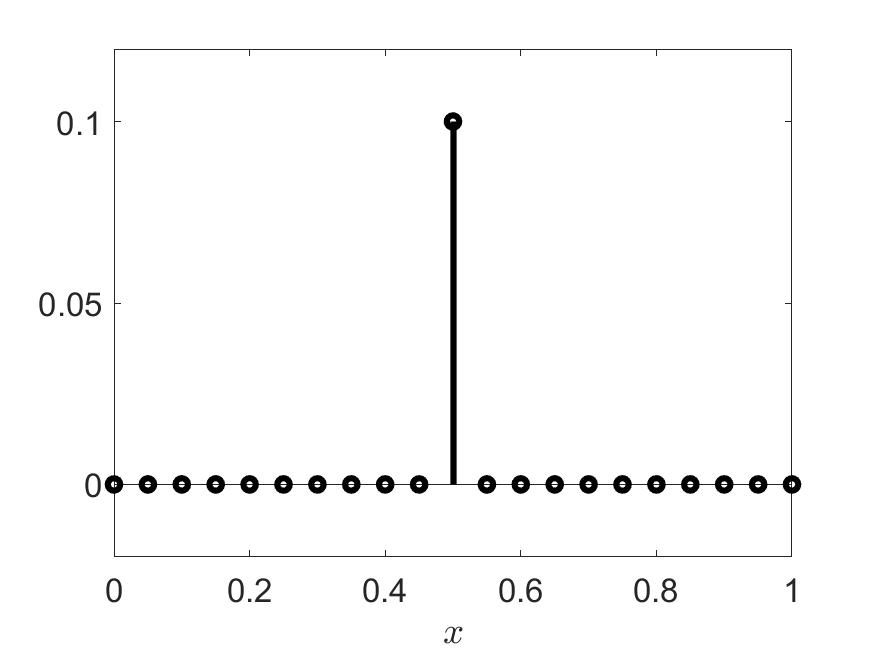}}
			&\raisebox{-1\height}{\includegraphics[width=\imgwidth]{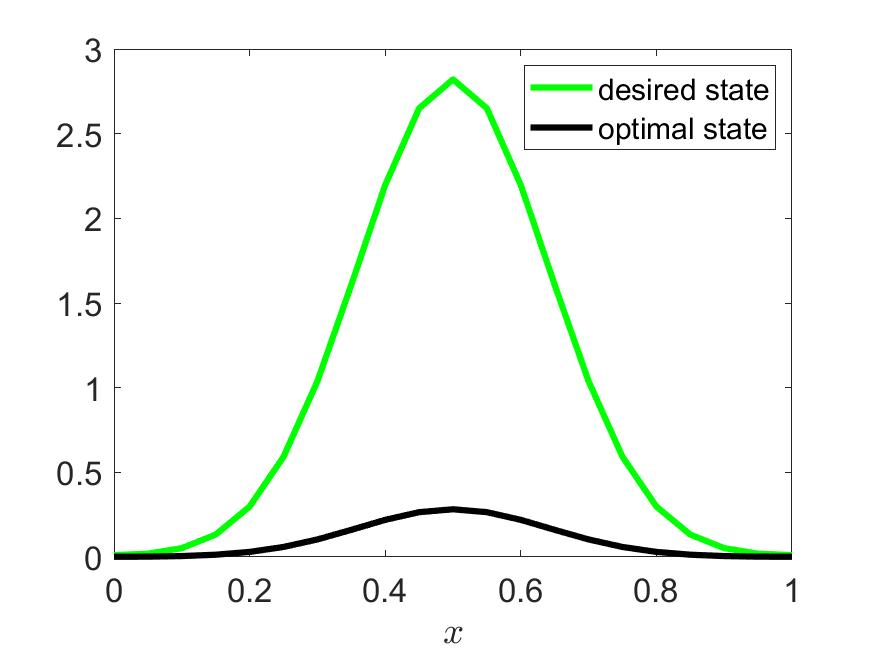}}
			&\raisebox{-1\height}{\includegraphics[width=\imgwidth]{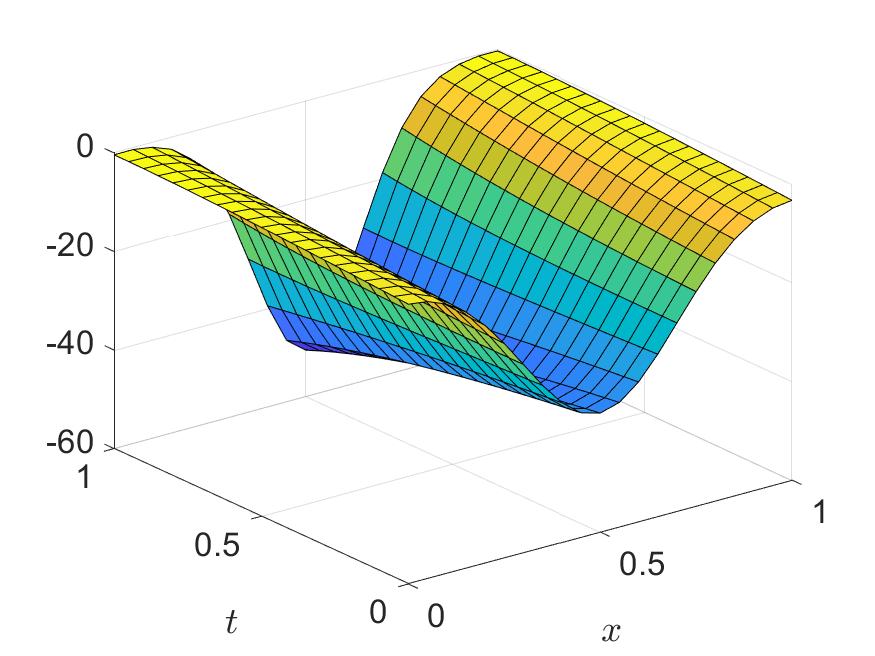}}
			&\raisebox{-1\height}{\includegraphics[width=\imgwidth]{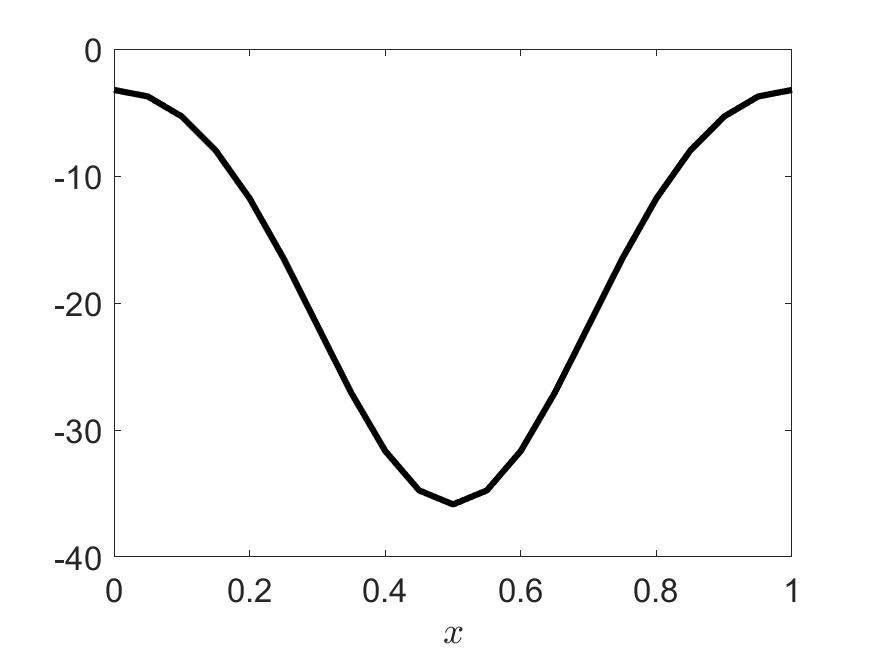}}
			\\
			\hline
		\end{tabular}
	\end{center}
	\caption{Solutions for $\alpha = 0.1$: from left to right: optimal control $\bar{u}$ (solved with the semismooth Newton method), associated optimal state $\bar{y}$, associated adjoint $\bar{\varphi}$ on the whole space-time domain $Q$, associated adjoint $\bar{\varphi}$ at $t=0$. Terminated after 16 Newton steps.}
\end{figure}

The second case we investigate is $\alpha = 1 = u_{\operatorname{true}}(\bar{\Omega})$ (see Figure 3). The computed optimal control in this case has a total variation of $\bar{u}(\bar{\Omega}) = 1 = \alpha$ and we can again verify the sparsity $\supp(\bar{u}) = \{0.5\}$. Furthermore $\bar{\lambda} = \min_{x \in \bar{\Omega}} \bar{\varphi}(0) \approx -0.0436$ and we can verify the optimality condition \eqref{eq:lambdabardisc}, since 
\begin{equation*}
\int_{\Omega} \bar{\varphi}_h(x,0) \, d \bar{u} \approx -0.0436 \approx \alpha \bar{\lambda} .
\end{equation*}

\begin{figure}[ht]
	\begin{center}
		\setlength{\tabcolsep}{1pt}
		\begin{tabular}{|c|c|c|c|}
			\hline	
			\raisebox{-1\height}{\includegraphics[width=\imgwidth]{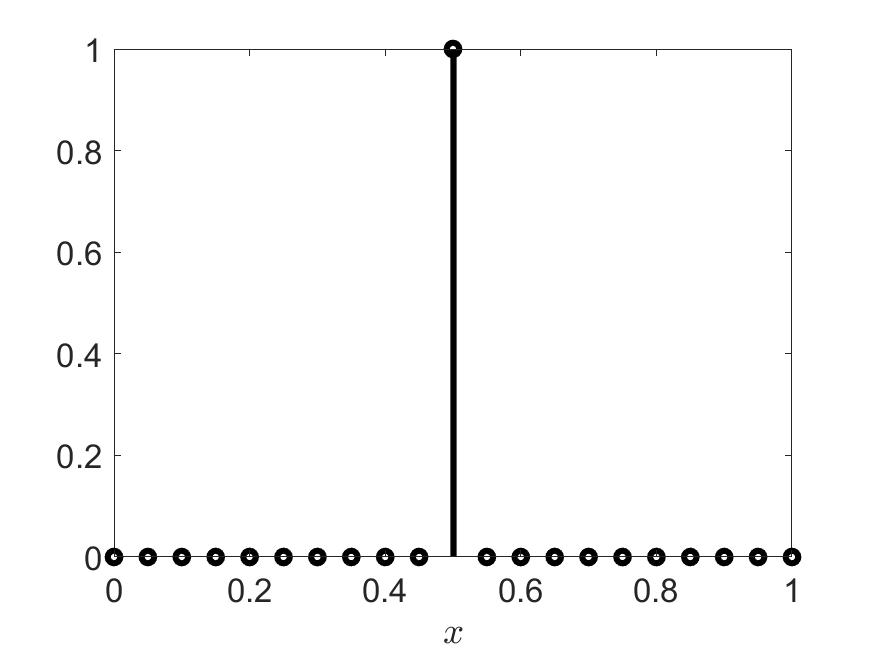}}
			&\raisebox{-1\height}{\includegraphics[width=\imgwidth]{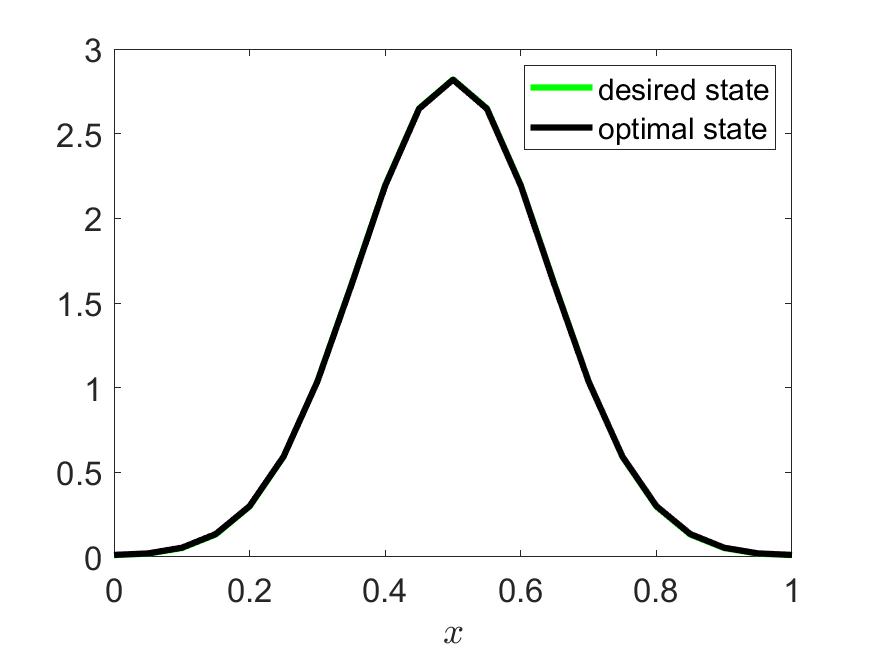}}
			&\raisebox{-1\height}{\includegraphics[width=\imgwidth]{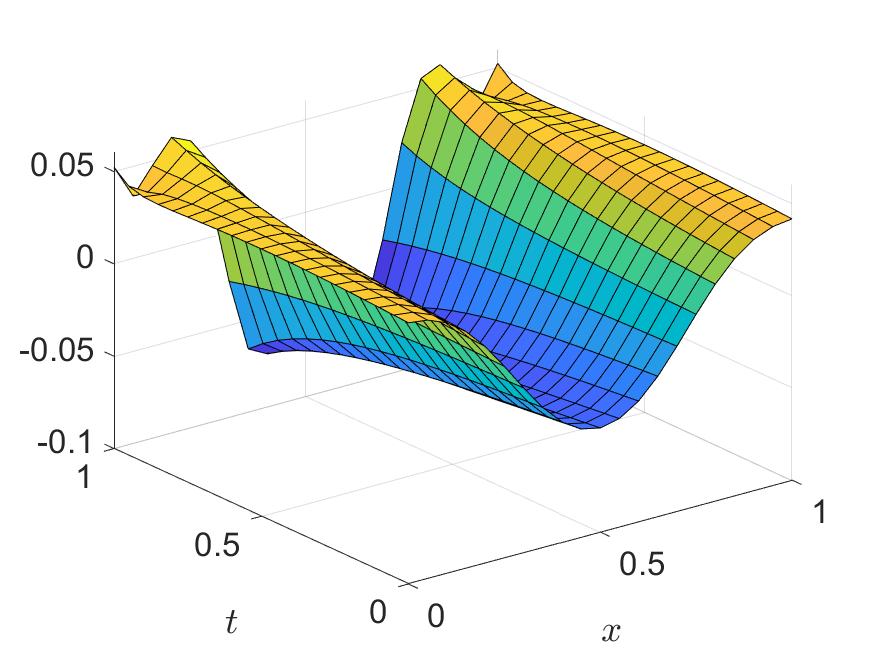}}
			&\raisebox{-1\height}{\includegraphics[width=\imgwidth]{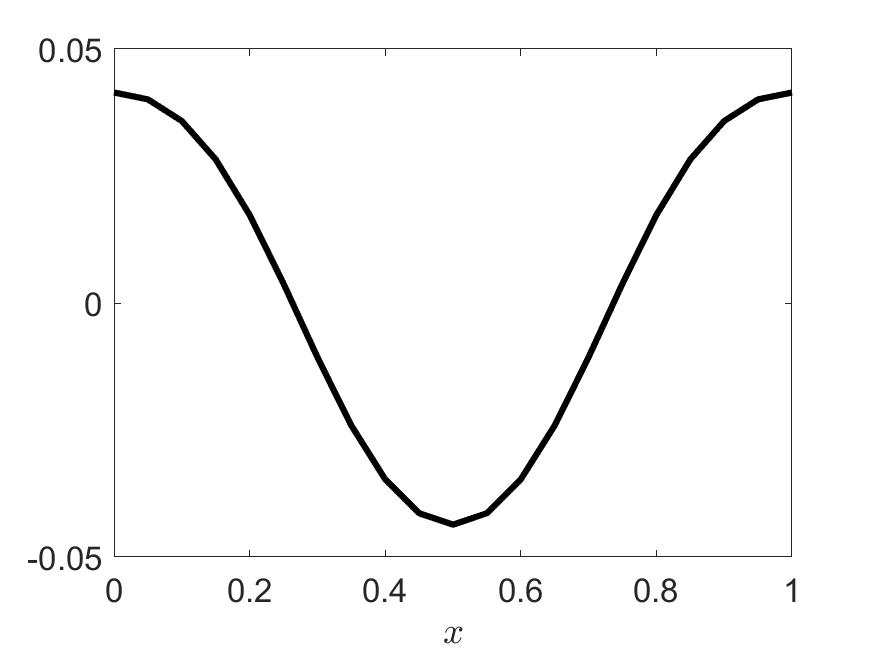}}
			\\
			\hline
		\end{tabular}
	\end{center}
	\caption{Solutions for $\alpha = 1$: from left to right: optimal control $\bar{u}$ (solved with the semismooth Newton method), associated optimal state $\bar{y}$, associated adjoint $\bar{\varphi}$ on the whole space-time domain $Q$, associated adjoint $\bar{\varphi}$ at $t=0$. Terminated after 15 Newton steps. }	
\end{figure}
For cases with $\alpha > u_{\operatorname{true}}(\bar{\Omega})$, we get similar results as in the case with $\alpha = 1$. In particular this means that we observe optimality conditions \eqref{eq:lambdabardisc} and \eqref{eq:suppposdisc}. Since we fixed $f \equiv 0$, we get $y_0(T) \equiv 0$ and therefore $y_d > y_0(T)$. Still, the properties that we found in the general case for $\bar{u}(\bar{\Omega}) < \alpha$: $\bar{y}(T) = y_d$ and $\varphi = 0 \in Q$ can not be observed (compare Figure 4 top). This is caused by the fact that the desired state $y_d$ can not be reached on the coarse grid, so $\bar{y}(T) = y_d$ is not possible. Solving the problem with a desired state that has been projected onto the coarse grid, thus is reachable, delivers the expected properties $\bar{y}(T) = y_d$ and $\varphi = 0 \in Q$ (see Figure 4 bottom).
For examples with $y_d \leq y_0(T)$ we can confirm Remark \ref{remark} and find the optimal solution $\bar{u} = 0$. 
\begin{figure}[ht]
	\begin{center}
		\setlength{\tabcolsep}{1pt}
		\begin{tabular}{|c|c|c|c|}
			\hline	
			\raisebox{-1\height}{\includegraphics[width=\imgwidth]{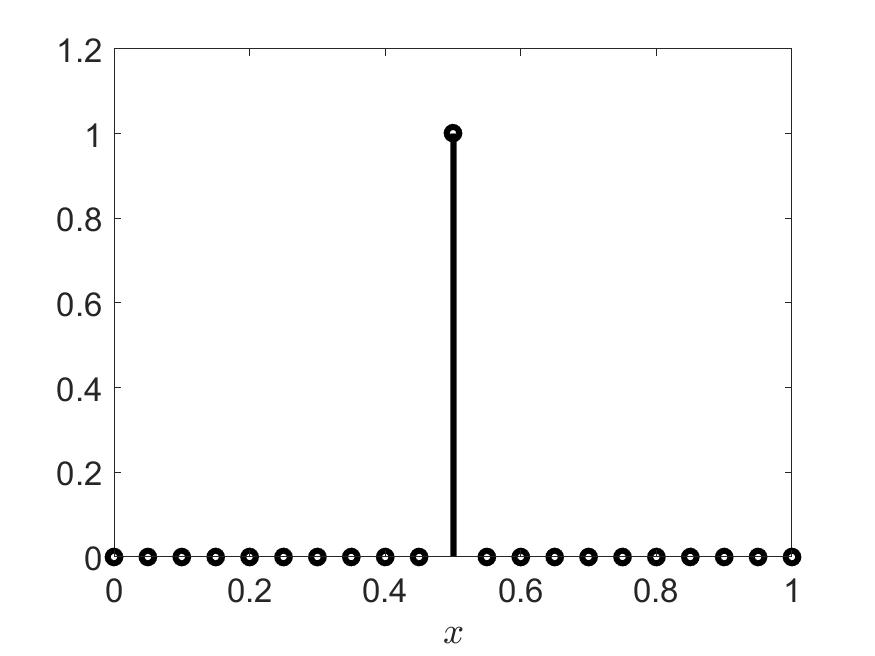}}
			&\raisebox{-1\height}{\includegraphics[width=\imgwidth]{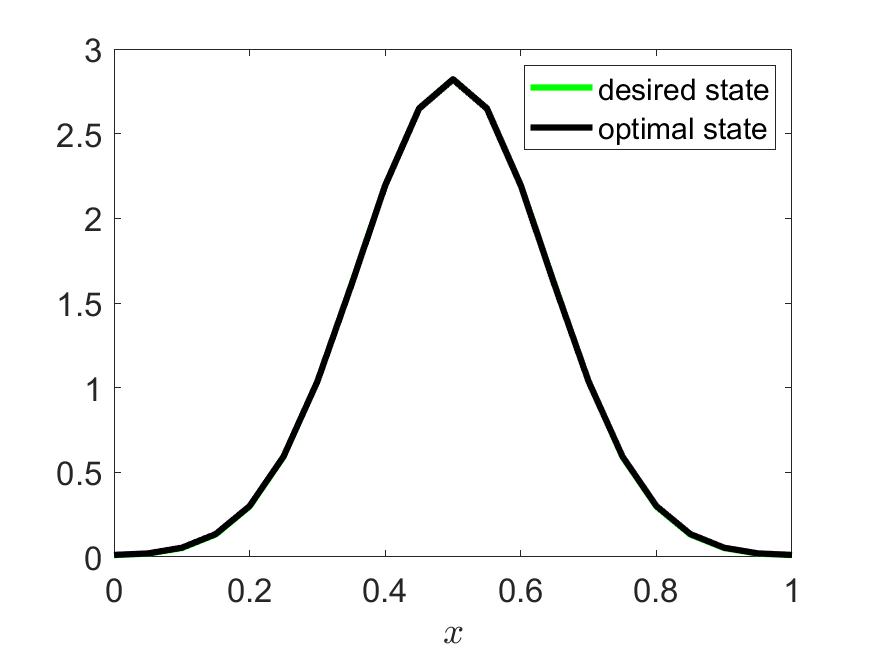}}
			&\raisebox{-1\height}{\includegraphics[width=\imgwidth]{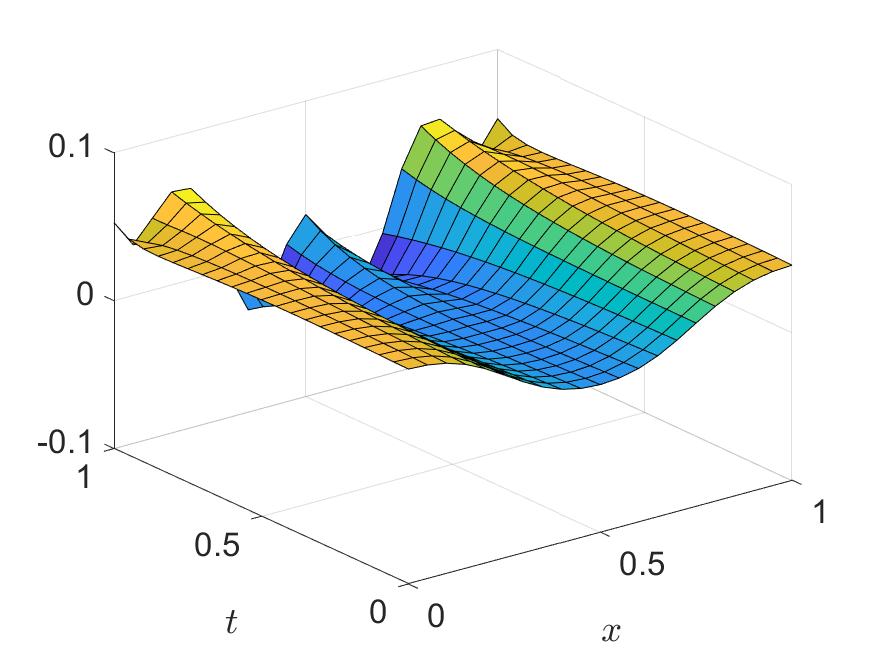}}
			&\raisebox{-1\height}{\includegraphics[width=\imgwidth]{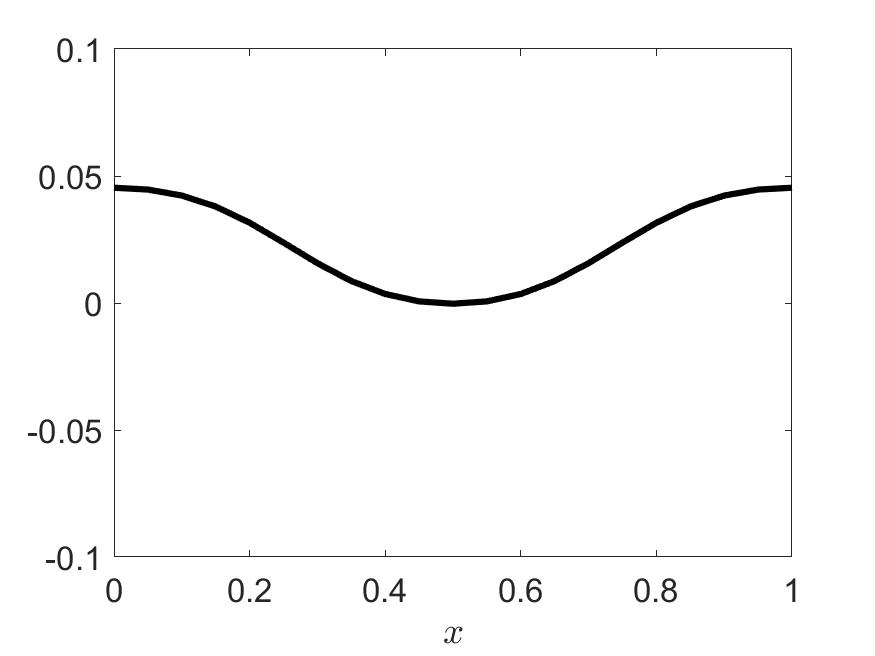}}
			\\
			\hline
			\raisebox{-1\height}{\includegraphics[width=\imgwidth]{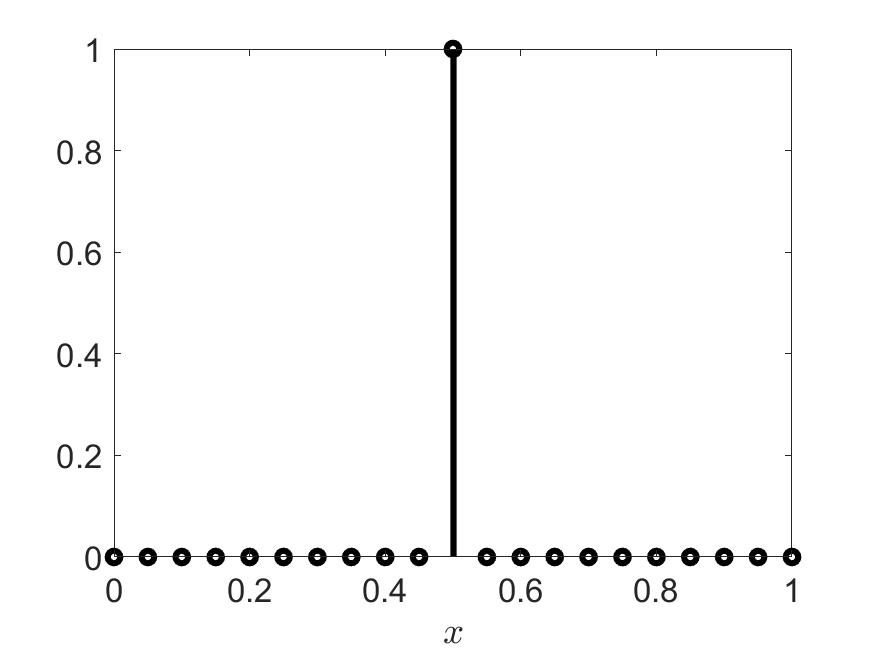}}
			&\raisebox{-1\height}{\includegraphics[width=\imgwidth]{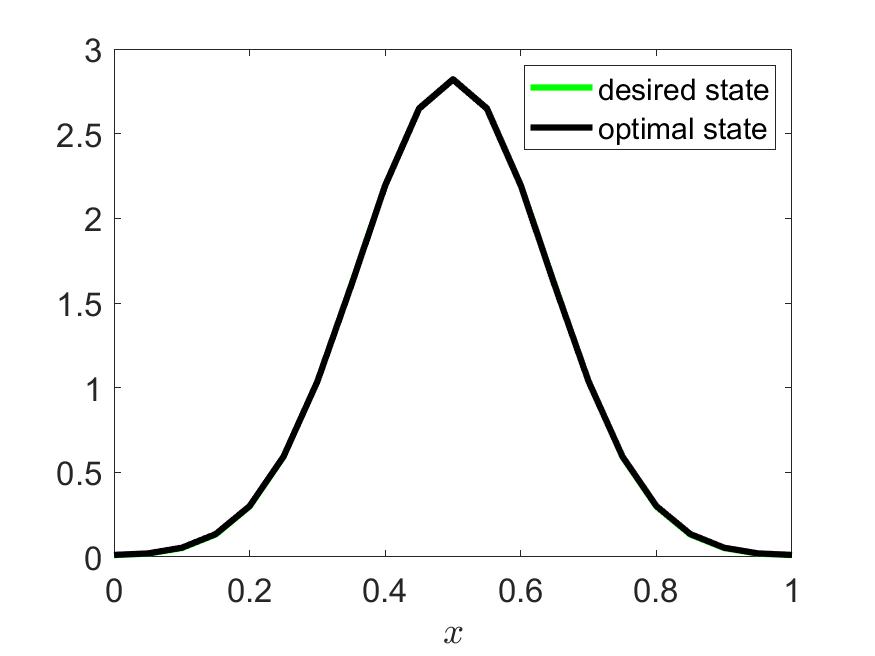}}
			&\raisebox{-1\height}{\includegraphics[width=\imgwidth]{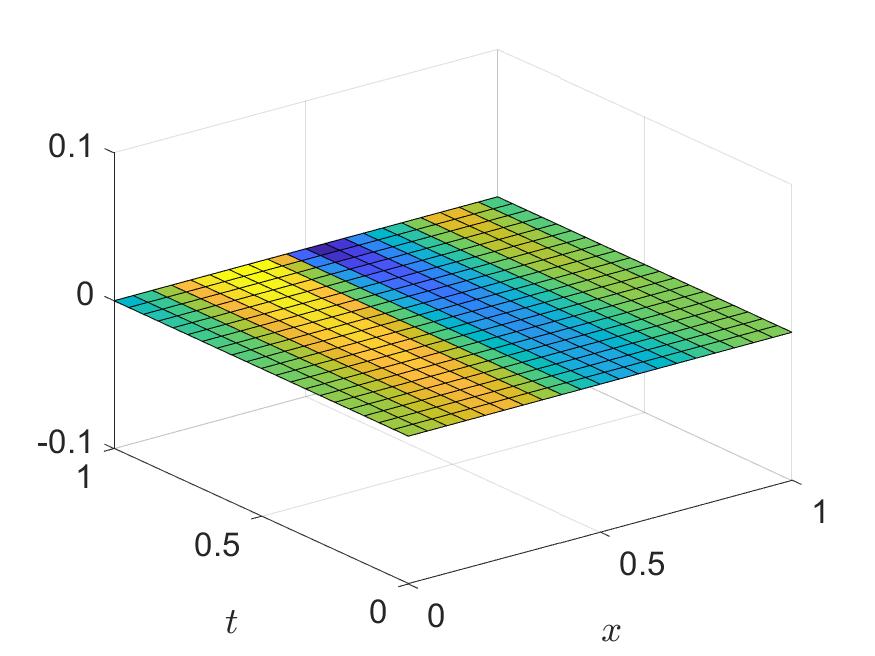}}
			&\raisebox{-1\height}{\includegraphics[width=\imgwidth]{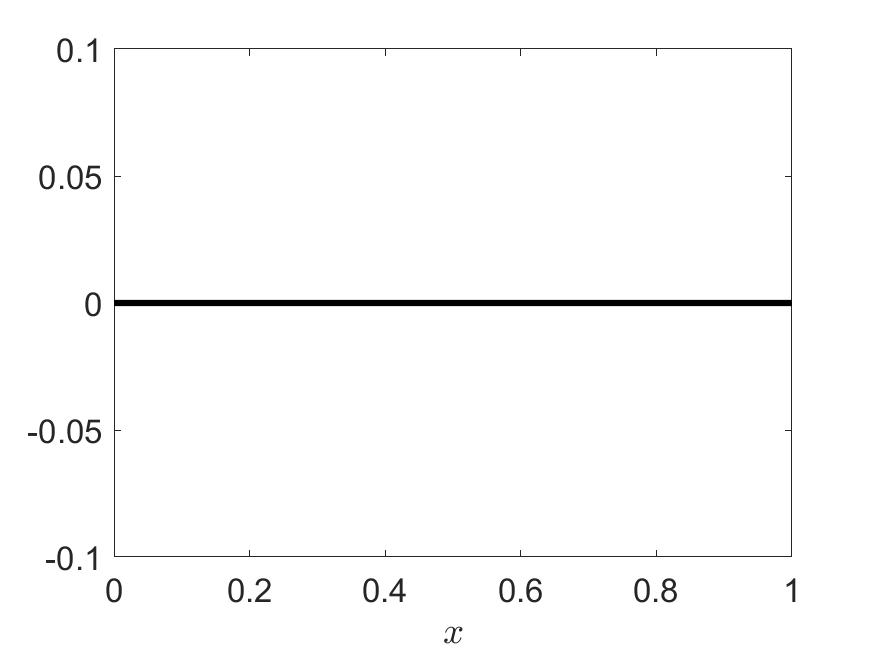}}
			\\
			\hline
		\end{tabular}
	\end{center}
	\caption{Solutions for $\alpha = 2$ with original desired state (top) and reachable desired state (bottom): from left to right: optimal control $\bar{u}$ (solved with the semismooth Newton method), associated optimal state $\bar{y}$, associated adjoint $\bar{\varphi}$ on the whole space-time domain $Q$, associated adjoint $\bar{\varphi}$ at $t=0$. Terminated after 17 and 27 Newton steps, respectively. }	
\end{figure}

\subsection{The general case (problem \eqref{eq:Psigma})}
\label{subsec:gen}
Here, the source does not need to be positive.

In the discrete problem we will decompose the control $u \in U_h$ into its positive and negative part, such that 
\begin{equation*}
u = u^+ - u^-, \qquad u^+ \geq 0, \quad u^- \geq 0.
\end{equation*}

We have the following finite-dimensional formulation of the discrete problem \eqref{eq:Psigma}:
\begin{align}
\min_{u^+,u^- \in \mathbb{R}^{N_h}} J(u^+,u^-) = \frac{1}{2} &\left(y_{N_{\tau},h}(u^+,u^-) - y_d \right)^{\top} M_h \left(y_{N_{\tau},h}(u^+,u^-) - y_d \right), \label{eq:Palphadiscrete} \tag{$P_{h}$}\\
\textrm{s.t.} \qquad \textstyle\sum_{i=1}^{N_h} |u^+_i - u^-_i| - \alpha &\leq 0, \notag \\
-u^+_i &\leq 0 \quad \forall \, i, \notag \\
-u^-_i &\leq 0 \quad \forall \, i, \notag 
\end{align}
where $y_{N_{\tau},h}(u^+,u^-)$ corresponds to solving \eqref{eq:statematrix} with $u = u^+ - u^-$ inserted into the right hand side of the equation. In order to allow taking second derivatives of the Lagrangian, we want to equivalently reformulate the absolute value in the first constraint. This can be done by adding the following constraint in our discrete problem:
\begin{equation}
u^+_i u^-_i = 0 \quad \forall \, i \label{eq:switch}.
\end{equation}
and consequently the first constraint becomes
\begin{equation*}
\left( \textstyle\sum_{i=1}^{N_h} u^+_i + u^-_i \right) - \alpha \leq 0.
\end{equation*} 
However, in the case $u^+_i = u^-_i = 0$, the matrix in the Newton step will be singular. Since we want to handle sparse problems, this case will very likely occur, so we need to find a way to overcome this difficulty. Instead of adding an additional constraint, we could also add a penalty term that enforces $u^+_i u^-_i = 0 \quad \forall \, i$ and consider the problem
\begin{align}
\min_{u^+,u^- \in \mathbb{R}^{N_h}} J(u^+,u^-) + \gamma (u^+)&^{\top}u^-, \label{eq:Palphadiscretepenalty} \tag{$P_{h, \gamma}$}\\
\textrm{s.t.} \qquad \textstyle\sum_{i=1}^{N_h} u^+_i + u^-_i - \alpha &\leq 0, \notag \\
-u^+_i &\leq 0\, \quad \forall \, i, \notag \\
-u^-_i &\leq 0\, \quad \forall \, i. \notag 
\end{align}
% more details and proof: notes from 28.10.19
For $\gamma$ large enough the solutions of \eqref{eq:Palphadiscretepenalty} and \eqref{eq:Palphadiscrete} will coincide. In \cite[Theorem 4.6]{han1979exact} it is specified that $\gamma$ should be larger than the largest absolute value of the Karush-Kuhn-Tucker multipliers corresponding to the equality constraints \eqref{eq:switch}, which are replaced.

We have the corresponding Lagrangian with $\mu^{(1)} \in \mathbb{R}, \mu^{(2)}, \mu^{(3)} \in \mathbb{R}^{N_h}$:
\begin{align*}
\mathcal{L}(u^+,u^-,\mu^{(1)},\mu^{(2)},\mu^{(3)}) := &J(u^+,u^-) + \gamma (u^+)^{\top} u^- + \mu^{(1)} \left(\textstyle\sum_{i=1}^{N_h} u^+_i - u^-_i - \alpha \right) \\
&- \sum_{i=1}^{N_h} \mu_i^{(2)} u_i^+ - \sum_{i=1}^{N_h} \mu_i^{(3)} u_i^-.
\end{align*}

All inequalities in \eqref{eq:Palphadiscretepenalty} are strictly fulfilled for $u_i^+ = u_i^- = \frac{\alpha}{2 (N_h +1)}$ for all $i \in \{ 1, \ldots, N_h\}$, so the Slater condition is satisfied (see e.g. \cite[(1.132)]{HPUU}). By Karush-Kuhn-Tucker conditions (see e.g. \cite[(5.49)]{Boyd}) the following conditions in the minimum $(u^+,u^-)$ must be fulfilled, where we directly reformulate the inequality conditions with an arbitrary $\kappa >0 $ as in the case with positive measures.

\begin{enumerate}
	\item $\partial_{u^+} \mathcal{L}(u^+,u^-,\mu^{(1)},\mu^{(2)},\mu^{(3)}) = 0$,
	\item $\partial_{u^-} \mathcal{L}(u^+,u^-,\mu^{(1)},\mu^{(2)},\mu^{(3)}) = 0$,
	\item $N^{(1)}(u^+,u^-,\mu^{(1)}) = \max \{ 0, \mu^{(1)} + \kappa \left(\sum_{i=1}^{N_h} u^+_i - u^-_i - \alpha \right)  \} - \mu^{(1)} = 0$,
	\item $N^{(2)}(u^+,\mu^{(2)}) = \max \{ 0, \mu^{(2)} - \kappa u^+  \} - \mu^{(2)}  = 0$,
	\item $N^{(3)}(u^-,\mu^{(3)}) = \max \{ 0, \mu^{(3)} - \kappa u^-  \} - \mu^{(3)}  = 0$.
\end{enumerate}
We then apply the semismooth Newton method to solve
\begin{equation*}
F(u^+,u^-,\mu^{(1)},\mu^{(2)},\mu^{(3)}) := \begin{pmatrix}
\partial_{u^+} \mathcal{L}(u^+,u^-,\mu^{(1)},\mu^{(2)},\mu^{(3)}) \\
\partial_{u^-} \mathcal{L}(u^+,u^-,\mu^{(1)},\mu^{(2)},\mu^{(3)}) \\
N^{(1)}(u^+,u^-,\mu^{(1)})\\
N^{(2)}(u^+,\mu^{(2)}) \\
N^{(3)}(u^-,\mu^{(3)})
\end{pmatrix}
= 0.
\end{equation*} 
We have
\begin{align*}
\partial_{u^+} \mathcal{L}(u^+,u^-,\mu^{(1)},\mu^{(2)},\mu^{(3)}) &= \partial_{u^+} J(u^+,u^-) + \gamma u^- + \mu^{(1)} \mathbb{1}_{N_h} - \mu^{(2)} ,\\
\partial_{u^-} \mathcal{L}(u^+,u^-,\mu^{(1)},\mu^{(2)},\mu^{(3)}) &= \partial_{u^-} J(u^+,u^-) + \gamma u^+ + \mu^{(1)} \mathbb{1}_{N_h} - \mu^{(3)} .
\end{align*}

When setting up the matrix $DF$, we always make the choice $\partial_x (\max \{0,g(x)\}) = \partial_x g(x)$ if $g(x) =0$. This delivers (in short notation):
% \begin{equation*}
% DF := \begin{pmatrix}
% &\partial^2_{u^+} \mathcal{L} &\partial_{u^-} \partial_{u^+}\mathcal{L} & \partial_{\mu^{(1)}} \partial_{u^+} \mathcal{L} & \partial_{\mu^{(2)}} \partial_{u^+} \mathcal{L} & \partial_{\mu^{(3)}} \partial_{u^+} \mathcal{L}\\
% &\partial_{u^+} \partial_{u^-} \mathcal{L} &\partial^2_{u^-} \mathcal{L} & \partial_{\mu^{(1)}} \partial_{u^-} \mathcal{L} & \partial_{\mu^{(2)}} \partial_{u^-} \mathcal{L} & \partial_{\mu^{(3)}} \partial_{u^-} \mathcal{L}\\
% & \partial_{u^+} N^{(1)} & \partial_{u^-} N^{(1)} & \partial_{\mu^{(1)}} N^{(1)} & \partial_{\mu^{(2)}} N^{(1)} & \partial_{\mu^{(3)}} N^{(1)}\\
% & \partial_{u^+} N^{(2)} & \partial_{u^-} N^{(2)} & \partial_{\mu^{(1)}} N^{(2)} & \partial_{\mu^{(2)}} N^{(2)} & \partial_{\mu^{(3)}} N^{(2)}\\
%  & \partial_{u^+} N^{(3)} & \partial_{u^-} N^{(3)} & \partial_{\mu^{(1)}} N^{(3)} & \partial_{\mu^{(2)}} N^{(3)} & \partial_{\mu^{(3)}} N^{(3)}
% \end{pmatrix},
% \end{equation*}
%
% with zeros 
\begin{equation*}
DF := \begin{pmatrix}
&\partial^2_{u^+} \mathcal{L} &\partial_{u^-} \partial_{u^+}\mathcal{L} & \partial_{\mu^{(1)}} \partial_{u^+} \mathcal{L} & \partial_{\mu^{(2)}} \partial_{u^+} \mathcal{L} & 0 \\
&\partial_{u^+} \partial_{u^-} \mathcal{L} &\partial^2_{u^-} \mathcal{L} & \partial_{\mu^{(1)}} \partial_{u^-} \mathcal{L} & 0 & \partial_{\mu^{(3)}} \partial_{u^-} \mathcal{L}\\
& \partial_{u^+} N^{(1)} & \partial_{u^-} N^{(1)} & \partial_{\mu^{(1)}} N^{(1)} & 0 & 0\\
& \partial_{u^+} N^{(2)} & 0 & 0 & \partial_{\mu^{(2)}} N^{(2)} & 0 \\
& 0 & \partial_{u^-} N^{(3)} & 0 & 0 & \partial_{\mu^{(3)}} N^{(3)}
\end{pmatrix},
\end{equation*}

with the entries 
\begin{align*}
\partial^2_{u^+} \mathcal{L} &= \partial^2_{u^+} J \notag ,\\
\partial_{u^-} \partial_{u^+}\mathcal{L} = \partial_{u^+} \partial_{u^-} \mathcal{L} &= \partial_{u^-} \partial_{u^+} J + \gamma \mathbb{1}_{N_h \times N_h} , \\ 
\partial_{\mu^{(1)}} \partial_{u^+} \mathcal{L} = \partial_{\mu^{(1)}} \partial_{u^-} \mathcal{L} &= \mathbb{1}_{N_h}  ,\\
\partial_{\mu^{(2)}} \partial_{u^+} \mathcal{L} = \partial_{\mu^{(3)}} \partial_{u^-} \mathcal{L} &= -\mathbb{1}_{N_h \times N_h } \notag,\\
\partial^2_{u^-} \mathcal{L} &= \partial^2_{u^-} J , \\ 
\partial_{u^+} N^{(1)} = \partial_{u^-} N^{(1)} &= \begin{cases}
\kappa \mathbb{1}_{N_h}^{\top} , \quad &\mu^{(1)}+ \kappa \left(\sum_{i=1}^{N_h} u^+_i + u^-_i - \alpha \right) \geq 0 ,\\
0, \quad &\textrm{else},
\end{cases}  \\ 
\partial_{\mu^{(1)}} N^{(1)} &= \begin{cases}
0 , \quad &\mu^{(1)}+ \kappa \left(\sum_{i=1}^{N_h} u^+_i + u^-_i - \alpha \right) \geq 0, \\
-1, \quad &\textrm{else},
\end{cases} \\
\partial_{u^+_j} N_i^{(2)} &= \begin{cases}
-\kappa \delta_{ij}, \quad &\mu_i^{(2)} - \kappa u^+_i \geq 0,\\
0, &\textrm{else} ,
\end{cases} \\
\partial_{\mu_j^{(2)}} N_i^{(2)} &= \begin{cases}
0, \quad &\mu_i^{(2)} - \kappa u^+_i \geq 0,\\
-\delta_{ij}, &\textrm{else} ,
\end{cases}\\
\partial_{u_j^-} N_i^{(3)} &= \begin{cases}
- \kappa \delta_{ij}, \quad &\mu_i^{(3)} - \kappa u^-_i \geq 0,\\
0, &\textrm{else} ,
\end{cases}\\ 
\partial_{\mu_j^{(3)}} N_i^{(3)} &= \begin{cases}
0, \quad &\mu_i^{(3)} - \kappa u^-_i \geq 0,\\
-\delta_{ij}, &\textrm{else} .
\end{cases}
\end{align*}  
\newpage
\textbf{Numerical example}\\

Let $\Omega = [0,1]$, $T =1$ and $a = \frac{1}{100}$. We are working on a $20 \times 20$ grid for this example. Positive parts of the measure are displayed by black circles and negative parts by red diamonds.

We always start the algorithm with the control being identically zero and terminate when the residual is below $10^{-15}$.

First example like described in Section \ref{subsec:pos}, compare Figure \ref{fig:1}. We found the following values to be suitable: The penalty parameter $\gamma = 70$ in \ref{eq:Palphadiscretepenalty} and the multiplier $\kappa =2$ to reformulate the KKT-conditions.

The first case we investigate is $\alpha = 0.1$ (see Figure 5 top). This $\alpha$ is smaller than the total variation of the true control and we observe $\bar{u}^+(\bar{\Omega}) = \alpha, \bar{u}^-(\bar{\Omega}) = 0$. 
The second case we investigate is $\alpha = 1$ (see Figure 5 bottom). This $\alpha$ is equal to the total variation of the true control and we observe $\bar{u}^+(\bar{\Omega}) = \alpha, \bar{u}^-(\bar{\Omega}) = 1.8635 \cdot 10^{-20}$. These results are almost identical to the results in Section \ref{subsec:pos}, where only positive measures were allowed (compare Figure 2 and 3).

\begin{figure}[t]
	\begin{center}
		\setlength{\tabcolsep}{1pt}
		\begin{tabular}{|c|c|c|c|}
			\hline	
			\raisebox{-1\height}{\includegraphics[width=\imgwidth]{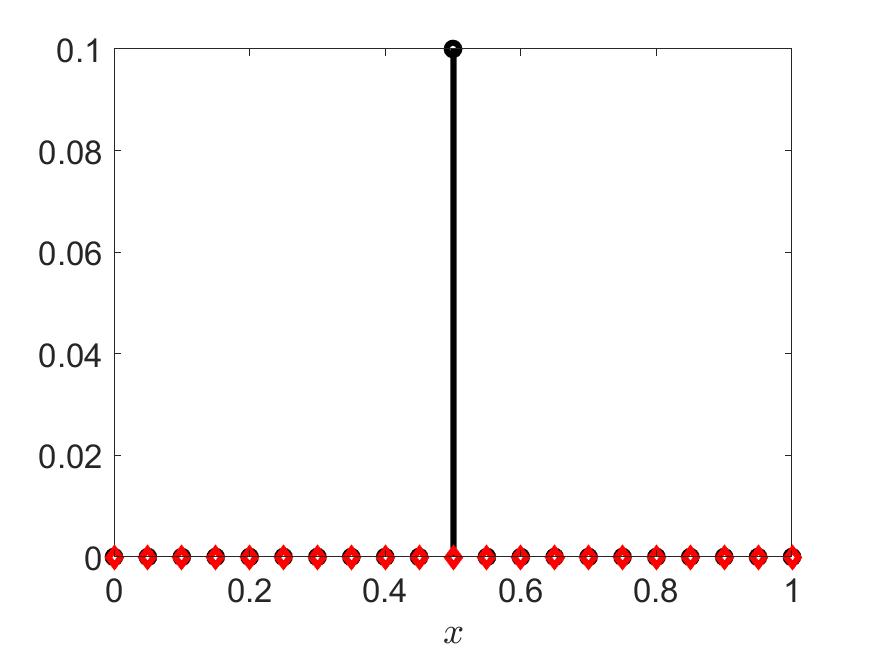}}
			&\raisebox{-1\height}{\includegraphics[width=\imgwidth]{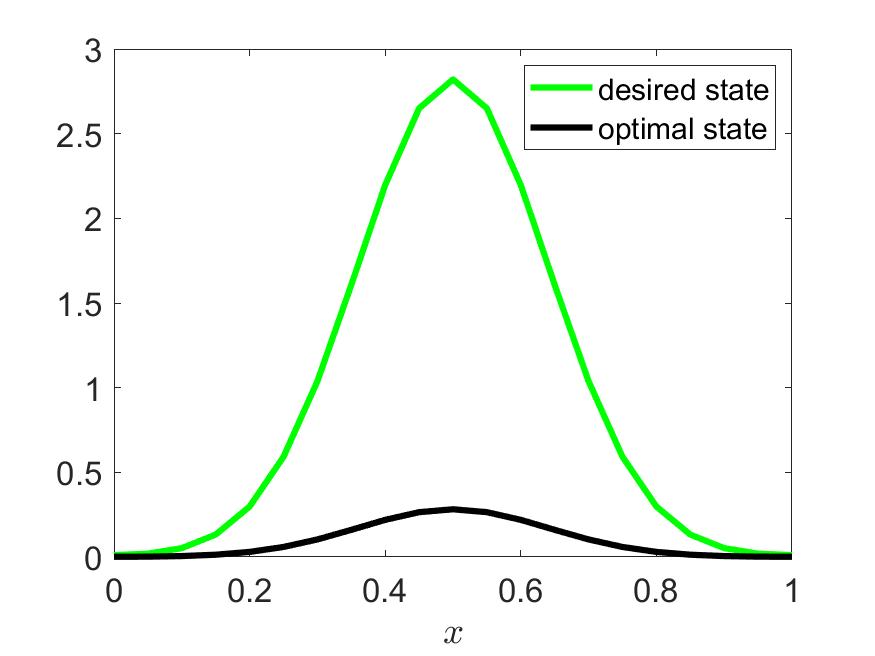}}
			&\raisebox{-1\height}{\includegraphics[width=\imgwidth]{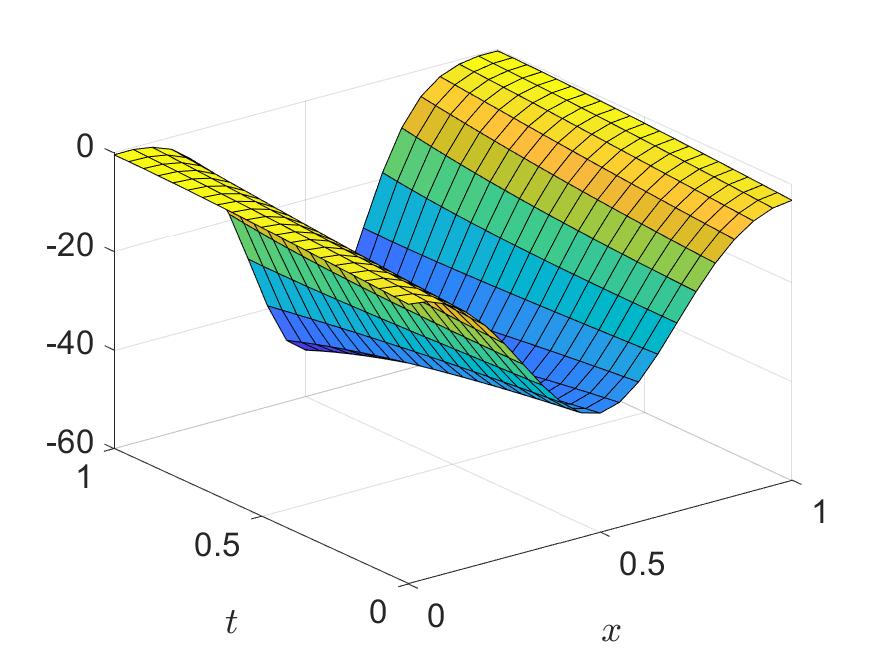}}
			&\raisebox{-1\height}{\includegraphics[width=\imgwidth]{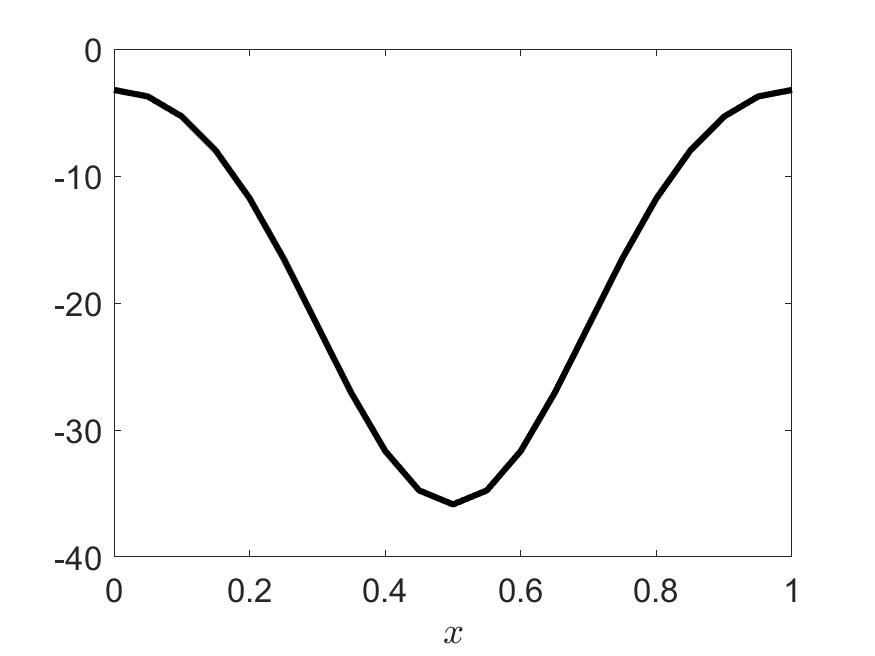}}
			\\
			\hline
			\raisebox{-1\height}{\includegraphics[width=\imgwidth]{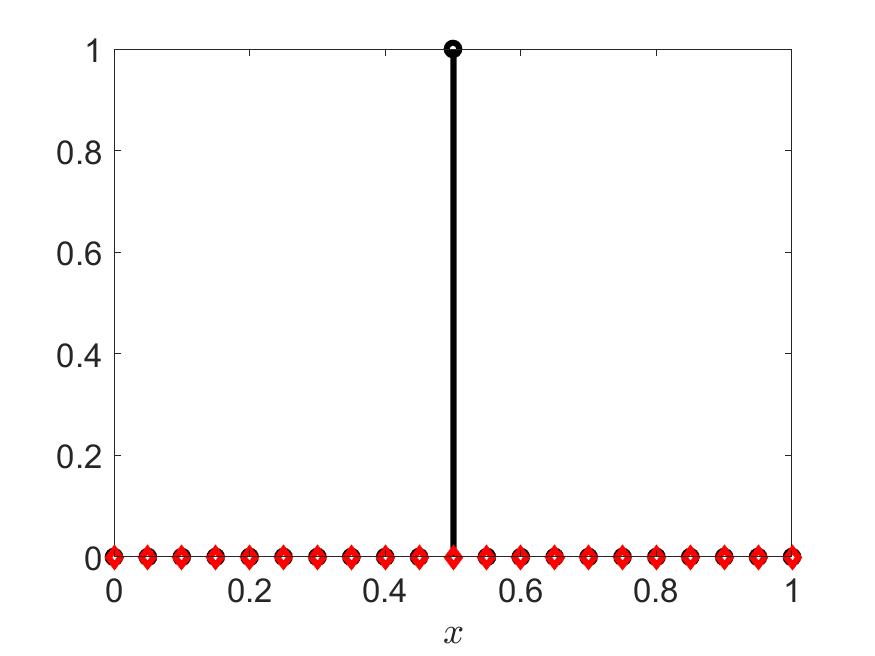}}
			&\raisebox{-1\height}{\includegraphics[width=\imgwidth]{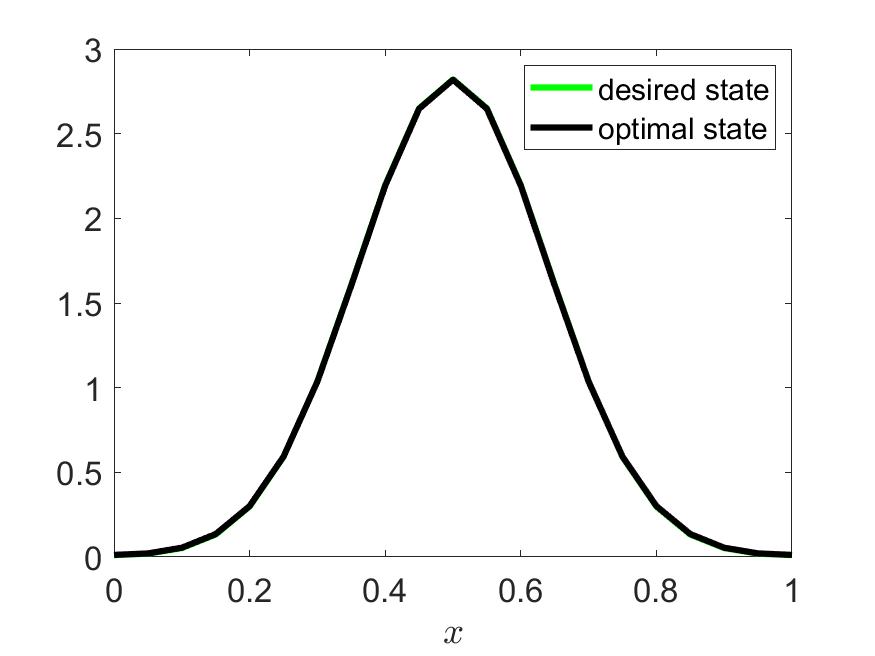}}
			&\raisebox{-1\height}{\includegraphics[width=\imgwidth]{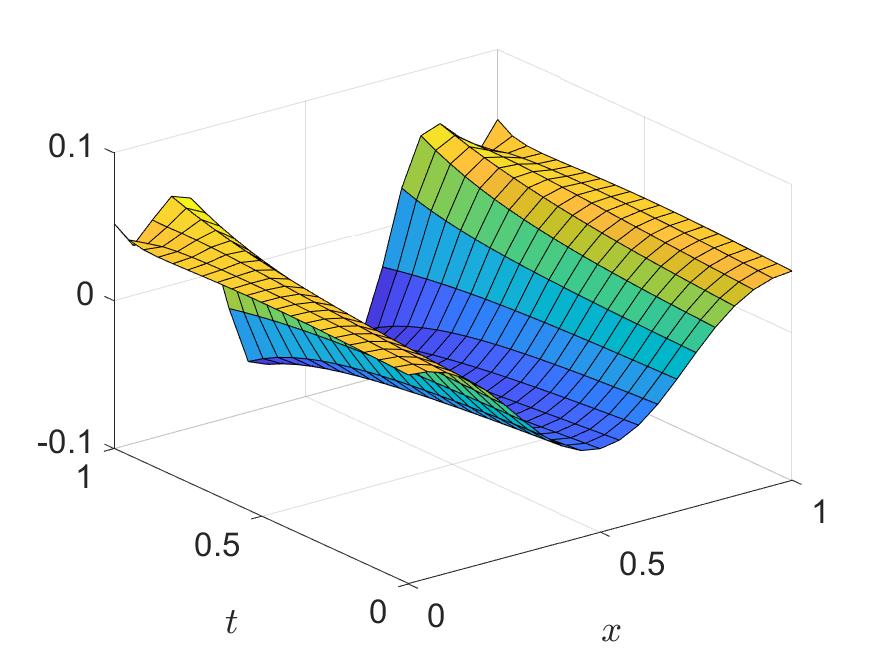}}
			&\raisebox{-1\height}{\includegraphics[width=\imgwidth]{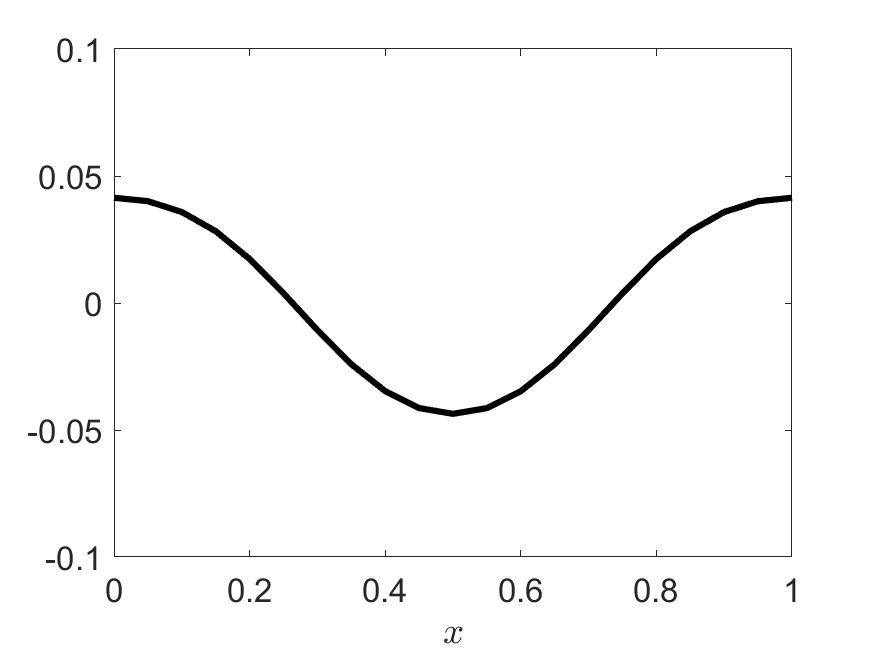}}
			\\
			\hline
		\end{tabular}
	\end{center}
	\caption{Solutions for $\alpha = 0.1$ (top) and $\alpha = 1$ (bottom): from left to right: optimal control $\bar{u}= \bar{u}^+ - \bar{u}^-$ (solved with the semismooth Newton method), associated optimal state $\bar{y}$, associated adjoint $\bar{\varphi}$ on the whole space-time domain $Q$, associated adjoint $\bar{\varphi}$ at $t=0$. Terminated after 11 and 64 Newton steps, respectively.}
\end{figure}

%The third case we investigate is $\alpha = 2$ (see Figure 6). This $\alpha$ is bigger than the total variation of the true control and we observe $\bar{u}^+(\bar{\Omega}) = 1.5, \bar{u}^-(\bar{\Omega}) = 0.5$. We observe $\bar y (T) \approx y_d$ (with an error of size $10^{-7}$) and $\bar \varphi \approx 0 \in Q$. 
%Since we allow positive and negative coefficients, the desired state can be reached on the coarse grid - different to the case of only positive sources, but as a payoff the sparsity of the optimal control is lost.  As required, the complementarity condition has been fulfilled, i.e. $u_i^+ u_i^- = 0$ holds for all $i$. This however, comes at the cost of many iterations (967 Newton steps), since a big constant $\gamma$ causes bad condition of our problem.  

The third case we investigate is $\alpha = 2$ (see Figure 6). This $\alpha$ is bigger than the total variation of the true control and we observe $\bar{u}^+(\bar{\Omega}) = 1.5, \bar{u}^-(\bar{\Omega}) = 0.5 $. Furthermore $\bar y (T) \approx y_d$ (with an error of size $10^{-8}$) and $\bar \varphi \approx 0 \in Q$. Since we allow positive and negative coefficients, the desired state can be reached on the coarse grid - different to the case of only positive sources, but as a payoff the sparsity of the optimal control is lost.  As required, the complementarity condition has been fulfilled, i.e. $u_i^+ u_i^- = 0$ holds for all $i$. This however, comes at the cost of many iterations, since a big constant $\gamma$ causes bad condition of our problem. As a remedy we implemented a $\gamma$-homotopy like e.g. in \cite[Section 6]{CasasClasonKunisch}, where we start with $\gamma = 1$, solve the problem using the semismooth Newton method and use this solution as a starting point for an increased $\gamma$ until a solution satisfies the constraints. With a fixed $\gamma =70$ we need almost 1000 Newton steps, with the the $\gamma$-homotopy, which terminates at $\gamma =64$ in this setting, it takes 183 Newton steps.

As a comparison to the problem with only positive sources, we also solve the problem with the same reachable desired state as in Figure 4, i.e. the projection of the original desired state onto the coarse grid. Here, we also observe $\bar y (T) \approx y_d$ (with an error of size $10^{-12}$) and $\bar \varphi \approx 0 \in Q$. Furthermore the optimal control is sparse with $\supp(\bar u ^+) = \left\{ 0.5\right\}$, only consists of a positive part and its total variation is $\bar{u}^+(\bar{\Omega}) =1 < \alpha$. We fix $\gamma = 70$ and need 56 Newton steps in this case.

\begin{figure}[ht]
	\begin{center}
		\setlength{\tabcolsep}{1pt}
		\begin{tabular}{|c|c|c|c|}
			\hline	
			\raisebox{-1\height}{\includegraphics[width=\imgwidth]{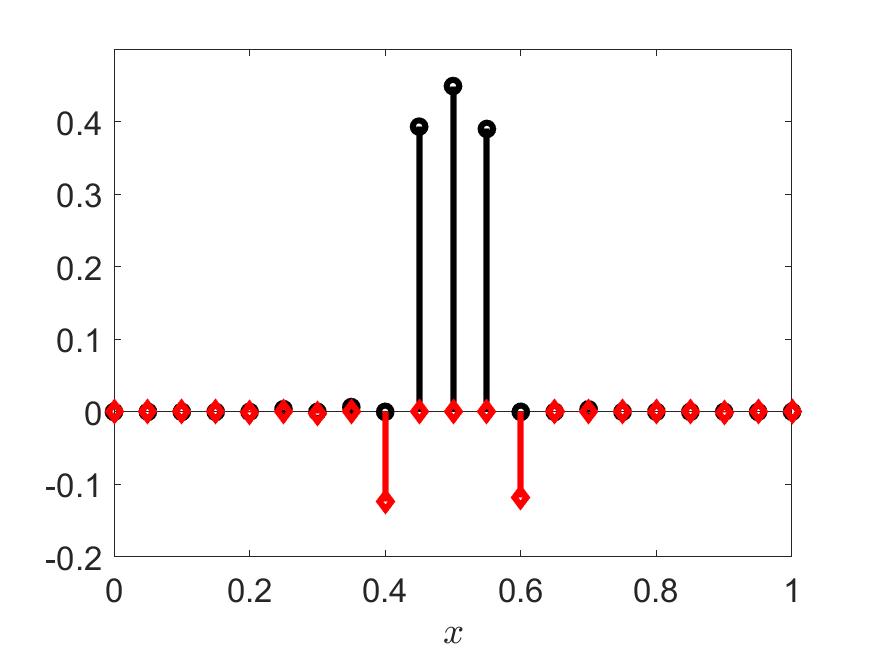}}
			&\raisebox{-1\height}{\includegraphics[width=\imgwidth]{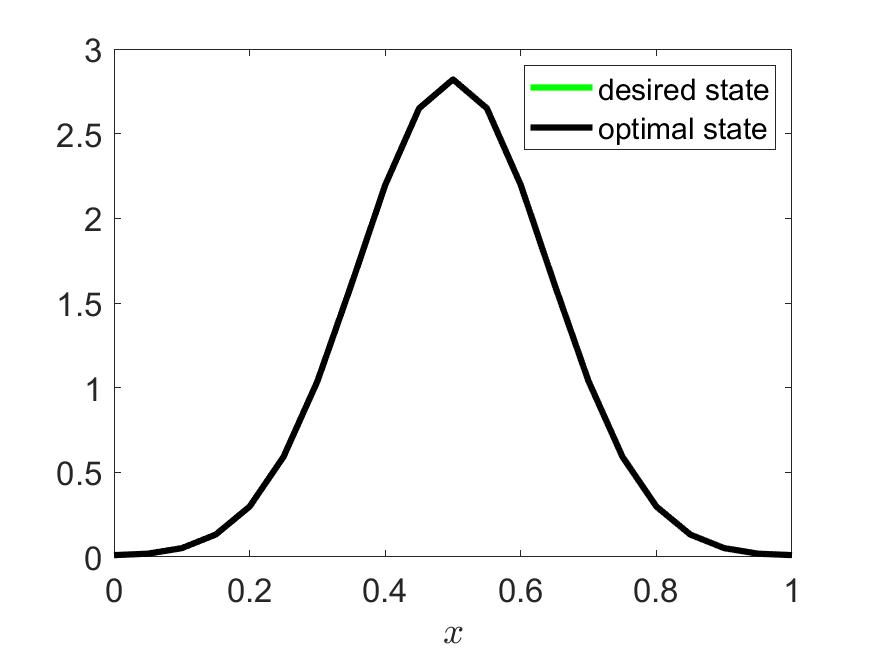}}
			&\raisebox{-1\height}{\includegraphics[width=\imgwidth]{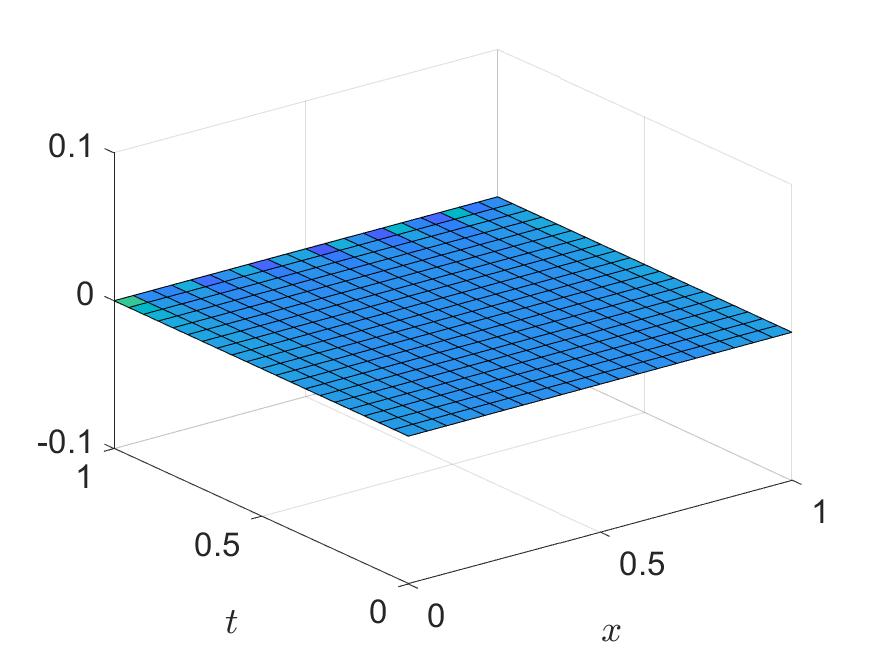}}
			&\raisebox{-1\height}{\includegraphics[width=\imgwidth]{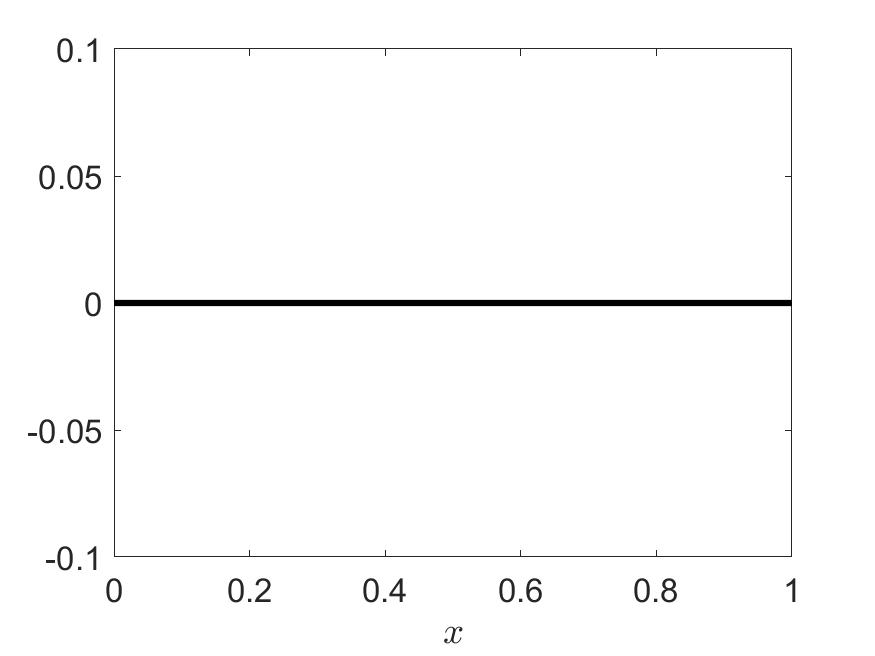}}
			\\
			\hline
			\raisebox{-1\height}{\includegraphics[width=\imgwidth]{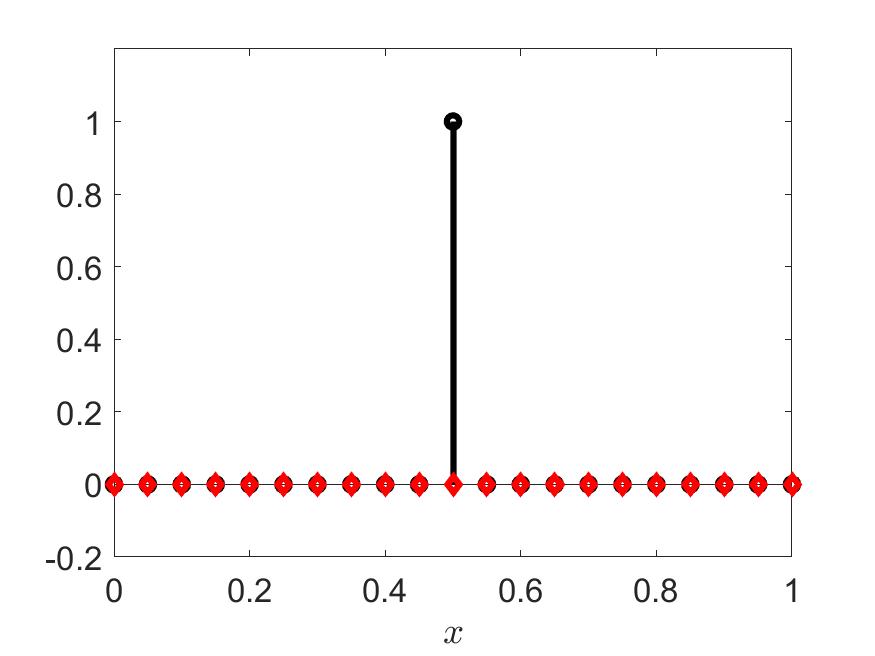}}
			&\raisebox{-1\height}{\includegraphics[width=\imgwidth]{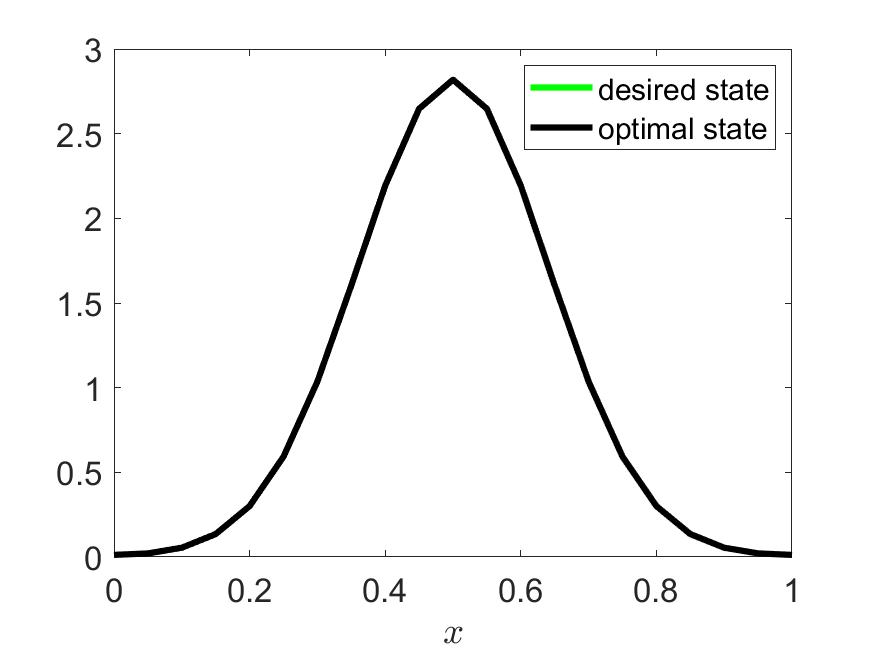}}
			&\raisebox{-1\height}{\includegraphics[width=\imgwidth]{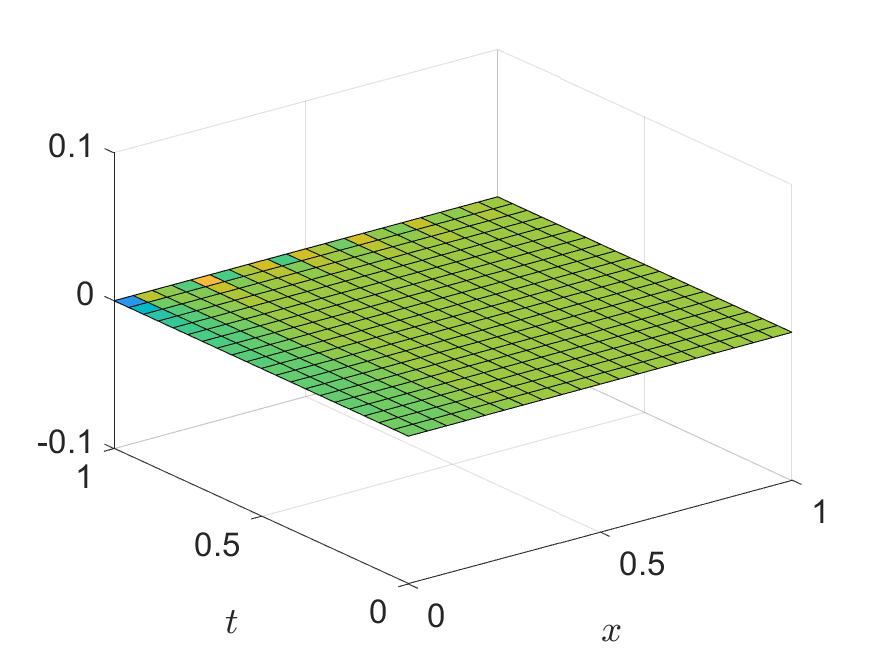}}
			&\raisebox{-1\height}{\includegraphics[width=\imgwidth]{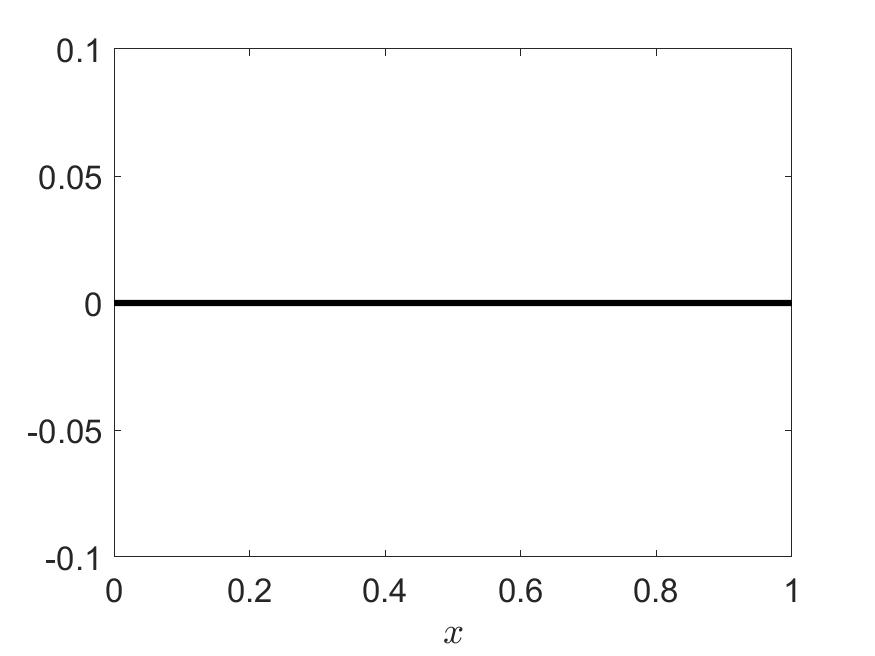}}
			\\
			\hline
		\end{tabular}
	\end{center}
	\caption{Solutions for $\alpha = 2$ with original desired state (top) and reachable desired state (bottom): from left to right: optimal control $\bar{u} = \bar{u}^+ - \bar{u}^-$ (solved with the semismooth Newton method), associated optimal state $\bar{y}$, associated adjoint $\bar{\varphi}$ on the whole space-time domain $Q$, associated adjoint $\bar{\varphi}$ at $t=0$. Terminated after 183 and 56 Newton steps, respectively.}
\end{figure}

Furthermore, we solve this case on a finer mesh (40 $\times$ 40) to compare the behavior of solutions (see Figure 7). We observe a higher iteration count: 255 Newton steps when employing a $\gamma-$homotopy, which terminates at $\gamma=64$. In fact for any example, which we solved on two different meshes the solver needed more iterations on the finer grid. This is caused by the growing condition number of the PDE solver, since it is a mapping from an initial measure control to the state at final time. We can also see a difference in the optimal controls in Figure 6 top and Figure 7, although comparable associated optimal state and adjoint are achieved. 

\begin{figure}[ht]
	\begin{center}
		\setlength{\tabcolsep}{1pt}
		\begin{tabular}{|c|c|c|c|}
			\hline
			\raisebox{-1\height}{\includegraphics[width=\imgwidth]{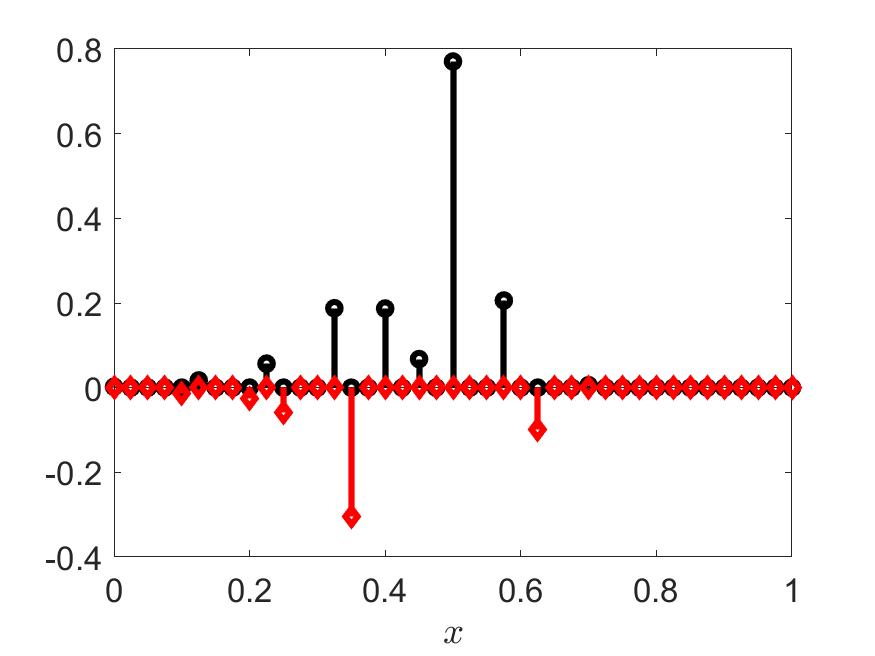}}
			&\raisebox{-1\height}{\includegraphics[width=\imgwidth]{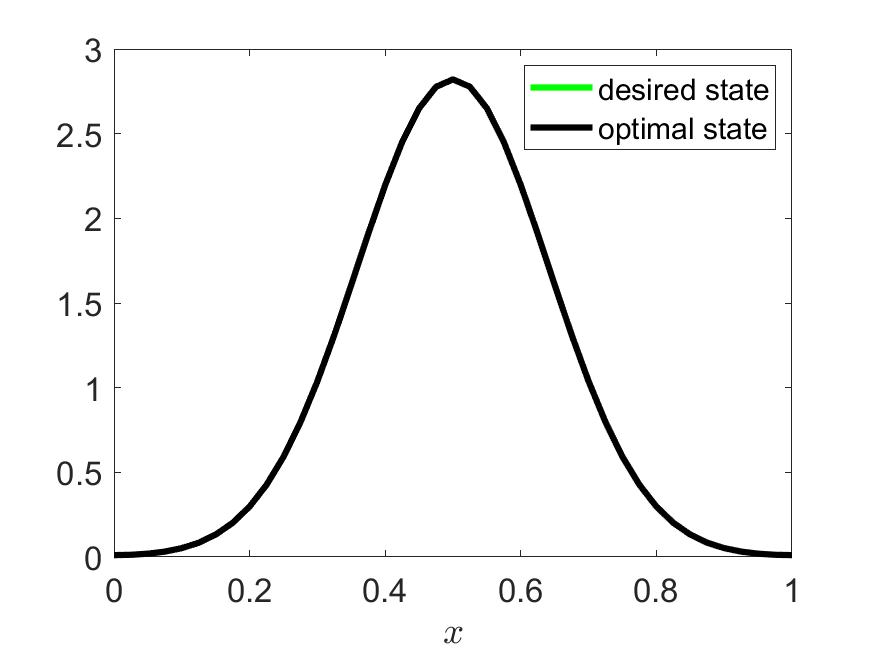}}
			&\raisebox{-1\height}{\includegraphics[width=\imgwidth]{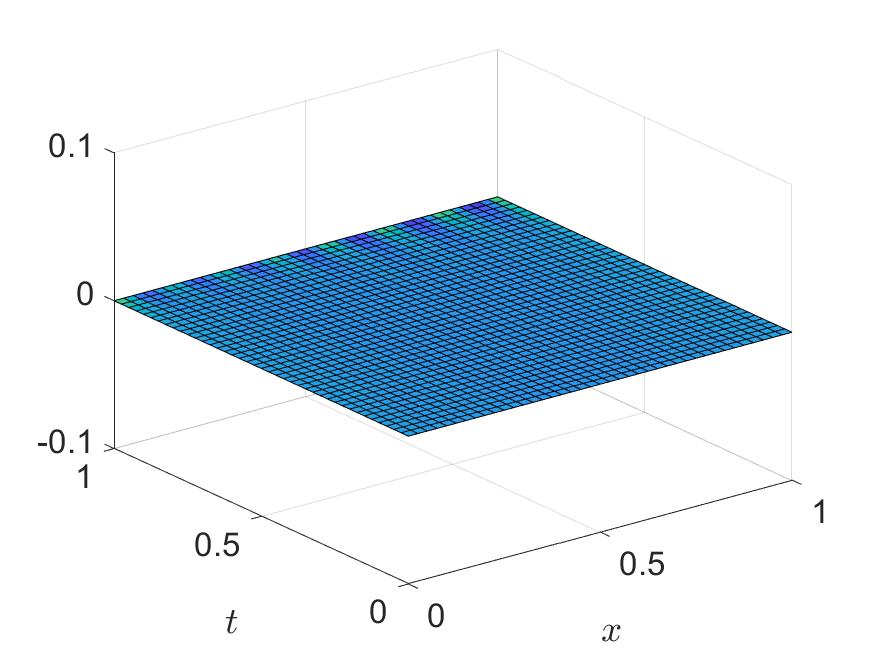}}
			&\raisebox{-1\height}{\includegraphics[width=\imgwidth]{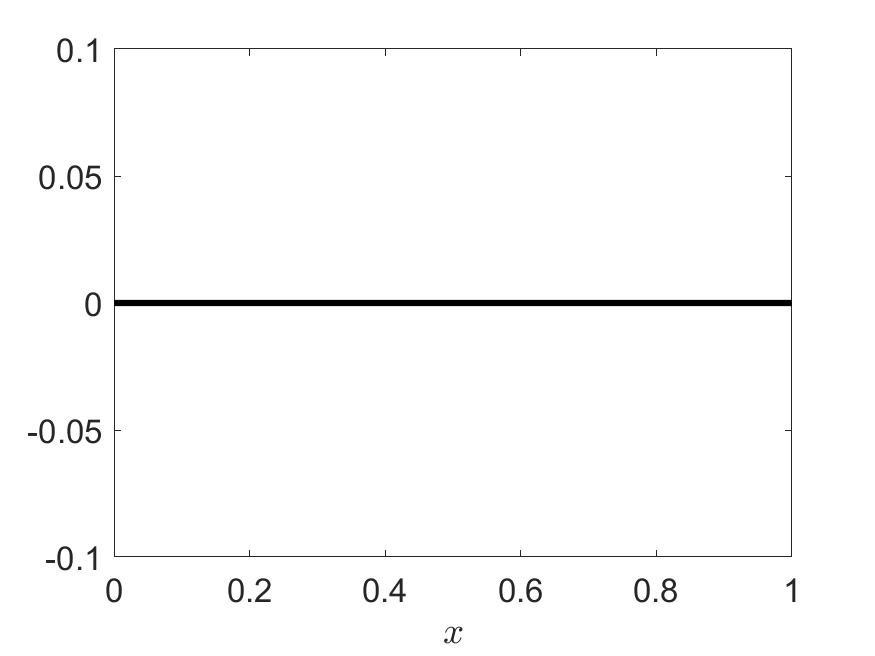}}
			\\
			\hline
		\end{tabular}
	\end{center}
	\caption{Solution for $\alpha = 2$ with original desired state on a 40 $\times$ 40 grid: from left to right: optimal control $\bar{u} = \bar{u}^+ - \bar{u}^-$ (solved with the semismooth Newton method), associated optimal state $\bar{y}$, associated adjoint $\bar{\varphi}$ on the whole space-time domain $Q$, associated adjoint $\bar{\varphi}$ at $t=0$. Terminated after 255 Newton steps.}
\end{figure}

The second example we want to look at is a measure consisting of a positive and a negative part. To generate a desired state $y_d$, we choose $u_{\operatorname{true}} = \delta_{0.3} - 0.5 \cdot \delta_{0.8}$ and $f \equiv 0$ , solve the state equation on a very fine grid ($1000 \times 1000$) and take the evaluation of the result in $t = T$ on the current grid $\Omega_h$ as desired state $y_d$ (see Figure 8). 

\begin{figure}[ht]
	\begin{center}
		\setlength{\tabcolsep}{1pt}
		\begin{tabular}{|c|c|c|}
			\hline
			\raisebox{-1\height}{ \includegraphics[width=\imgwidth]{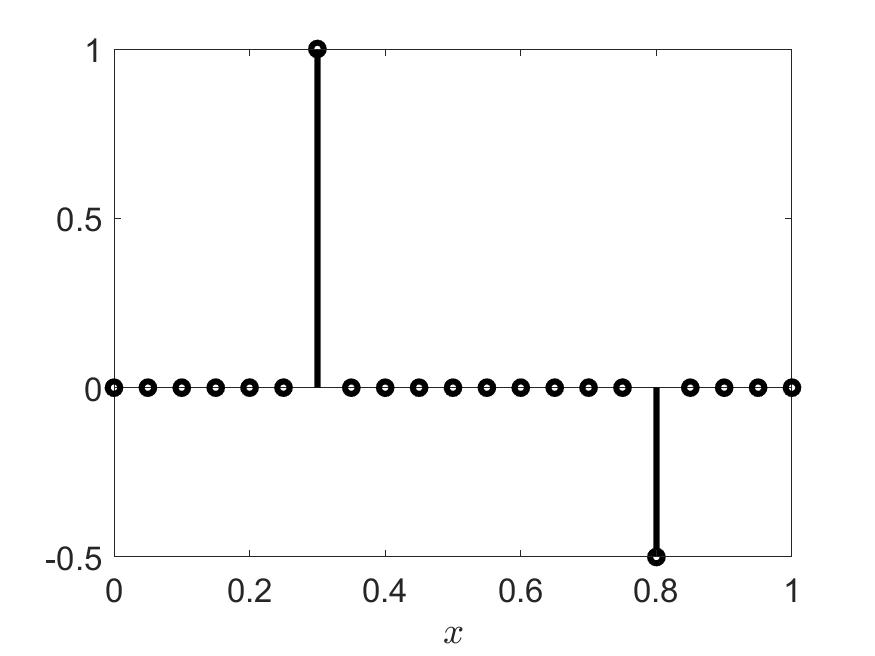}}
			&\raisebox{-1\height}{ \includegraphics[width=\imgwidth]{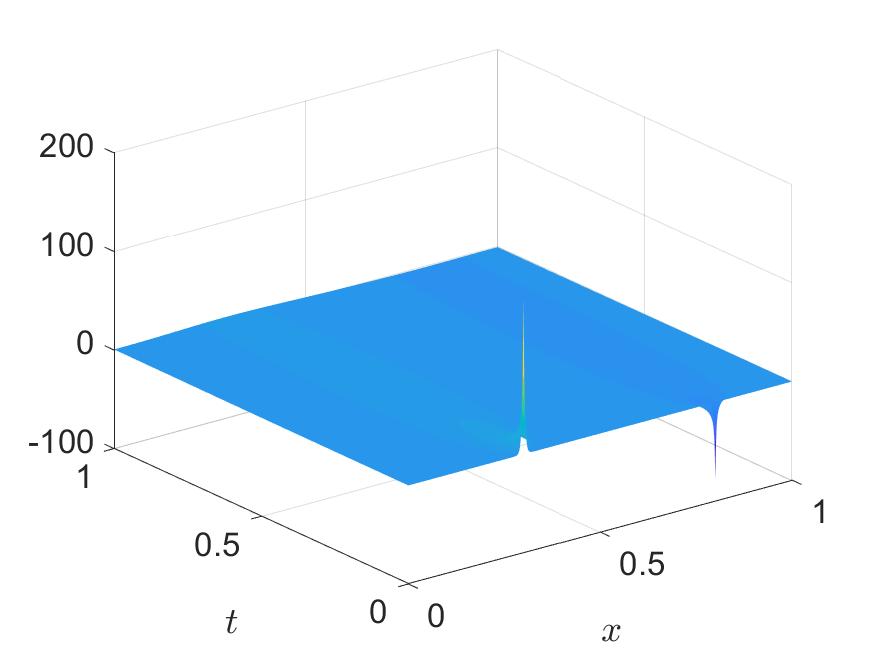}}
			&\raisebox{-1\height}{ \includegraphics[width=\imgwidth]{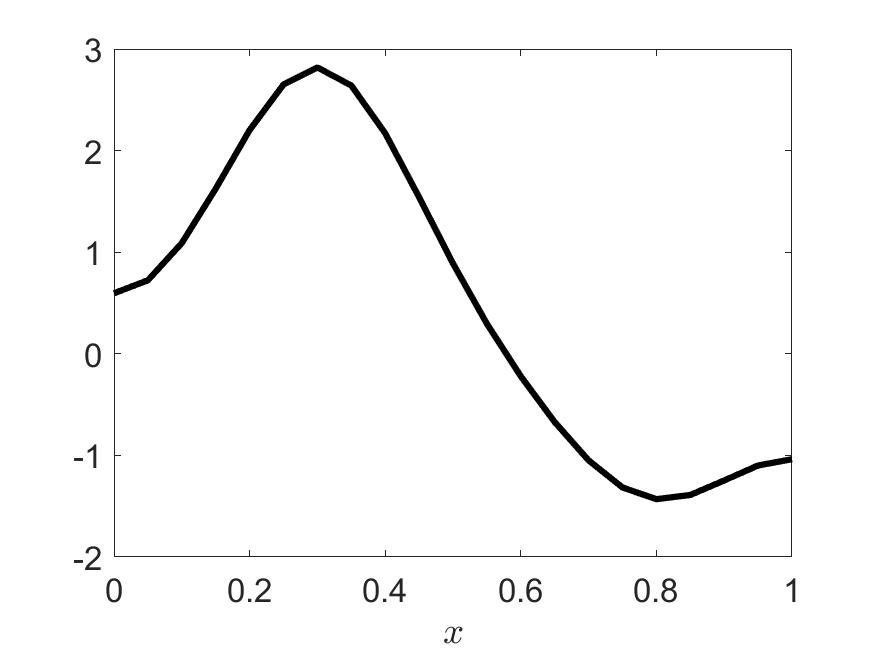}}
			\\
			\hline
		\end{tabular}
	\end{center}
	\caption{From left to right: true solution $u_{\operatorname{true}}$, associated true state $y_{\operatorname{true}}$ in $Q =[0,1] \times [0,1]$ and desired state $y_d = y_{\operatorname{true}}(T)$}	
	\label{fig:1}
\end{figure}

The first case we investigate is $\alpha = 0.15$ (see Figure 9 top). This $\alpha$ is smaller than the total variation of the true control and we observe $\bar{u}^+(\bar{\Omega}) = 0.15, \bar{u}^-(\bar{\Omega}) = 1.2929 \cdot 10^{-16}$. 
The second case we investigate is $\alpha = 1.5$ (see Figure 9 bottom). This $\alpha$ is equal to the total variation of the true control and we observe $\bar{u}^+(\bar{\Omega}) = 1.0001, \bar{u}^-(\bar{\Omega}) = 0.4999$. For both cases displayed in Figure 8 we fix $\gamma = 70$. 

\begin{figure}[ht]
	\begin{center}
		\setlength{\tabcolsep}{1pt}
		\begin{tabular}{|c|c|c|c|}
			\hline	
			\raisebox{-1\height}{\includegraphics[width=\imgwidth]{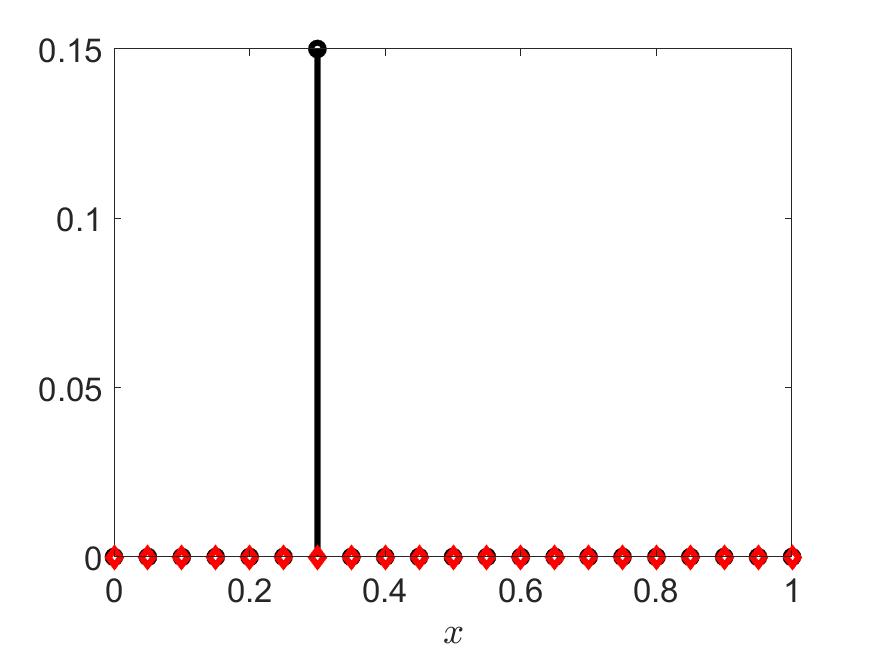}}
			&\raisebox{-1\height}{\includegraphics[width=\imgwidth]{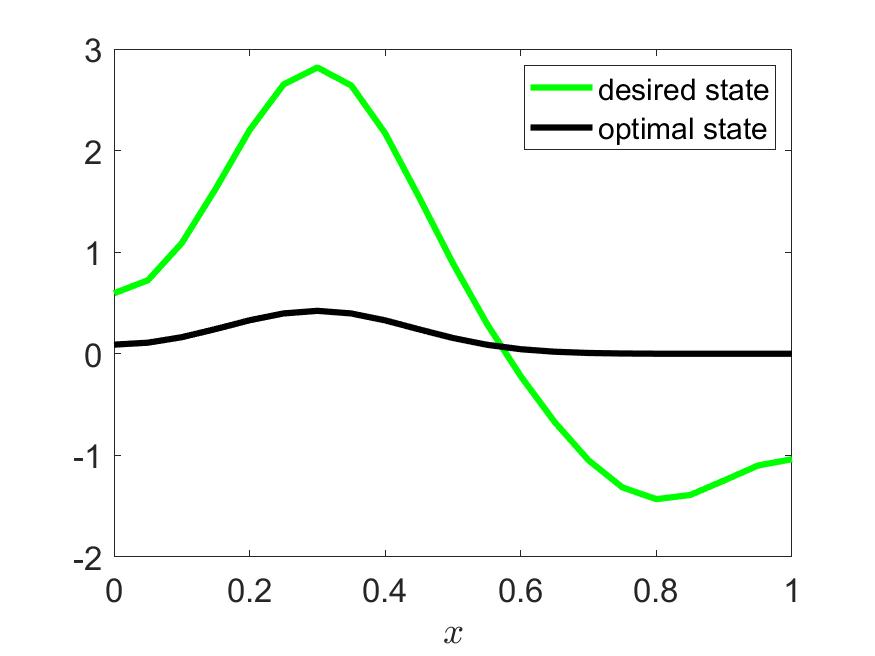}}
			&\raisebox{-1\height}{\includegraphics[width=\imgwidth]{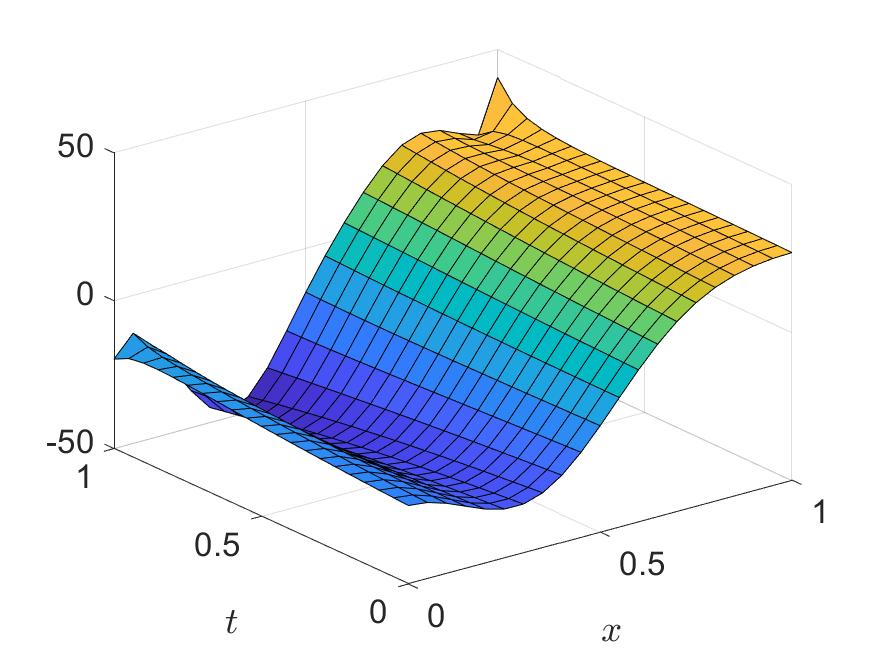}}
			&\raisebox{-1\height}{\includegraphics[width=\imgwidth]{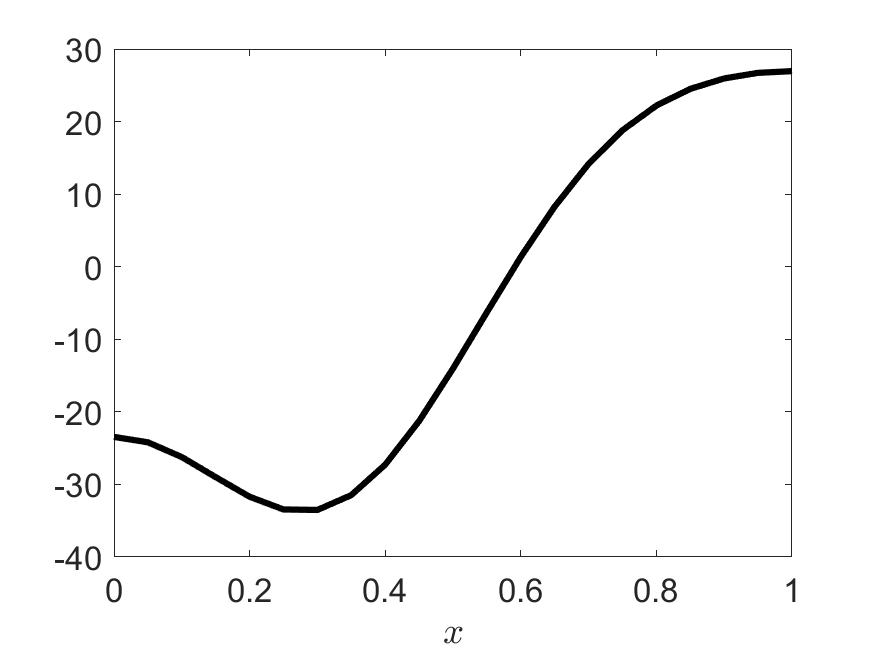}}
			\\
			\hline
			\raisebox{-1\height}{\includegraphics[width=\imgwidth]{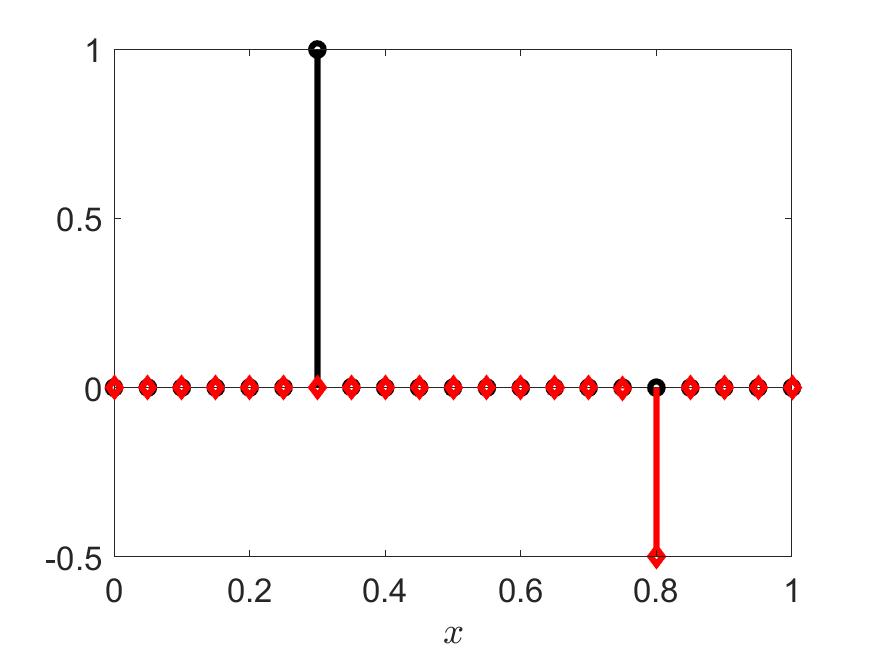}}
			&\raisebox{-1\height}{\includegraphics[width=\imgwidth]{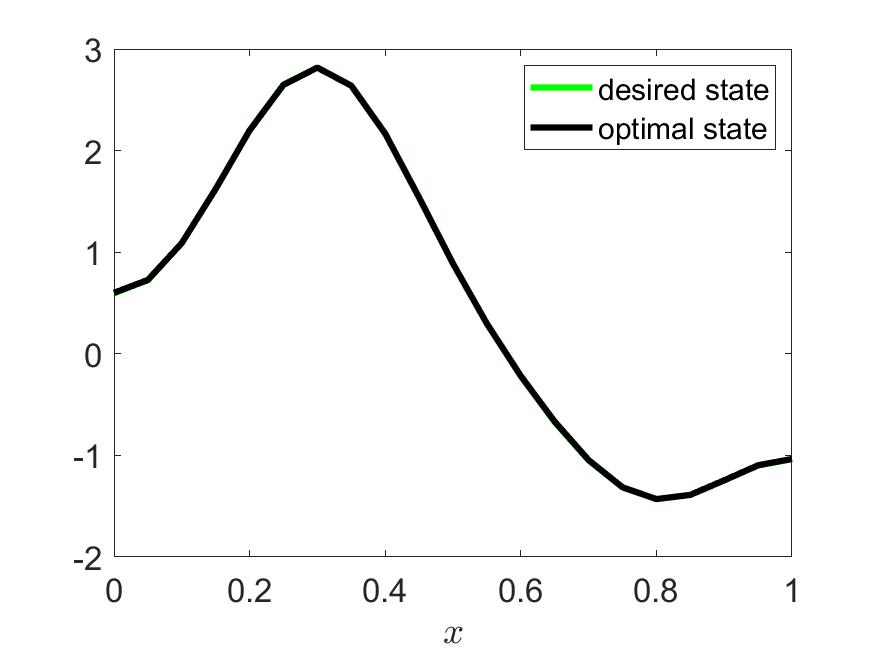}}
			&\raisebox{-1\height}{\includegraphics[width=\imgwidth]{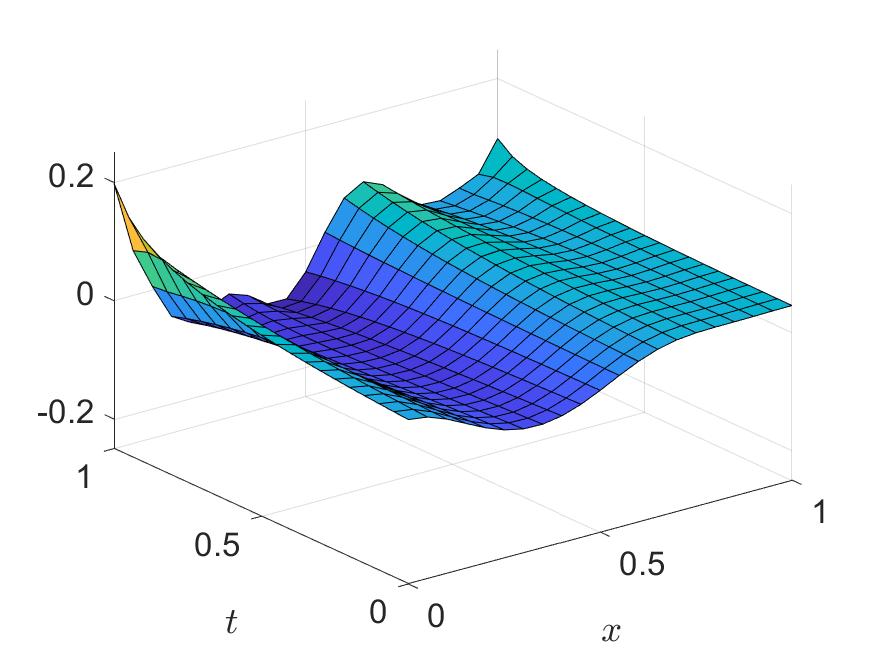}}
			&\raisebox{-1\height}{\includegraphics[width=\imgwidth]{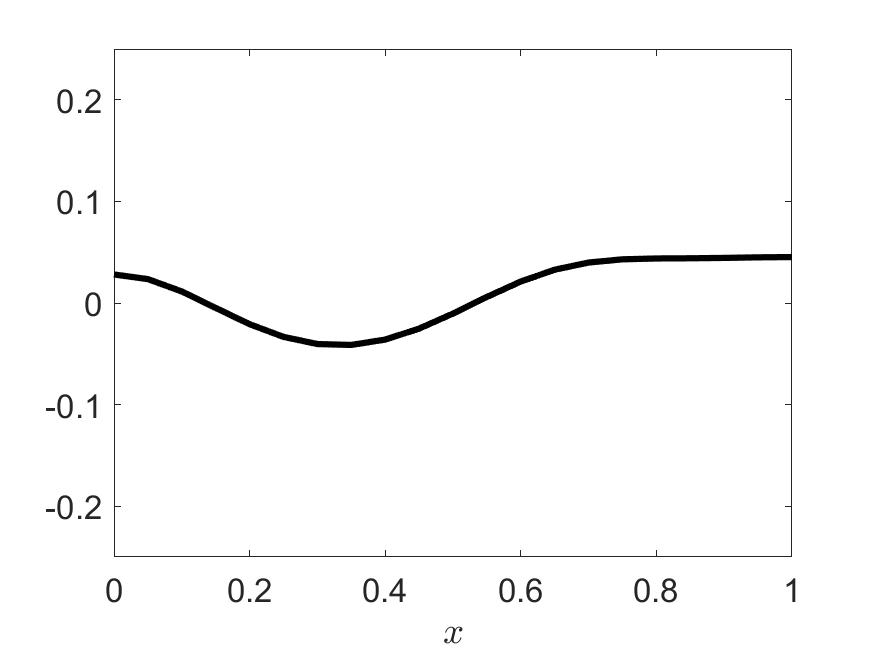}}
			\\
			\hline
		\end{tabular}
	\end{center}
	\caption{Solutions for $\alpha = 0.15$ (top) and $\alpha = 1.5$ (bottom): from left to right: optimal control $\bar{u} = \bar{u}^+ - \bar{u}^-$ (solved with the semismooth Newton method), associated optimal state $\bar{y}$, associated adjoint $\bar{\varphi}$ on the whole space-time domain $Q$, associated adjoint $\bar{\varphi}$ at $t=0$. Terminated after 29 and 44 Newton steps, respectively.}
\end{figure}

Again, we investigate as third case a setting, where $\alpha = 3 > 1.5 = \|u_{\operatorname{true}}\|_{\mathcal{M}(\bar \Omega)}$ (see Figure 10). We observe $\bar{u}^+(\bar{\Omega}) = 1.75, \bar{u}^-(\bar{\Omega}) = 1.25$. Here, $\bar y (T) \approx y_d$ (with an error of size $10^{-7}$) and $\bar \varphi \approx 0 \in Q$ hold. The optimal control fulfills the complementarity condition, but we can not observe the same sparsity that was inherited by $u_{\operatorname{true}}$. For this case we have to raise the fix $\gamma$ to 100 and the computation took over 1700 Newton steps. Hence we employ a $\gamma$-homotopy again, which terminates at $\gamma =64$ in this setting, and only need 137 Newton steps.

For comparison we project the desired state onto the coarse grid, such that it becomes reachable and then solve the problem again. Now we observe $\bar{u}^+(\bar{\Omega}) = 1, \bar{u}^-(\bar{\Omega}) = 0.5, \supp(\bar u^+) = \left\{0.3\right\}, \supp(\bar u ^-) = \left\{0.8\right\}$, which are exactly the properties of $u_{\operatorname{true}}$. Furthermore we see $\bar y (T) \approx y_d$ (with an error of size $10^{-14}$) and $\bar \varphi \approx 0 \in Q$. We observe a reduction of Newton steps needed - the computation took 20 Newton steps with fixed $\gamma = 100$.

\begin{figure}[ht]
	\begin{center}
		\setlength{\tabcolsep}{1pt}
		\begin{tabular}{|c|c|c|c|}
			\hline	
			\raisebox{-1\height}{\includegraphics[width=\imgwidth]{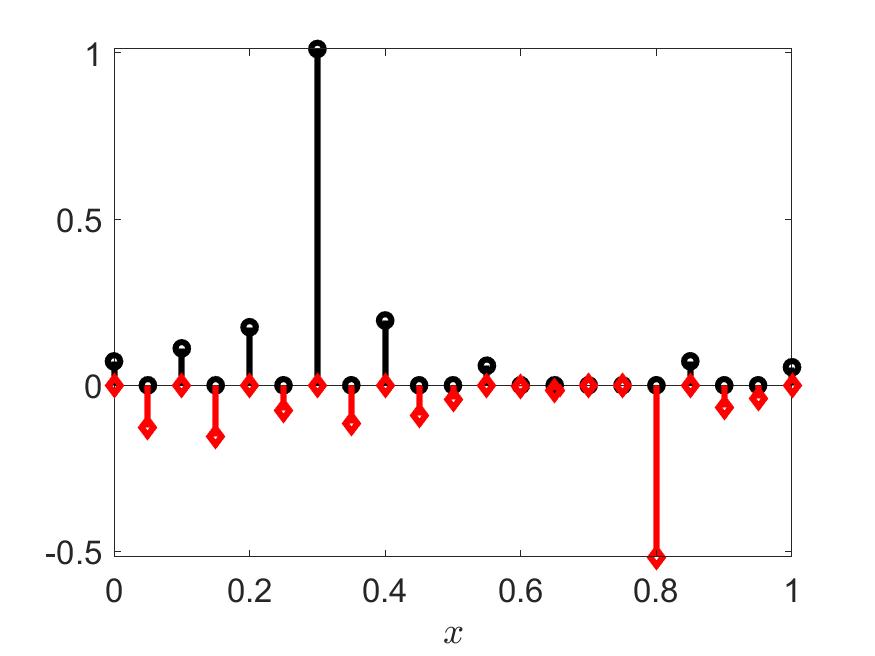}}
			&\raisebox{-1\height}{\includegraphics[width=\imgwidth]{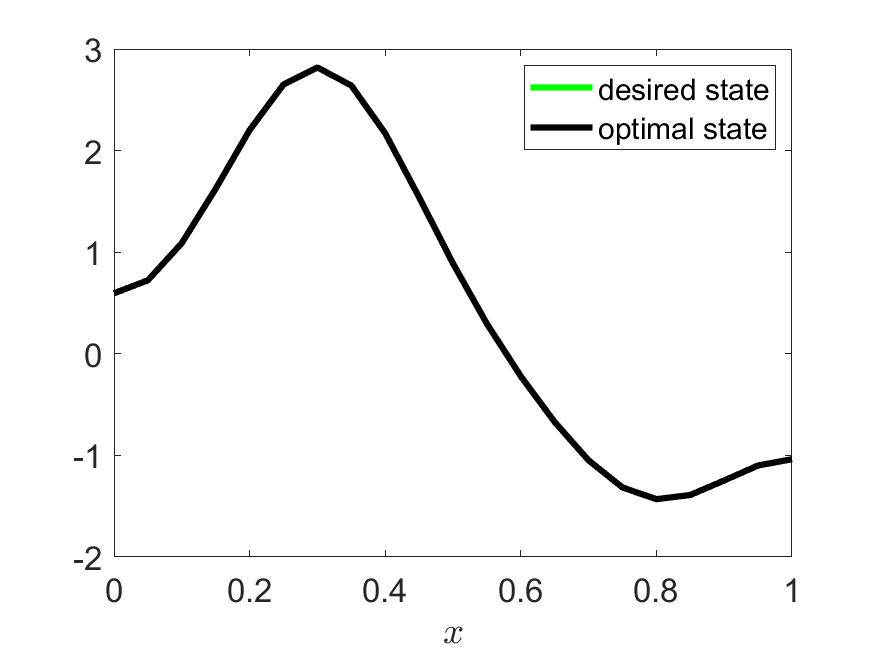}}
			&\raisebox{-1\height}{\includegraphics[width=\imgwidth]{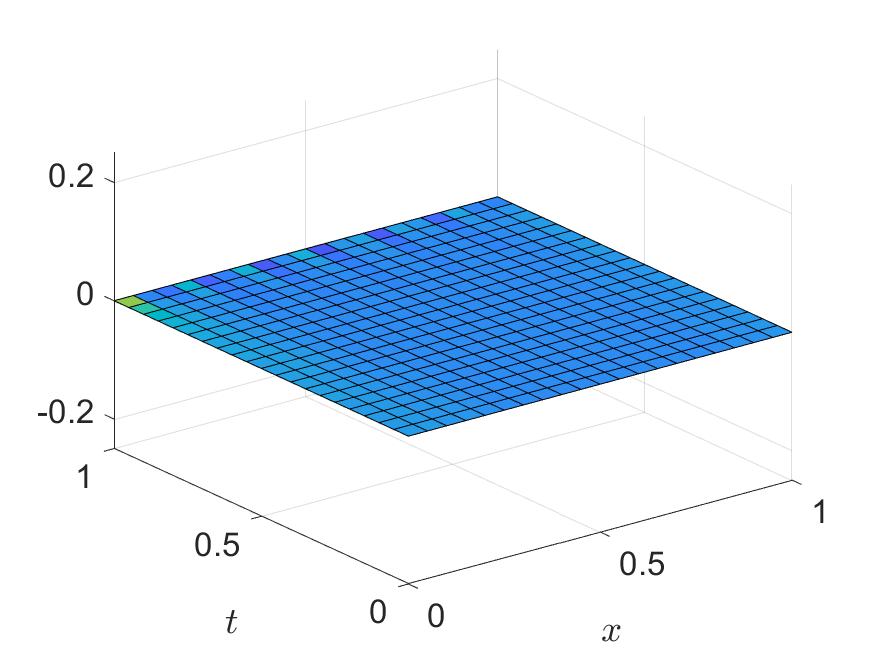}}
			&\raisebox{-1\height}{\includegraphics[width=\imgwidth]{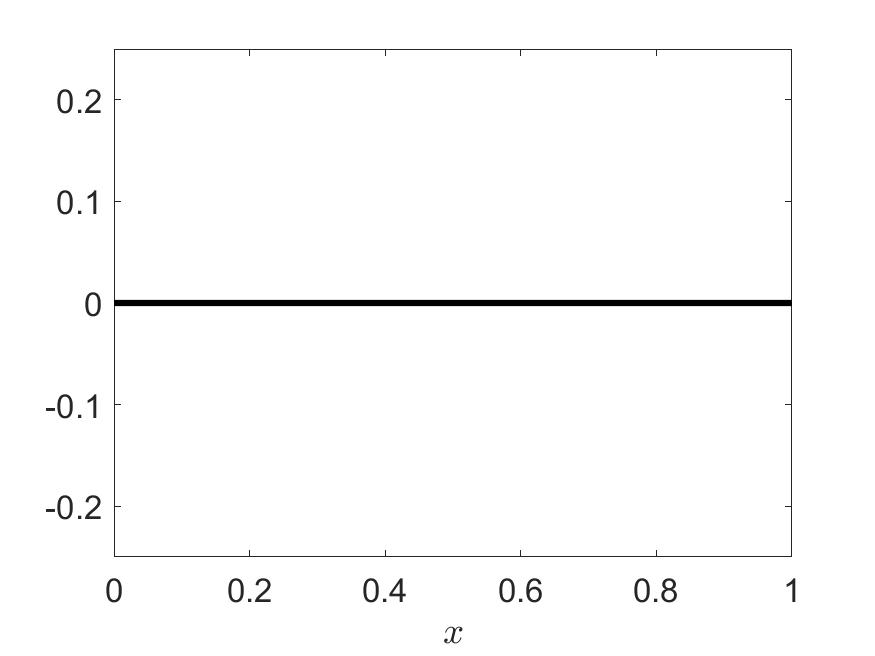}}
			\\
			\hline
			\raisebox{-1\height}{\includegraphics[width=\imgwidth]{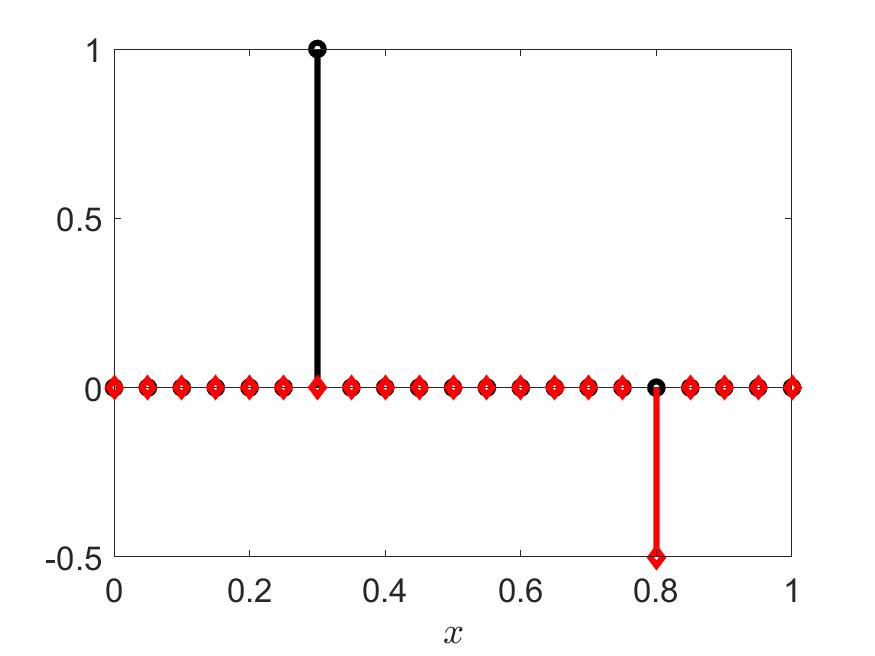}}
			&\raisebox{-1\height}{\includegraphics[width=\imgwidth]{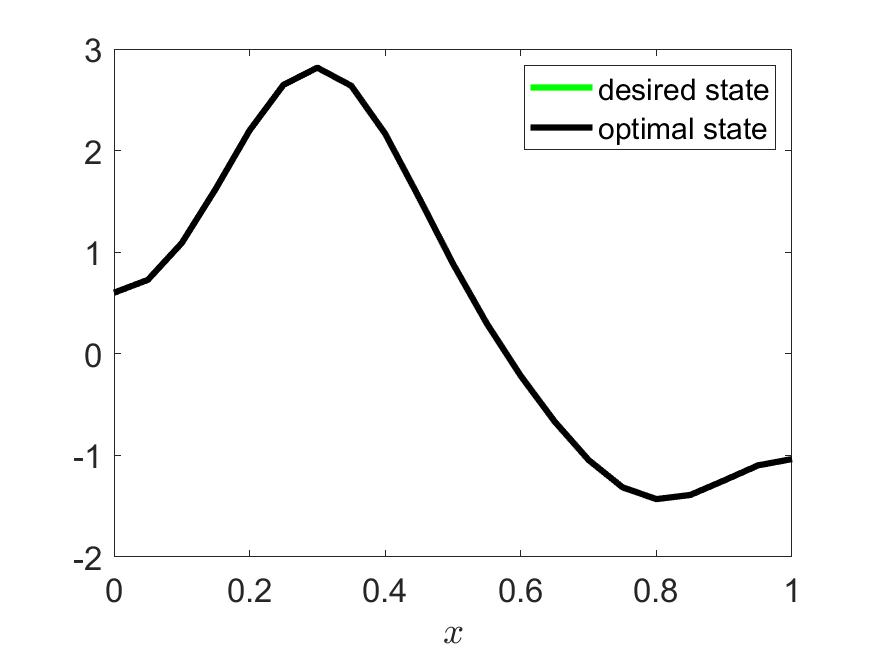}}
			&\raisebox{-1\height}{\includegraphics[width=\imgwidth]{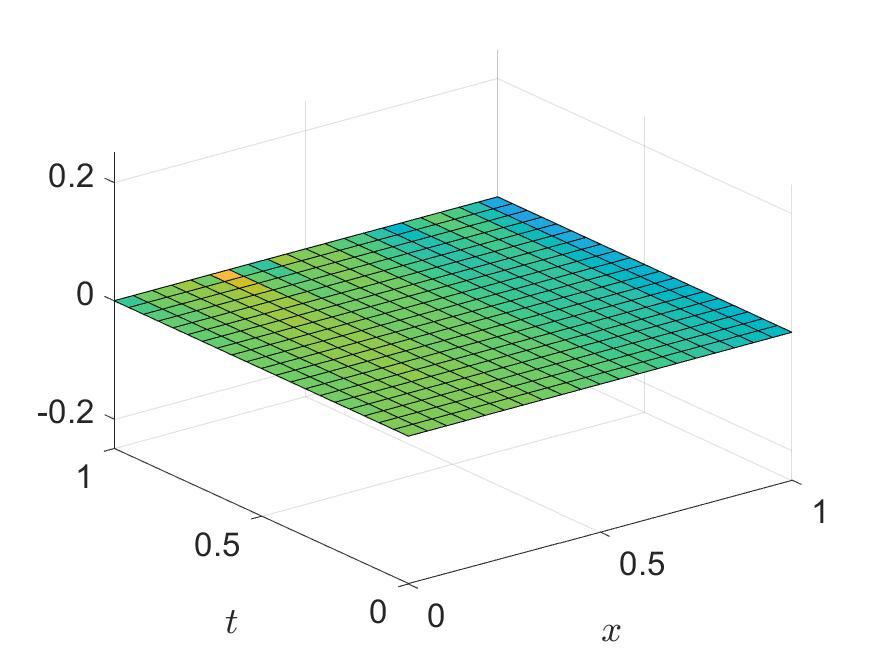}}
			&\raisebox{-1\height}{\includegraphics[width=\imgwidth]{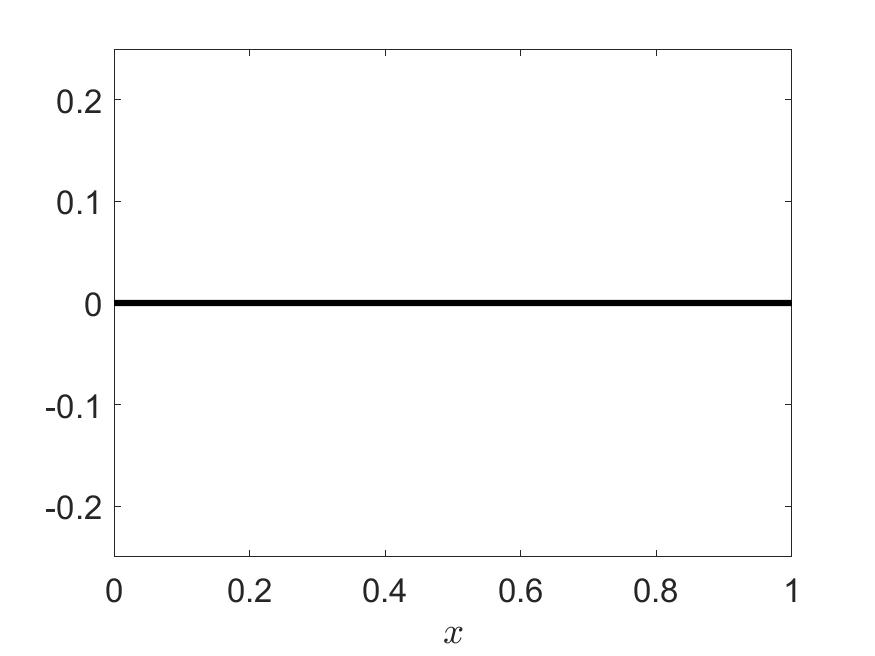}}
			\\
			\hline
		\end{tabular}
	\end{center}
	\caption{Solutions for $\alpha = 3$ with original desired state (top) and reachable desired state (bottom): from left to right: optimal control $\bar{u} = \bar{u}^+ - \bar{u}^-$ (solved with the semismooth Newton method), associated optimal state $\bar{y}$, associated adjoint $\bar{\varphi}$ on the whole space-time domain $Q$, associated adjoint $\bar{\varphi}$ at $t=0$. Terminated after 137 and 20 Newton steps.}
\end{figure}

	\textbf{Acknowledgment:} We acknowledge the fruitful discussions with both Eduardo Casas and Karl Kunisch, which inspired this work.

%	% Bibliography example using bibtex
%	\bibliographystyle{plain}
%	\bibliography{C:/Users/herberg/Documents/Literature.bib}

\end{document}